\title{Sequential Gibbs Measures and Factor Maps}
\author{Giovane Ferreira\footnote{Ferreira is supported by CAPES, INCTMat and FAPEAL. This work is part of the PhD thesis at UFAL/UFBA.} \; and Krerley Oliveira \footnote{ Oliveira is partially supported by CNPq, CAPES, FAPEAL, INCTMAT and Foundation Louis D.}}
\newtheorem{theorem}{Theorem}
\newtheorem{cor}{Corollary}
\newtheorem{corollary}{Corollary}[section]
\newtheorem{proposition}{Proposition}[section]
\newtheorem{definition}{Definition}[section]
\newtheorem{remark}{Remark}[section]
\newtheorem{lemma}{Lemma}[section]
\newtheorem{example}{Example}[section]
\newcommand{\ud}{\underline}
\newcommand{\ov}{\overline}
\newcommand{\bb}{\mathbb}
\begin{document}
 \maketitle

\begin{abstract}
We define the notion of \emph{sequential Gibbs measures}, inspired by on the classical notion of Gibbs measures and recent examples from the study of non-uniform hyperbolic dynamics. Extending previous results of Kempton-Pollicott \cite{Kempton-Pollicot}  and Ugalde-Chazottes \cite{chazottes.ugalde2}, we show that the images of one block factor maps of a sequential Gibbs measure are also a sequential 
Gibbs measure, with the same sequence of Gibbs times. We obtain some estimates on the regularity of the potential of the image measure at almost every point.  
\end{abstract}

\section{Introduction}

Introduced in the early seventies in the realm of Dynamical Systems, Gibbs measures plays an important role in the understanding of the Ergodic Theory of hyperbolic  or expanding maps. These measures are equilibrium states of expanding maps for regular potentials. However, even the existence of such measures requires strong forms of regularity and hyperbolicity. This makes it difficult to make use such measures beyond the uniformly hyperbolic dynamics and creates the need of adapt and extend this concept in the non-uniform hyperbolic setting. 

Trying to understand the dynamics of intermittent maps and study its equilibrium states, Yuri generalized Gibbs measures defining the notion of weak Gibbs measure, where uniform control of the measure of dynamic balls at every point by a constant  is replaced by a  subexponential sequence of constants. Compare with Definition~\ref{def.yuri} and see more in \cite{yuri_1999,yuri2,yuri3}.

  More recently, with the rapid growth of the study and understanding of non-uniformly hyperbolic maps,  several works were carried out in the context of non-uniformly expanding dynamics dealing with  more general measures inspired by Gibbs measures, such as non-lacunary Gibbs measures.  See \cite{OV,VV,ramosviana}, just to refer  some of them. These measures are equilibrium states for some non-uniformly expanding maps and potentials and their Gibbs-like property is even weaker than analogous property of weak Gibbs measures in the sense of Yuri. The subexponential sequence of constants is replaced by a subexponential sequence of functions defined almost everywhere. It means that the non-uniform control at  every point is replaced by a non-uniform control at \emph{almost} every point, as was present in (\cite{OV}, Proposition 3.17).

Here, we study the behavior of Gibbs-like properties under factor maps.  To describe this problem precisely, let us consider two full shifts spaces $\Sigma_i = \{1,\dots,k_i\}^\mathbb{N}$, for $i=1,2$, and a surjective map $\pi:\{1,\dots,k_1\} \rightarrow \{1,\dots,k_2\}$ and   extend $\pi$ to a surjective map  $\Pi: \Sigma_1 \rightarrow \Sigma_2$, defining 
$$
\Pi(x_{1}x_{2}...)=\pi(x_{1})\pi(x_{2})....
$$

 This map is called an \emph{one block} factor map.

Given a continuous potential $\phi$ on a full shift, one can not expect that Gibbs measures exists or they are unique, as was shown by Hofbauer[\cite{Hofbauer}, page 230]. The regularity of the potential plays a important role in the existence and uniqueness of Gibbs and equilibrium measures.  To analyze it in detail,  consider the \emph{$n$-variation}  of $\phi$ defined by
$$
var_{n}(\phi)=\sup\{|\phi(\underline{z})-\phi(\underline{w})|: w_i=z_i, \text{ for } i=0,\dots,n-1\}.
$$
The uniform continuity of the function $\phi$ corresponds to $var_{n}(\phi)\rightarrow 0$ as $n\rightarrow\infty$ and the H\"older continuity of $\phi$ corresponds to the existence of  constants $C>0$ and $\theta \in (0,1)$ such that $var_{n}(\phi)< C\, \theta^{n}$, for $n \geq 1$.

Given a Gibbs measure $\mu$ on $\Sigma_1$ for a continuous function $\psi_1$, we consider its image $\nu:=\Pi_*\mu$ under $\Pi$. One interesting question  that arises in the Theory of Hidden Markov Chains is to show that $\nu$ is also a Gibbs measure for some continuous function $\psi_2$. In the case where $\mu$ is a Markov measure, sufficient conditions for $\nu$ to be a Gibbs
measure were given in \cite{chazottes.ugalde1,yoo2010}. The case where $\mu$ is a Gibbs measure and $\Sigma_1$ is a full shift,  we list some important recent  contributions: 

\begin{itemize}   

\item  (\cite{Kempton-Pollicot}, Theorem 1.1 and \cite{chazottes.ugalde2}, Theorem 3.1) If $\psi_1$ continuous, then $\nu$ is a Gibbs measure for some continuous potential $\psi_2$.

\item (\cite{redig-wang},Theorem 2) If $\psi_1$ H\"older continuous, then $\nu$ is a Gibbs measure for some H\"older continuous potential $\psi_2$.

 We say that $\psi$ is \emph{stretched} H\"older, if there are constants $t,C>0$ and $\theta \in (0,1)$ such that $var_{n}(\psi)< C\, \theta^{n^t}$, for $n \geq 1$. 

\item (\cite{Kempton-Pollicot}, Theorem 5.3 and \cite{chazottes.ugalde2}, Theorem 4.1)  If $\psi_1$ is stretched H\"older, then  $\psi_2$ can be chosen stretched H\"older. 

\item (\cite{Kempton-Pollicot}, Theorem 5.1) If $\sum_{n\geq 1} n^{d+1} var_n(\psi_1) <+\infty$  for some $d\geq 0$, then  $\psi_2$ is such that $\sum_{n\geq 1} n^{d} var_n(\psi_2) <+\infty.$

\item (\cite{kempton},Theorem 1.8 and \cite{piraiano}, Theorem 2) Similar results for the case of subshift of finite type under fiber-wise mixing assumption. 


\item (\cite{yayama},Theorem 3.1) A non-additive version of the same problem: if $\mu$ is a Gibbs measure for a sequence of  almost additives potentials $\Psi=\{\psi_n\}_n$ on $\Sigma_1$ with bounded variation, then $\Pi_* \mu$ is a Gibbs measure for a sequence of continuous potentials $\Phi=\{\phi_n\}_n$. 

\end{itemize}

The main object of this paper is the notion of \emph{sequential} Gibbs measures. We say that $\mu$(not necessarily invariant probability) is a sequential Gibbs measure with respect to $\phi: \Sigma \rightarrow \mathbb{R}$ if there are  constants $K,P$ such that for $\mu$-a.e. $\underline{x} \in \Sigma$, there exists an increasing sequence of natural numbers $n_i(\underline{x})\in \mathbb{N}$ such that for every $0\leq j \leq n_i-1$ 

\begin{equation}\label{eq.1gibbs}
 K^{-1}\leq \frac{\mu\big( [x_j\dots x_{n_i-1}])}{e^{\phi^{n_i-j}(\ud x)-(n_i-j)P}}\leq K,
\end{equation} 
where $\phi^n(\ud x) = \sum_{i=0}^{n-1} \phi(f^i(\ud x))$ and $ [x_0x_1\cdots x_{n-1}] = \{\ud y=y_0y_1\dots \in \Sigma; \text{ such that } y_i=x_i, \text{ for } i=0,\dots,n-1\}.$  If Equation~\eqref{eq.1gibbs} holds for the sequence $n_i(\ud x)=i$ and every $\ud x \in \Sigma$, the sequential Gibbs measures is just a standard Gibbs measure.

The maximal subsequence $n_i(\ud x)$ satisfying the Equation~\eqref{eq.1gibbs} is called the sequence of \emph{Gibbs times} of $\ud x$.  Note that if $n_{i}(\underline{x})$ is a Gibbs time of $\underline{x}$ then for $n\leq n_{i}(\ud x)$, $n_{i}(\ud x)-n$ is Gibbs time of $\sigma^{n}(\underline{x})$.

 We give natural examples of sequential Gibbs measures that are not standard Gibbs measures in Section~\ref{examples}, where we discuss equilibrium states on shifts constructed in \cite{Hofbauer} and image of non-lacunary Gibbs measures of local diffeomorphisms for  H\"older potentials studied in \cite{OV}, \cite{VV} and \cite{ramosviana} under coding by some partition.

The results that we obtain here are a kind of non-uniform counterparts of those in \cite{Kempton-Pollicot,chazottes.ugalde2} adapted for sequential Gibbs measures, for much less regular potentials and more suitable for the study of non-uniformly hyperbolic dynamical systems. We prove that given a  sequential Gibbs measure $\mu$ for a continuous potential $\psi_{1}$ on a shift $\Sigma_1$ and  $\Pi: \Sigma_1 \rightarrow \Sigma_2$ a  one block factor map that is  regular with respect to $\mu$, then  the measure $\nu:= \Pi_*\mu$ on $\Sigma_2$ is a  sequential Gibbs measure for some almost everywhere continuous potential  $\psi_{2}:\Sigma_{2} \rightarrow \mathbb{R}$. We also obtain local estimates almost everywhere for the regularity of $\psi_2$ based on the regularity almost everywhere of $\psi_1$. 

The results obtained in this paper can be extended to the case of subshift of type finite with the property of topologically mixing in the fibers, as in  \cite{kempton}.  We also expect that the theorems obtained here would be useful to obtain a Central Limit Theorem for pointwise dimension of non-lacunary Gibbs measures, using the approach of  \cite{leplaideursausool}. 

\section{Results}\label{s.results}

The definition of sequential Gibbs measures depends on the constants $K$ and $P$.  However, if we denote by $G=\{\ud x \in \Sigma;  \text{$\ud x$ has infinitely many Gibbs times}\}$, we prove that $P$ is uniquely determined by the \emph{pressure} $P_G(\psi)$ of $\psi$ with respect to $G$. 

To define  $P_G(\psi)$, denote by $\mathcal{C}_n$ the set of all cylinders of length $n$. We consider the family $m_\alpha(\cdot,\psi,N)$   of exterior measures defined by 
$$
m_\alpha(G,\psi,N)= \inf_{\mathcal{U}}  \left\{\sum_{C \in \mathcal{U}} e^{-\alpha n(C) + \sup_{x\in C} \psi^{n(C)}(x)}\right\},
$$ where the infimum is take over all open covers $\mathcal{U} \subset \cup _{n\geq N}\mathcal{C}_n$ of $G$ and $n(C)$ is the length of $C$. Then, we set  
$$
m_\alpha(G,\psi)= \lim_{N\rightarrow \infty} m_\alpha(G,\psi,N) 
$$ and define
$$
P_G(\psi)=\inf\{\alpha \in \mathbb{R}; m_\alpha(G,\psi)=0\}.
$$ For more details and properties  about $P_G(\psi)$, we suggest \cite[Section 11, Chapter 4]{Pesin}. Now, we prove that


	

\begin{proposition}\label{p.pressaoG} If $\psi$ admits a sequential Gibbs measure $\mu$, then  $P=P_G(\psi)$ is the unique number that satisfy Equation~\eqref{eq.1gibbs}, where $G$ is the set of points with infinitely many Gibbs times. If $\mu$ is an ergodic invariant measure, $P_G(\psi) = h_\mu(\sigma)+\int \psi \,d\mu$. 
\end{proposition}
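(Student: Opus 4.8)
The plan is to prove the two statements in turn, treating the identification $P=P_G(\psi)$ as a pair of pressure inequalities and then deducing the ergodic formula as a corollary. Uniqueness of $P$ is the easy part: if $P$ and $P'$ both satisfy Equation~\eqref{eq.1gibbs}, with constants $K$ and $K'$, then for $\mu$-a.e. $\ud x$ and each Gibbs time $n_i=n_i(\ud x)$, combining the two-sided bounds with $j=0$ gives $e^{n_i|P-P'|}\le KK'$, i.e. $|P-P'|\,n_i\le\log(KK')$; since $\ud x\in G$ has $n_i\to\infty$, this forces $P=P'$. It then remains to show $P_G(\psi)=P$, which I would do by checking $m_\alpha(G,\psi)=0$ for $\alpha>P$ and $m_\alpha(G,\psi)>0$ for $\alpha<P$.

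For the first direction I use that $\mu(G)=1$. For each $\ud x\in G$ and each $N$, choose the smallest Gibbs time $\tau(\ud x)\ge N$ and the cylinder $C_{\ud x}=[x_0\cdots x_{\tau(\ud x)-1}]$; these cover $G$, and since cylinders are nested-or-disjoint I extract the disjoint subfamily of maximal ones, an admissible cover $\mathcal U_N\subset\bigcup_{n\ge N}\mathcal C_n$ of $G$ by pairwise disjoint Gibbs cylinders. On such a cylinder $C$ of length $n$, the Gibbs lower bound at its Gibbs point $\ud x_C$ gives $e^{\psi^{n}(\ud x_C)}\le K\mu(C)e^{nP}$, while $\sup_C\psi^n\le\psi^n(\ud x_C)+V_n$ with $V_n=\sum_{m=1}^n var_m(\psi)$, so the weight of $C$ is at most $K\mu(C)e^{(P-\alpha)n+V_n}$. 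Since $\psi$ is continuous, $var_m(\psi)\to0$ and hence $V_n/n\to0$ by Cesàro; thus for $\alpha>P$ and $N$ large each term is $\le K\mu(C)e^{-\frac12(\alpha-P)N}$, and summing over the disjoint family gives $m_\alpha(G,\psi,N)\le Ke^{-\frac12(\alpha-P)N}\to0$. Hence $m_\alpha(G,\psi)=0$ and $P_G(\psi)\le P$.

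The reverse inequality is the heart of the matter, and I would obtain it from the mass distribution principle: it suffices to produce $c>0$ with $\mu(C)\le c\,e^{-\alpha n(C)+\sup_C\psi^{n(C)}}$ for all long cylinders $C$ meeting $G$, for then any admissible cover $\mathcal U$ satisfies $\sum_{C\in\mathcal U}e^{-\alpha n(C)+\sup_C\psi^{n(C)}}\ge c^{-1}\sum_C\mu(C)\ge c^{-1}\mu(G)=c^{-1}$, so $m_\alpha(G,\psi)\ge c^{-1}>0$ and $P_G(\psi)\ge\alpha$ for every $\alpha<P$. If $C$ is itself a Gibbs cylinder this holds with $c=K$. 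For a general $C=[x_0\cdots x_{n-1}]$ I would partition $C\cap G$ (of full measure in $C$) into its shortest Gibbs sub-cylinders $D$; using $\sigma^n[x_0\cdots x_{n-1}]=\Sigma$ and the hereditary property of Gibbs times, each $E=\sigma^n D$ is again a Gibbs cylinder, and the Gibbs estimates collapse the bound to
\[
\mu(C)\ \le\ K\,e^{\sup_C\psi^{n}-nP}\sum_{E}e^{\sup_E\psi^{n(E)}-n(E)P}.
\]
The main obstacle is exactly the control of this critical partition sum $\Theta=\sum_E e^{\sup_E\psi^{n(E)}-n(E)P}$: when $\psi$ has summable variation, the Gibbs bounds give $\Theta\le Ke^{V_\infty}\sum_E\mu(E)\le Ke^{V_\infty}$ and the argument closes at once, but for a merely continuous potential the factors $e^{V_{n(E)}}$ produced by deep sub-cylinders (large gaps between consecutive Gibbs times) need not be summable against $\mu(E)$. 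To handle that case I would replace the one-step partition by a multi-scale stopping-time cover adapted to the Gibbs times and use $V_n/n\to0$ to absorb the distortion scale by scale; this is the step I expect to be delicate.

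Finally, the ergodic formula is a corollary of $P=P_G(\psi)$. Fixing $\ud x\in G$ where both the Shannon--McMillan--Breiman theorem, $-\tfrac1n\log\mu([x_0\cdots x_{n-1}])\to h_\mu(\sigma)$, and Birkhoff's theorem, $\tfrac1n\psi^n(\ud x)\to\int\psi\,d\mu$, hold, I evaluate Equation~\eqref{eq.1gibbs} with $j=0$ along the Gibbs times $n=n_i(\ud x)\to\infty$: taking logarithms and dividing by $n$, the constant $\log K$ washes out and the two limits squeeze the expression to $0=-h_\mu(\sigma)-\int\psi\,d\mu+P$, so $P_G(\psi)=P=h_\mu(\sigma)+\int\psi\,d\mu$.
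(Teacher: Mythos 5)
Your treatment of the uniqueness of $P$, of the inequality $P_G(\psi)\le P$, and of the ergodic formula matches the paper's in substance. For $P_G(\psi)\le P$ you build, exactly as the paper does, a cover of $G$ by disjoint cylinders of length at least $N$ whose lengths are Gibbs times, and sum the Gibbs upper bounds; your Ces\`aro correction $V_n=\sum_{m\le n}var_m(\psi)$, needed to pass from $\psi^n$ evaluated at the Gibbs point to $\sup_C\psi^n$, is a point the paper silently skips and is a worthwhile addition. For the ergodic formula you use Shannon--McMillan--Breiman plus Birkhoff along the Gibbs times where the paper invokes Brin--Katok's local entropy formula; on a shift these amount to the same statement, so this part is equivalent.

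The genuine issue is the reverse inequality $P_G(\psi)\ge P$, which you correctly identify as the heart of the matter and then leave open. Your mass-distribution reduction requires the estimate $\mu(C)\le c\,e^{-P\,n(C)+\sup_C\psi^{n(C)}}$ for \emph{arbitrary} cylinders $C$ meeting $G$, because the covers in the definition of $m_\alpha(G,\psi,N)$ are not restricted to cylinders whose length is a Gibbs time. Your decomposition of such a $C$ into shortest Gibbs sub-cylinders produces the partition sum $\Theta=\sum_E e^{\sup_E\psi^{n(E)}-n(E)P}$, which you control only under summable variation; the promised ``multi-scale stopping-time cover'' for merely continuous $\psi$ is announced but not carried out, so as written the proposal does not establish $P_G(\psi)\ge P$. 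For comparison, the paper dispatches this direction in one sentence, proving the lower bound $e^{-Pn+\psi^n(\ud x)}\ge K^{-1}\mu(C)$ only for cylinders whose length $n$ is a Gibbs time of some point of $C$ and asserting that the rest ``follows in a similar fashion''; that is exactly the easy case you already handle. You have therefore located a real lacuna in the argument rather than invented one, but you have not filled it, and the proposition's first claim remains unproved in your write-up.
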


\begin{proof}

In fact, assume that $\mu$ is a sequential Gibbs measure with constants $K$ and $P$ satisfying Equation~\eqref{eq.1gibbs}. For the first part, denote by $\mathcal{G}_n$ the collection of all cylinders $C=[x_0\dots x_{n-1}]$ such that $n$ is a Gibbs time of some $\ud x \in C$.  Fixed $k$, by definition  of  $G$ we have that  $\mathcal{U}_k= \cup_{n>k} \mathcal{G}_{n}$ is an open cover of $G$ and  
$$
\mathcal{V}_k = \bigcup\limits_{n>k}\{[x_0\dots x_{n-1}] \in \mathcal{G}_n; [x_0\dots x_{l-1}] \notin \mathcal{G}_l, \text{ for } k\leq l< n\}
$$ is an open partition  of $G$.  For any $\gamma>P$ we have that
\begin{equation*}
\begin{aligned}
m_\gamma(G,\psi,k) \leq \sum_{C \in \mathcal{V}_k} e^{-\gamma n(C) + \sup_{\ud x\in C} \psi^{n(C)}(\ud x)} &=& \\
 =   \sum_{C \in \mathcal{V}_k}e^{-(\gamma-P)n(C)} e^{-Pn(C) + \sup_{\ud x\in C} \psi^{n(C)}(\ud x)}  &\leq& Ke^{-(\gamma-P)k} \sum_{C \in \mathcal{V}_k} \mu(C).  
\end{aligned}
\end{equation*} Since $\sum_{C \in \mathcal{V}_k} \mu(C) \leq 1$, taking   $k \rightarrow \infty$, we have that $m_\gamma(G,\psi,k)=0$ and   $P\geq P_G(\psi)$. The opposite inequality follows in a similar fashion from the fact that for every  cylinder $C=[x_0\dots x_{n-1}]$ such that $n$ is a Gibbs time of some point $\underline{x}$ of $C$ we have that $ e^{-P n +  \psi^n(\underline{x})} \geq K^{-1} \mu(C).$ 

Now, we prove that $P_G(\psi) = h_\mu(\sigma)+\int \psi \,d\mu$ for an ergodic invariant sequential Gibbs measure. To finish the proof, just observe that by Brin-Katok's local entropy formula we have that for almost every  $\underline{x} \in G$, if $n_i(\ud x)$ is the sequence of Gibbs times of $\ud x$, then   
$$
h_\mu(\sigma) = -\lim \frac{1}{n_i}\log\mu([x_0\dots x_{n_i-1}]) = P_G(\psi)-\int \psi\,d\mu.
$$  

\end{proof}

By the previous Proposition, since $P$ is uniquely defined, we call it the \emph{pressure} of the sequential Gibbs measure $\mu$.  Through this paper, we assume that the constant $K$ in Equation~\eqref{eq.1gibbs} is fixed. Without loss of generality,  we  assume that  $P=0$ in Equation~\eqref{eq.1gibbs}, since  $\mu$ is a sequential Gibbs  measure  for $\psi$ with pressure $P$  if, and only if, $\mu$ is a sequential Gibbs measure for  $\psi-P$ with pressure zero.

\begin{remark}\label{r.G}
In some examples, the potential $\psi$ and the sequential Gibbs measure $\mu$ are obtained from an equilibrium state of a H\"older potential and a  nonuniformly expanding map, using a semiconjugacy with a full shift. The set $G$ is the preimage under the semiconjugacy of the set of points with infinitely many hyperbolic times. See Example~\ref{ex.localdiffeos} for details.

Under some hypothesis about the potential, like small variation condition, the set $G$ has full pressure, i.e., every measure with big pressure gives full measure to the set $G$. This is discussed with more detail at Example \ref{ex.localdiffeos}. In the Example \ref{ex.hofbauer}, we present another situation where $G$ is a dense set with full entropy.
\end{remark}

\begin{definition}\label{fs hyp}
 We say that an one block factor map $\Pi:\Sigma_1 \rightarrow \Sigma_2$ is regular with respect to a sequential Gibbs measure $\mu$ on $\Sigma_1$, if there exists a $\mu$-full measure set $D \subset G \subset \Sigma_1$, such that  given  $\underline{x} \in D$ then  $\Pi^{-1}(\Pi(\ud x)) \subset G$  and  $n_{1}(\underline{ x})=n_1(\underline{y})$, for every $\ud{y} \in \Pi^{-1}(\Pi(\ud x))$. 
 
\end{definition}

From now on, we consider only probability measures $\mu$  such that $\mu(\sigma^{-1}(A))=0$ for every set $A\subset \Sigma_1$ with $\mu(A)=0$. Since $n_k(\ud x)=n_1(\sigma^{n_{k-1}(\ud x)}(\ud x))$,  if $\Pi$ is regular with respect $\mu$  then we may define $E= \Pi(\cap_{k\geq 0} \sigma^{-k}(D))$ and observe that $E\subset \Sigma_2$ is a $\nu$-full measure set such that    given  $\underline{x}, \underline{y}\in \Pi^{-1}(E)$, with  $\Pi(\underline{x})=\Pi(\underline{y})$ then  $n_{k}(\underline{x})=n_k(\underline{y}),$ for every $k\geq 1$.

We define the $n$-th  variation  of $\phi$ at $\ud x=x_0x_1...x_n... $  as
\[var_{n}(\phi,\ud x)=\sup \{|\phi(\underline{x})-\phi(\underline{w})|: \ud w\in [x_{0}...x_{n-1}] \}.\]
where  $[x_{0}...x_{n-1}]=\{\underline{w}\in \Sigma:w_{0}...w_{n-1}=x_{0}...x_{n-1}\}$ is the cylinder of length $n$ at $\ud x$.

We define the variation of a potential $\phi:\Sigma \rightarrow \mathbb{R}$ on the set $K \subset \Sigma$ with respect to $X$ by
\[var_n(\phi,K):=\sup \limits_{\ud x \in K}var_n(\phi,\ud x).\]

Now, we state  the first result of this paper:

\begin{theorem}\label{fs.teo proj}
Let  $\mu$  be a sequential Gibbs measure for a continuous  potential $\psi_{1}:\Sigma_1 \rightarrow \mathbb{R}$. If  $\Pi$ is regular with respect to $\mu$, then  the measure $\nu:=\Pi_*\mu$ on $\Sigma_2$ is a  sequential Gibbs measure for some  potential  $\psi_{2}:\Sigma_{2} \rightarrow \mathbb{R}$, continuous at $\nu$ almost every point. 
\end{theorem}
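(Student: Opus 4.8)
\emph{Plan.} After Proposition~\ref{p.pressaoG} we may normalize $P=0$, so the target is a constant $K'$ and a potential $\psi_2$ with $K'^{-1}\le \nu([y_0\cdots y_{m-1}])/e^{\psi_2^{\,m}(\ud y)}\le K'$ along a sequence of Gibbs times of $\ud y$. The natural candidate is a fiber-summed potential, which I build from the conditional probabilities of $\nu$. For $\ud y\in\Sigma_2$ and $n\ge1$ put
\[
S_n(\ud y)=\sum_{\pi(w_0)=y_0,\,\dots,\,\pi(w_{n-1})=y_{n-1}} e^{\psi_1^{\,n}(\ud w)},
\]
the sum over all length-$n$ words $w=w_0\cdots w_{n-1}$ over $\{1,\dots,k_1\}$ projecting to $y_0\cdots y_{n-1}$, with $\ud w$ any point of $[w_0\cdots w_{n-1}]$. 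The first step is the exact identity $\nu([y_0\cdots y_{n-1}])=\sum_w\mu([w_0\cdots w_{n-1}])$ over the same words. If $n$ is a Gibbs time of $\ud y$ then, because $\Pi$ is regular (Definition~\ref{fs hyp}) and $E=\Pi(\cap_{k\ge0}\sigma^{-k}(D))$, the index $n$ is a common Gibbs time of every point of the fiber $\Pi^{-1}(\ud y)$; choosing for each $w$ a fiber point in $[w_0\cdots w_{n-1}]$ and applying \eqref{eq.1gibbs} with $j=0$ gives $K^{-1}\le\mu([w_0\cdots w_{n-1}])/e^{\psi_1^{\,n}(\ud w)}\le K$, whence, after summing,
\[
K^{-1}S_n(\ud y)\le \nu([y_0\cdots y_{n-1}])\le K\,S_n(\ud y)\qquad(n\ \text{a Gibbs time of}\ \ud y).
\]
This already transfers the Gibbs inequality to $\nu$ provided $S_n(\ud y)\asymp e^{\psi_2^{\,n}(\ud y)}$ with a uniform constant.

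\emph{Definition and regularity of $\psi_2$.} For the second step I set, at $\nu$-a.e.\ $\ud y$,
\[
\psi_2(\ud y)=\lim_{m\to\infty}\log\frac{S_m(\ud y)}{S_{m-1}(\sigma\ud y)}=\lim_{m\to\infty}\log\frac{\nu([y_0\cdots y_m])}{\nu([y_1\cdots y_m])},
\]
the two limits agreeing on $E$ by the previous estimate. That the limit exists is the content of the Kempton--Pollicott/Chazottes--Ugalde argument: consecutive terms differ by a quantity controlled by $var_m(\psi_1)$, and since $\psi_1$ is continuous, $var_m(\psi_1)\to0$, so the sequence is Cauchy. The same estimate bounds $var_n(\psi_2,\ud y)$ by a tail of the variations of $\psi_1$ for $\ud y\in E$, which yields continuity of $\psi_2$ at $\nu$-a.e.\ point (and only a.e., since the fiber structure and the convergence rate are controlled only on $E$).

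\emph{Gibbs inequality and the main obstacle.} Combining the two displays, it remains to bound, for a Gibbs time $n$ of $\ud y$, the telescoping error
\[
\Big|\log S_n(\ud y)-\psi_2^{\,n}(\ud y)\Big|=\Big|\sum_{j=0}^{n-1}\Big(\log\tfrac{S_{n-j}(\sigma^j\ud y)}{S_{n-j-1}(\sigma^{j+1}\ud y)}-\psi_2(\sigma^j\ud y)\Big)\Big|,
\]
where I used $S_0\equiv1$ and the fact that $n-j$ is a Gibbs time of $\sigma^j\ud y$ (the remark after \eqref{eq.1gibbs}) to keep each ratio comparable, up to $K^{2}$, to a genuine conditional measure of $\nu$. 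Each summand is a tail of the convergent sequence defining $\psi_2$, so the obstacle is to show that these tails sum to a bound $\log K''$ \emph{independent of $n$ and of $\ud y$}; together with the second display this gives \eqref{eq.1gibbs} for $\nu$ with $K'=KK''$ along the Gibbs times $n_i(\ud y)$, which are then the Gibbs times of $\nu$. I expect this uniform summation to be the heart of the proof: it is exactly here that one must invoke the coupling/mixing estimate of \cite{Kempton-Pollicot,chazottes.ugalde2} for the fibers of a one block factor map, now used only at Gibbs times and only for $\nu$-a.e.\ $\ud y$, so that the merely continuous regularity of $\psi_1$ and the almost-everywhere nature of \eqref{eq.1gibbs} suffice. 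The local estimates on the regularity of $\psi_2$ announced in the abstract should then follow by tracking the dependence of this bound on $var_n(\psi_1,\ud y)$ along $E$.
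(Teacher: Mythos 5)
Your architecture is the paper's: fiber sums at Gibbs times, the identity $\nu([y_0\cdots y_{n-1}])=\sum_w\mu([w_0\cdots w_{n-1}])$, regularity of $\Pi$ to make $n$ a common Gibbs time of the whole fiber, a ratio-type potential $\psi_2$, and a telescoping comparison. But there are two genuine gaps. First, your justification for the existence of the limit defining $\psi_2$ --- ``consecutive terms differ by a quantity controlled by $var_m(\psi_1)$, and since $var_m(\psi_1)\to0$ the sequence is Cauchy'' --- is not a proof: vanishing consecutive differences do not give Cauchyness, and for merely continuous $\psi_1$ there is no summable rate to invoke. The actual mechanism (Lemmas~\ref{fs. nested}--\ref{fs.pglema13} and Corollary~\ref{fs.cor14}) is a coupling/contraction estimate
$\lambda_{k}(\ud z)\leq c\,e^{2\sum var}+(1-c)\lambda_{i}(\ud z)$ with $c=K^{-4}$ coming from the Gibbs inequality \emph{at Gibbs times} (Lemma~\ref{fs.lema 4.5}), combined with monotone nestedness of the intervals $\Lambda_k(\ud z)$; iterating along the Gibbs times forces $\lambda_k\to1$. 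You cannot simply ``invoke the coupling/mixing estimate of \cite{Kempton-Pollicot,chazottes.ugalde2}'': there the Gibbs property holds at every time and for every point, whereas here the probability vectors $P^{k,i}$ and the constant $c$ are only controlled when both indices are Gibbs times of $\ud z$, and the whole fiber must share those times --- this adaptation is the actual content of the theorem, not a citation. Relatedly, your $S_n(\ud y)$ is ambiguous off Gibbs times (``$\ud w$ any point of the cylinder''); the paper's device of appending an explicit tail $\ud w$ and proving independence of the limit from $\ud w$ is what resolves this.

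Second, the step you flag as the ``main obstacle'' would fail as you set it up. Writing $\log S_n-\psi_2^{\,n}$ as a sum of $n$ tails of the convergent sequence defining $\psi_2$ requires those tails to be summable uniformly in $n$, which again is unavailable for continuous $\psi_1$. The paper never sums tails: because the approximants $u_{\ud w,k}$ telescope exactly (the numerator of one is the denominator of the next), the Birkhoff sum $\psi_2^{\,n_i-l+1}(\sigma^l\ud z)$ equals a limit of a \emph{single} ratio of fiber sums, and Corollary~\ref{fs.cor 2.4} (a uniform distortion bound with constant $K^2$, again a consequence of \eqref{eq.1gibbs} at Gibbs times) pins that ratio within a fixed multiplicative constant of $\sum_{\ud x'=x_l\dots x_{n_i}}e^{\psi_1^{n_i-l+1}(\ud x'\ud w)}$ for every Gibbs time $n_i$. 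That is how the constant $K'$ independent of $n$ and $\ud y$ is obtained; your plan leaves precisely this point open.
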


  Now we study the modulus of continuity of  $\psi_2$ at some point with respect to the modulus of  $\psi_1$ at its preimages. 

\begin{theorem}\label{fs.teo reg1}

 Let  $\mu$  be a sequential Gibbs measure for a continuous potential $\psi_{1}:\Sigma_1 \rightarrow \mathbb{R}$. If for $\nu$-a.e. $\ud z\in \Sigma_2$, we have that  $\limsup n_k(z)/k <+\infty$ and  there exist a decreasing positive  function $f_{\ud z} : \mathbb{N} \rightarrow \mathbb{R}$ such that $\limsup f_{\ud z}(k)k < +\infty$   and for every $1\leq j \leq n_k$ we have    
$$
var_{j}(\psi_1,\Pi^{-1}(\sigma^{n_k-j}(\ud z)))< f_{\ud z}(j).
$$

Then, given any $\gamma<1$ there are constants $0<\alpha<1$ and $C>0$  such that for $\nu$-almost every point $\ud z \in \Sigma_2$,  there exists $k_0(\ud z)$ such that for each $k>k_0(\ud z)$ given $ \ud z' \in [z_0,\dots,z_k]$, then 
  $$
	|\psi_2(\ud z)-\psi_2(\ud z')|< C\max\{ \alpha^{k^{1-\gamma}}, f_{\ud z}([k^\gamma])k  \}.
$$

\end{theorem}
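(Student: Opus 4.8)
The plan is to work from the explicit description of $\psi_2$ produced in the proof of Theorem~\ref{fs.teo proj}. At $\nu$-almost every $\ud z$ the potential is recovered as a limit, taken along the common Gibbs times $n_k=n_k(\ud z)$ shared by the whole fiber (this sharing is exactly the regularity of $\Pi$), of logarithmic ratios of cylinder masses,
$$
\psi_2(\ud z)=\lim_{k\to\infty}\log\frac{\nu([z_0\dots z_{n_k-1}])}{\nu([z_1\dots z_{n_k-1}])}.
$$
First I would pull these masses back to $\Sigma_1$ through $\nu([z_0\dots z_{n_k-1}])=\sum_{\ud x}\mu([x_0\dots x_{n_k-1}])$, where the sum runs over the finitely many cylinders in $\Pi^{-1}([z_0\dots z_{n_k-1}])$, and then use that $n_k$ is a Gibbs time for every such $\ud x$ to replace each $\mu([x_0\dots x_{n_k-1}])$ by $e^{\psi_1^{n_k}(\ud x)}$ up to the fixed multiplicative constant $K^{\pm1}$ (recall $P=0$). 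In this way the comparison of $\psi_2(\ud z)$ and $\psi_2(\ud z')$, for $\ud z'\in[z_0\dots z_k]$, reduces to comparing partition-function-type sums $\sum_{\ud x}e^{\psi_1^{n}(\ud x)}$ over the fibers above the two orbits, which agree up to coordinate $k$.

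Then comes the two-scale decomposition. Fix $\gamma<1$ and set the intermediate length $m=[k^\gamma]$. For a Gibbs time $n=n_k$, which is comparable to $k$ by the hypothesis $\limsup n_k/k<\infty$, split $\psi_1^{n}(\ud x)=\psi_1^{m}(\ud x)+\psi_1^{n-m}(\sigma^m\ud x)$. The inner block $\psi_1^{n-m}\circ\sigma^m$ senses only coordinates beyond $m$; since $\ud z$ and $\ud z'$ coincide well past $m$, the corresponding fiber sums should differ only through a contraction coming from the Gibbs structure, and I expect this to supply the factor $\alpha^{k^{1-\gamma}}$. The outer block $\psi_1^{m}$ is governed by the fiberwise variation hypothesis: along each fiber $\Pi^{-1}(\sigma^{n_k-j}(\ud z))$ the oscillation of $\psi_1$ at scale $j$ is at most $f_{\ud z}(j)$, and accumulating these bounds over the $\sim n_k\sim k$ terms, using monotonicity of $f_{\ud z}$ so that $f_{\ud z}(j)\le f_{\ud z}(m)=f_{\ud z}([k^\gamma])$ for $j\ge m$, yields the term $f_{\ud z}([k^\gamma])\,k$. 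Taking the maximum of the two contributions gives the stated bound, while the almost-everywhere threshold $k_0(\ud z)$ absorbs the convergence of the defining limit together with the two $\limsup$ hypotheses.

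The main obstacle is establishing the contraction that produces $\alpha^{k^{1-\gamma}}$ in this genuinely non-uniform, sequential setting, where the Gibbs comparison is available only at the times $n_k$ and only up to the fixed constant $K$. The natural tool is the contraction of positive operators in the Hilbert projective metric, but here the relevant cones and their diameters are controlled only at Gibbs times, and the base point of the fiber moves under $\sigma$. My plan is to select a chain of roughly $k^{1-\gamma}$ effective contraction steps, spaced about $k^\gamma$ apart, and to show that passing from one step to the next contracts the projective distance between the two fiber measures by a uniform factor $\alpha<1$ depending only on $K$ and on the regularity of $\Pi$; composing the $\sim k^{1-\gamma}$ contractions then gives $\alpha^{k^{1-\gamma}}$. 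The delicate balance on which the exponent $\gamma$ turns is precisely the trade-off built into the statement: enlarging $m=[k^\gamma]$ shrinks the variation term $f_{\ud z}([k^\gamma])$ but leaves fewer contraction steps $k^{1-\gamma}$, so one must keep the error injected by the variation of $\psi_1$ from overwhelming the contraction while verifying that the spacing $k^\gamma$ is simultaneously large enough for a definite contraction and small enough for the variation estimate to hold for $\nu$-a.e.\ $\ud z$ beyond $k_0(\ud z)$.
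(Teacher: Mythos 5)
Your two-scale architecture is exactly the paper's: iterate a uniform Doeblin-type contraction roughly $k^{1-\gamma}$ times at spacing $[k^\gamma]$ in the Gibbs-time index, let each step contribute a factor $\alpha=1-c$ plus an additive error controlled by the fiberwise variation, and balance the two contributions to get $\max\{\alpha^{k^{1-\gamma}}, f_{\ud z}([k^\gamma])k\}$. The paper does precisely this by iterating Lemma~\ref{fs.pglema13} along the indices $k,2k,\dots,lk$ with $k=[n^\gamma]$ and $l=[n^{1-\gamma}]$, telescoping after multiplication by $\alpha^{l-i}$. So the skeleton is right. But there are two genuine gaps.

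First, your reduction step does not work as stated. You propose to write $\psi_2(\ud z)$ as a limit of ratios $\nu([z_0\dots z_{n_k-1}])/\nu([z_1\dots z_{n_k-1}])$ and then replace each cylinder mass by $e^{\psi_1^{n_k}(\ud x)}$ ``up to the fixed multiplicative constant $K^{\pm1}$.'' The constant $K$ does not decay with $k$, so any comparison of $\psi_2(\ud z)$ and $\psi_2(\ud z')$ routed through the Gibbs inequality in this way carries an additive error of order $\log K$ and can never produce a modulus of continuity tending to zero. The paper avoids this entirely: $\psi_2=\log u$ where $u(\ud z)$ is the limit of the partition-sum ratios $u_{\ud w,k}(\ud z)$, and for $\ud z'\in[z_0\dots z_{n_k}]$ one has the \emph{exact} identity $\Lambda_k(\ud z)=\Lambda_k(\ud z')$ of the nested intervals, because $u_{\ud w,k}$ depends only on the first $n_k+1$ symbols of its argument. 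Hence $u(\ud z)$ and $u(\ud z')$ lie in the same interval of logarithmic diameter $\log\lambda_k(\ud z)$, and the whole theorem reduces, with no loss of a constant, to estimating the decay of $\lambda_k$. The Gibbs constant $K$ enters only in producing the uniform lower bound $c=K^{-4}$ on the ratios of the conditional probability vectors $P^{k,i}(\ov x,\ud w)$, i.e., in the contraction constant itself, not in the reduction.

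Second, the contraction estimate that you correctly identify as ``the main obstacle'' is left as a plan. Your proposed route through the Hilbert projective metric is a legitimate alternative to the paper's explicit convex-splitting argument (write $P_1\cdot A/(P_2\cdot B)$ as a $c$ versus $1-c$ combination and use $a_l\le b_l\,e^{2\sum var}$), and the uniform positivity $P^{k,i}(\ov x,\ud w)/P^{k,i}(\ov x,\ud w')\ge K^{-4}$ is exactly the Doeblin condition that would give a uniform Birkhoff contraction. But since this single inequality, namely $\lambda_{k}(\ud z)\le c\,e^{2\sum_{n}var_{n_k-n}(\psi_1,\Pi^{-1}(\sigma^n(\ud z)))}+(1-c)\lambda_i(\ud z)$, is the entire analytic content of the theorem (the rest being the elementary telescoping and the choice of scales), a proof that only sketches it is incomplete. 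You should either prove the cone contraction with the moving base point and the variation error term made explicit, or reproduce the paper's direct estimate.
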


It follows directly from the Theorem~\ref{fs.teo reg1} that:

\begin{cor}
	(Local stretched Hölder decay): Suppose that there are constants $\Gamma_1>0$  and 
	$\beta_1, \theta_{1} \in (0,1)$ such that for almost every $\ud{ w} \in E$, if  $n_k(\ud{w})$ is the sequence of Gibbs times of $\ud{w}$ and $1\leq j \leq n_k$, we have that for  $k$ big enough
	$$
	var_{j}(\psi_1,\Pi^{-1}(\sigma^{n_k-j}(\ud w))< \Gamma_1{\theta_{1}}^{j^{\beta_1}}.
	$$
	
	Then, we may choose $\psi_2$ in such way that $\nu$ is a sequential Gibbs measure for $\psi_2$ and  there are constants $\Gamma_2>0$  and 	$\beta_2, \theta_{2} \in (0,1)$ such that for almost every $\ud w \in \Sigma_2$, there exists $k_0(\ud w)$ such that given $ \ud w' \in [w_0,\dots,w_k]$ then 
  $$
	|\psi_2(\ud w)-\psi_2(\ud w')|< \Gamma_2 {\theta_{2}}^{k^{\beta_2}},
	$$ where  $\theta_{2}=\max{\alpha,\theta_1} \in (0,1)$  and $\Gamma_2>0$. 
\end{cor}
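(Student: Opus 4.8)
The plan is to apply Theorem~\ref{fs.teo reg1} directly, with the explicit choice $f_{\ud w}(j):=\Gamma_1\,\theta_1^{\,j^{\beta_1}}$, and then simplify the resulting bound. First I would check that this $f_{\ud w}$ meets the hypotheses of that theorem. It is positive and, since $\theta_1\in(0,1)$ and $j\mapsto j^{\beta_1}$ is increasing, it is decreasing in $j$. Moreover $f_{\ud w}(k)\,k=\Gamma_1\,k\,\theta_1^{\,k^{\beta_1}}\to 0$, because a stretched exponential dominates any polynomial, so $\limsup_k f_{\ud w}(k)\,k=0<+\infty$. The control $var_{j}(\psi_1,\Pi^{-1}(\sigma^{n_k-j}(\ud w)))<f_{\ud w}(j)$ for $1\le j\le n_k$ is exactly the hypothesis of the corollary, and I retain the standing assumption $\limsup_k n_k(\ud w)/k<+\infty$ of Theorem~\ref{fs.teo reg1}.

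Having verified the hypotheses, for any fixed $\gamma<1$ Theorem~\ref{fs.teo reg1} yields constants $\alpha\in(0,1)$ and $C>0$ and, for $\nu$-almost every $\ud w$, a threshold $k_0(\ud w)$ such that for $k>k_0(\ud w)$ and every $\ud w'\in[w_0\dots w_k]$,
$$
|\psi_2(\ud w)-\psi_2(\ud w')|<C\,\max\{\,\alpha^{k^{1-\gamma}},\ \Gamma_1\,k\,\theta_1^{\,[k^\gamma]^{\beta_1}}\,\}.
$$
It then remains to collapse this maximum of two terms into a single stretched-exponential bound. For the second term I would use $[k^\gamma]\ge \tfrac12 k^\gamma$ for $k$ large, hence $[k^\gamma]^{\beta_1}\ge 2^{-\beta_1}k^{\gamma\beta_1}$, so that $\theta_1^{\,[k^\gamma]^{\beta_1}}\le \theta_1^{\,2^{-\beta_1}k^{\gamma\beta_1}}$; the polynomial prefactor $k$ is absorbed by slightly enlarging the base, since $k\,\rho^{\,k^{\gamma\beta_1}}\le(\rho')^{\,k^{\gamma\beta_1}}$ eventually whenever $\rho<\rho'<1$. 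The first term is already a stretched exponential with exponent $1-\gamma$.

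Setting $\beta_2:=\min\{1-\gamma,\ \gamma\beta_1\}\in(0,1)$ and writing $\theta_2:=\max\{\alpha,\theta_1\}\in(0,1)$, I use $\alpha,\theta_1\le\theta_2$ together with $k^{1-\gamma},k^{\gamma\beta_1}\ge k^{\beta_2}$ to bound each of the two contributions above by a constant times $\theta_2^{\,k^{\beta_2}}$ (possibly after replacing $\theta_2$ by a slightly larger number in $(0,1)$ to absorb the factor $k$ and the constant $2^{-\beta_1}$). Collecting the constants into a single $\Gamma_2>0$ yields $|\psi_2(\ud w)-\psi_2(\ud w')|<\Gamma_2\,\theta_2^{\,k^{\beta_2}}$, which is the claim; to obtain the best exponent one balances the two contributions by taking $\gamma=1/(1+\beta_1)$, giving $\beta_2=\beta_1/(1+\beta_1)$. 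The only delicate point is the bookkeeping of constants in this last step—how replacing the bases $\alpha,\theta_1$ by $\theta_2$ interacts with the two different stretched exponents and the absorbed polynomial factor—but this is routine once the structure above is fixed, since the entire conceptual content lies in Theorem~\ref{fs.teo reg1}.
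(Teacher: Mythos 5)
Your proof is correct and follows essentially the same route as the paper's: apply Theorem~\ref{fs.teo reg1} with $f_{\ud w}(j)=\Gamma_1\theta_1^{j^{\beta_1}}$, then absorb the floor $[k^\gamma]$, the polynomial prefactor $k$, and the two competing stretched exponents into a single bound $\Gamma_2\theta_2^{k^{\beta_2}}$. Your choice $\beta_2=\min\{1-\gamma,\gamma\beta_1\}$ is the correct one (the paper writes $\max$ at the analogous step, which appears to be a slip), and your explicit verification of the hypotheses of Theorem~\ref{fs.teo reg1} is a minor tidiness improvement rather than a different argument.
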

	
	\begin{proof}
		In Theorem \ref{fs.teo reg1} we put $f_{\ud w}(j)=\Gamma_1\cdot{\theta_{1}}^{j^{\beta_1}}$. Then, for $\gamma<1$, we have that for $ \ud w' \in [w_0,\dots,w_k]$ then 
  $$
	|\psi_2(\ud w)-\psi_2(\ud w')|< C\max\{ \alpha^{k^{1-\gamma}}, \Gamma_1{\theta_{1}}^{[k^\gamma]^{\beta_1}}k \}
		$$
		
		Let $k_0$ such that $\theta:=\theta_1 k_ 0^{\frac{1}{[k_0^\gamma]^{\beta_1}}}<1$ and $\beta<1$ such that $[k^{\gamma}]^{\beta_1}\geq k^{\beta}$. Then, for $k\geq k_0$, we have
		$$
		|\psi_2(\ud w)-\psi_2(\ud w')|< C\max\{ \alpha^{k^{1-\gamma}}, \Gamma_1{\theta}^{ \beta} \}< \Gamma_2 \theta_2^{k^{\beta_2}}
		$$
		where $\theta_2=\max\{\alpha,\theta\}$ and $\beta_2=\max\{1-\gamma, \beta\}$.
	\end{proof}

\begin{cor}
	(Local polynomial decay): Suppose that there constants $\Gamma_1>0$  and 
	$r>2$ such that for every $\ud{ w} \in E$, if  $n_k(\ud{w})$ is the sequence of Gibbs times of $\ud{w}$, we have that  for any $1\leq j \leq n_k$
	$$
	var_{j}(\psi_1,\Pi^{-1}(\sigma^{n_k-j}(\ud w)))< \Gamma_1j^{-r}.
	$$ 
	
	Then, we may choose $\psi_2$ in such way that $\nu$ is a sequential Gibbs measure for $\psi_2$ and there are constants $\Gamma_2>0$ and for every $s<r-1$  such that for $\nu$-a.e. there exists $k_0(\ud w)$ such that given $ \ud w' \in [w_0,\dots,w_k]$ then 
  $$
	|\psi_2(\ud w)-\psi_2(\ud w')|< \Gamma_2 k^{-s}.
	$$ 
\end{cor}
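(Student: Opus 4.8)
The plan is to invoke Theorem~\ref{fs.teo reg1} with the specific choice $f_{\ud w}(j)=\Gamma_1 j^{-r}$ and then unwind the resulting estimate into a pure polynomial bound. First I would check that this $f_{\ud w}$ is admissible as input to the theorem: it is positive and decreasing, and since $r>2>1$ we have $\limsup_k f_{\ud w}(k)\,k=\Gamma_1\limsup_k k^{1-r}=0<+\infty$, so the required growth condition on $f_{\ud w}$ holds (the companion hypothesis $\limsup_k n_k/k<+\infty$ is carried over as a standing assumption, exactly as in the preceding stretched-Hölder corollary). The theorem then furnishes, for any prescribed $\gamma<1$, constants $0<\alpha<1$ and $C>0$ such that for $\nu$-a.e. $\ud w$ there is $k_0(\ud w)$ with
$$
|\psi_2(\ud w)-\psi_2(\ud w')| < C\max\{\alpha^{k^{1-\gamma}},\; \Gamma_1[k^\gamma]^{-r}\,k\}
$$
for all $k>k_0(\ud w)$ and $\ud w'\in[w_0\dots w_k]$.

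The second step is to fix $\gamma$ so that the polynomial term attains the desired order. Since $s<r-1$, the quantity $(s+1)/r$ is strictly less than $1$, so I can select $\gamma\in\bigl((s+1)/r,\,1\bigr)$, which guarantees $\gamma r-1>s$. To absorb the integer part, note that for $k$ large enough one has $[k^\gamma]\geq\tfrac12 k^\gamma$, whence
$$
\Gamma_1[k^\gamma]^{-r}k \leq \Gamma_1 2^{r} k^{\,1-\gamma r} = \Gamma_1 2^{r} k^{-(\gamma r-1)} \leq \Gamma_1 2^{r} k^{-s},
$$
the last inequality using $\gamma r-1>s$ together with $k\geq 1$.

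The third step disposes of the stretched-exponential term. Because $\gamma<1$ we have $1-\gamma>0$, so $k^{1-\gamma}\to\infty$ and $\alpha^{k^{1-\gamma}}$ decays faster than any power of $k$; in particular $\alpha^{k^{1-\gamma}}\leq k^{-s}$ once $k$ is large. Combining the two estimates, for all $k$ beyond a (possibly enlarged) threshold $k_0(\ud w)$ the maximum above is bounded by $\Gamma_1 2^{r} k^{-s}$, and setting $\Gamma_2=C\max\{1,\Gamma_1 2^{r}\}$ yields $|\psi_2(\ud w)-\psi_2(\ud w')| < \Gamma_2 k^{-s}$, as required.

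The argument is essentially exponent bookkeeping, and the only genuinely delicate point is the interplay between the two competing terms as $\gamma$ is pushed toward $1$ in order to recover decay exponents $s$ arbitrarily close to the limiting value $r-1$. One must confirm that increasing $\gamma$ does not let the stretched-exponential contribution $\alpha^{k^{1-\gamma}}$ decay too slowly to be absorbed into $k^{-s}$; since for every fixed $\gamma<1$ that term remains super-polynomial, this is harmless, and the loss of exactly one unit in the exponent ($r\mapsto r-1$) is seen to come solely from the extra factor $k$ multiplying $f_{\ud z}([k^\gamma])$ in the conclusion of Theorem~\ref{fs.teo reg1}.
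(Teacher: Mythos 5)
Your proof is correct and takes essentially the same route as the paper's: substitute $f_{\ud w}(j)=\Gamma_1 j^{-r}$ into Theorem~\ref{fs.teo reg1}, push $\gamma$ close enough to $1$ (the paper via an auxiliary exponent $\lambda<1$ with $[k^\gamma]>k^\lambda$, you via $\gamma>(s+1)/r$ and $[k^\gamma]\geq k^\gamma/2$), and absorb the remaining terms into $k^{-s}$. If anything, your version is slightly more careful than the paper's, which silently discards the stretched-exponential term $\alpha^{k^{1-\gamma}}$ from the maximum and leaves the dependence of $\gamma$ (or $\lambda$) on $s$ implicit.
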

	
	\begin{proof}
		In Theorem \ref{fs.teo reg1} we put $f_{\ud w}(k)=\Gamma_1 k^{-r}$. For $\nu$-a.e. there exists $k_0(\ud w)$ such that given $ \ud w' \in [w_0,\dots,w_k]$ then 
		
		$$
		|\psi_2(\ud w)-\psi_2(\ud w')|< C\max\{ \alpha^{k^{1-\gamma}}, \Gamma_1 [k^\gamma]^{-r}k \}\leq C\,\Gamma_1 [k^\gamma]^{-r}k
		$$
		We can choose $\lambda<1$ such that $[k^{\gamma}]>k^{\lambda}$ for every $k$. Indeed,
		
		\[k^{\gamma \cdot \lambda}-1\leq \frac{k^{\gamma}-1}{k^{\lambda}}\leq \frac{[k^{\gamma}]}{k^{\lambda}}\]
		Then
		\[
		|\psi_2(\ud w)-\psi_2(\ud w')|< C\,\Gamma_1 k^{1-\lambda r}< C\,\Gamma_1k^{-s}\]
		To finish the proof, just put $\Gamma_2=C\,\Gamma_1$.

	\end{proof}

\begin{cor}
	(Local summable variations)
	Suppose that for any $\ud{ w} \in E$, if  $n_k$ is a Gibbs time of $\ud w$, we have that  	
	$$
	\sum_{k\geq 1} k                                                                                                                                                                                                                                                  var_{k}(\psi_1,\Pi^{-1}(\ud w))<\infty 
	$$ 
	Then, we may choose $\psi_2$ in such way that $\nu$ is a sequential Gibbs measure for $\psi_2$ and   such that for $\nu$-a.e. $\ud w \in \Sigma_2$ there exists $k_0(\ud w)$ such that $\ud w' \in [w_0...w_k]$ then
	 $$\sum\limits_{k>k_0(\ud w)}  {k} |\psi_2(\ud w)-\psi_2(\ud w')|< \infty.$$ 
\end{cor}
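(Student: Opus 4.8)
The plan is to deduce this directly from Theorem~\ref{fs.teo reg1}, in exactly the manner of the two preceding corollaries, by feeding in the decreasing majorant $f_{\ud w}(j)=var_{j}(\psi_1,\Pi^{-1}(\ud w))$. First I would check that this choice is admissible for the theorem. The map $j\mapsto var_j(\psi_1,\Pi^{-1}(\ud w))$ is non-increasing, because increasing $j$ shrinks the cylinders over which the supremum defining the variation is taken. Next, convergence of $\sum_{k}k\,var_k(\psi_1,\Pi^{-1}(\ud w))$ forces its general term to vanish, so $j\,f_{\ud w}(j)\to 0$ and in particular $\limsup_j f_{\ud w}(j)\,j<\infty$, which is the second hypothesis of Theorem~\ref{fs.teo reg1}. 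To secure the pointwise bound $var_j(\psi_1,\Pi^{-1}(\sigma^{n_k-j}(\ud w)))<f_{\ud w}(j)$ demanded there, I would use that $E=\Pi(\cap_{k\ge0}\sigma^{-k}(D))$ is forward invariant under $\sigma$, so each shifted base point $\sigma^{n_k-j}(\ud w)$ again lies in $E$ and the hypothesis applies to it; collapsing these base points into a single non-increasing majorant (say $\sup_{m}var_j(\psi_1,\Pi^{-1}(\sigma^m(\ud w)))$) then produces the required $f_{\ud w}$. The condition $\limsup_k n_k(\ud w)/k<\infty$ is taken as a standing assumption, inherited exactly as in the previous two corollaries.

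With $\gamma<1$ fixed, Theorem~\ref{fs.teo reg1} supplies constants $0<\alpha<1$, $C>0$ and a threshold $k_0(\ud w)$ so that for $\ud w'\in[w_0\dots w_k]$ and $k>k_0(\ud w)$,
\[
|\psi_2(\ud w)-\psi_2(\ud w')|< C\,\max\bigl\{\alpha^{k^{1-\gamma}},\,f_{\ud w}([k^\gamma])\,k\bigr\}.
\]
The target series is then dominated by $\sum_{k>k_0} k\,\max\{\alpha^{k^{1-\gamma}},\,k\,f_{\ud w}([k^\gamma])\}$, which I would split along the two entries of the maximum and treat separately.

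The stretched-exponential piece $\sum_k k\,\alpha^{k^{1-\gamma}}$ converges for every $\gamma<1$, since $\alpha^{k^{1-\gamma}}=e^{-(\log\alpha^{-1})k^{1-\gamma}}$ decays faster than any power of $k$ once $1-\gamma>0$ and therefore absorbs the polynomial weight. The remaining piece $\sum_k k^2 f_{\ud w}([k^\gamma])$ is the crux. Here I would reindex by $j=[k^\gamma]$: the integers $k$ with $[k^\gamma]=j$ fill the interval $[j^{1/\gamma},(j+1)^{1/\gamma})$, so they contribute a multiplicity of order $j^{1/\gamma-1}$ and each weight $k^2$ is of order $j^{2/\gamma}$, making the sum comparable to $\tfrac1\gamma\sum_j j^{3/\gamma-1}f_{\ud w}(j)$. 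Letting $\gamma\to1^-$ drives the exponent $3/\gamma-1$ down toward $2$, and finiteness is then read off from the assumed summability of the weighted variations of $\psi_1$.

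The main obstacle is precisely this last power count. The factor $k$ multiplying $f_{\ud w}([k^\gamma])$ in the theorem, combined with the weight $k$ in the target series and the inflation created by the substitution $k\mapsto[k^\gamma]$, is what governs how many powers of $k$ are lost in passing from $\psi_1$ to $\psi_2$; as in the polynomial-decay corollary, each application of the factor-map estimate costs essentially one power of $k$, so the delicate bookkeeping is to match the weight appearing in the hypothesis to the weight appearing in the conclusion and to confirm that $\gamma$ can be chosen close enough to $1$ for the reindexed series to be dominated by the assumed summable one. A secondary technical point, which I would dispatch using the forward invariance of $E$ recorded above, is the uniformity in the base point needed to build a single non-increasing $f_{\ud w}$ valid along the whole sequence of Gibbs times at once.
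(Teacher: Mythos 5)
Your overall strategy (choose a majorant, feed it into Theorem~\ref{fs.teo reg1}, and sum the resulting bound over $k$) is the natural one, but the specific choice $f_{\ud w}(j)=var_{j}(\psi_1,\Pi^{-1}(\ud w))$ makes the final step fail, and you in fact identify the failure point yourself without resolving it. After the theorem substitutes $[k^\gamma]$ into $f_{\ud w}$, the polynomial piece of your dominating series is $\sum_k k^{2}\,var_{[k^\gamma]}(\psi_1,\Pi^{-1}(\ud w))$, and your (correct) reindexing by $j=[k^\gamma]$ turns this into a series comparable to $\sum_j j^{3/\gamma-1}\,var_{j}(\psi_1,\Pi^{-1}(\ud w))$. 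Since $\gamma<1$, the exponent $3/\gamma-1$ is strictly greater than $2$, and even in the limit $\gamma\to 1^-$ it only decreases to $2$; the hypothesis supplies $\sum_j j\,var_j<\infty$, which does not control $\sum_j j^{2}\,var_j$ (take $var_j\sim j^{-5/2}$: the first series converges while the second diverges). So the assertion that ``finiteness is then read off from the assumed summability'' is unjustified, and no admissible choice of $\gamma$ repairs it: the Jacobian factor $j^{1/\gamma-1}$ from the reindexing together with the weight inflation $k^{2}\approx j^{2/\gamma}$ always costs at least one power of $j$ more than the hypothesis provides.

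The paper's own proof avoids this index inflation by a different choice of majorant: it sets $f_{\ud w}(k)=var_{[k^{1/\gamma}]+1}(\psi_1,\Pi^{-1}(\ud w))$, i.e.\ it pre-composes the variation with (an integer version of) the inverse of $k\mapsto k^{\gamma}$, so that after Theorem~\ref{fs.teo reg1} substitutes $[k^\gamma]$ the bound collapses back to $C\max\{\alpha^{k^{1-\gamma}},\,k\,var_{k}(\psi_1,\Pi^{-1}(\ud w))\}$, with no reindexing and no Jacobian factor. That pre-composition is the missing idea in your argument. (Even with it the bookkeeping is delicate --- the weighted target series then reads $\sum_k k^{2}var_k$, so one power of $k$ still has to be accounted for against the hypothesis $\sum_k k\,var_k<\infty$ --- but the step that is unambiguously broken in your proposal is the claimed convergence of $\sum_j j^{3/\gamma-1}var_j$ under the stated hypothesis.)
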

	\begin{proof}
		Let $\beta>0$ and $\gamma=\frac{1}{\beta+1}$, as in proof of the Theorem \ref{fs.teo reg1}. Define $f_{\ud w}(k)=var_{[k^{\frac{1}{\gamma}}]+1}(\psi_1,\Pi^{-1}(\ud w))$. For $\nu$-a.e. there exists $k_0(\ud w)$ such that given $ \ud w' \in [w_0,\dots,w_k]$
		\begin{eqnarray*}
			|\psi_2(\ud w)-\psi_2(\ud w')|&<& C\max\{ \alpha^{k^{1-\gamma}},k\, var_{[[k^{\gamma}]^{\frac{1}{\gamma}}]+1}(\psi_1,\Pi^{-1}(\ud w)) \}\\
			&\leq&
			C\max\{ \alpha^{k^{1-\gamma}},k\, var_{k}(\psi_1,\Pi^{-1}(\ud w))\}
		\end{eqnarray*}
		Obviously, $\sum\limits_{k\geq k_0(\ud w)}  {k}\alpha^{k^{1-\gamma}}<\infty$. Then, jointly with the hypothesis, we have
		\[\sum\limits_{k>k_0(\ud w)}  k|\psi_2(\ud w)-\psi_2(\ud w')|< \infty.\]
	\end{proof}

Before start the proofs of Theorem~\ref{fs.teo proj} and Theorem~\ref{fs.teo reg1}, we prove some properties of sequential Gibbs measures. First,  we observe that the first Gibbs time function give us some information about the growth of the function $n_k$. In fact, define the function $n_1: G \rightarrow \mathbb{R}$ the \emph{first Gibbs time} of $\ud x$. Then,

\begin{proposition} If $\mu$ is an ergodic sequential Gibbs measure such that $\int n_1 \,d\mu<+\infty$,  then for $\mu$-a.e. $\ud x \in \Sigma_1$, there exists $b(\ud x)$ such that for every $k\geq 0$ we have that $n_k(\ud x)\leq  b k$. 
\end{proposition}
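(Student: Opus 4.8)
The plan is to recognise the sequence of Gibbs times as an additive cocycle over the first--Gibbs--time induced map and then to reduce the statement to the boundedness of a Birkhoff average. Set $T:=\sigma^{n_1}$ on $G$. Using the relation $n_k(\ud x)=n_1(\sigma^{n_{k-1}(\ud x)}(\ud x))$ recorded before the definition of $E$, together with the note following Equation~\eqref{eq.1gibbs} (Gibbs times of $\ud x$ descend to Gibbs times of its shifts), one gets $T^{j}(\ud x)=\sigma^{n_j(\ud x)}(\ud x)$ and the telescoping identity
$$
n_k(\ud x)=\sum_{j=0}^{k-1} n_1\bigl(T^{j}(\ud x)\bigr).
$$
Consequently the conclusion $n_k(\ud x)\le b\,k$ is exactly the assertion that the ergodic averages $\tfrac1k\sum_{j<k} n_1\circ T^{j}$ stay bounded, and it is enough to show these averages converge to a finite limit for $\mu$-a.e.\ $\ud x$.

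First I would record the standing reductions: since $\mu$ is a sequential Gibbs measure we have $\mu(G)=1$, the set $G$ is forward invariant, and by hypothesis $n_1\in L^1(\mu)$, so $T$ is defined $\mu$-a.e. The core step is to produce a $T$-invariant probability measure $\mu_*$ for which $n_1$ is still integrable, and then to apply Birkhoff's ergodic theorem to the integrand $n_1$ over the system $(G,T,\mu_*)$, yielding
$$
\frac{n_k(\ud x)}{k}=\frac1k\sum_{j=0}^{k-1} n_1\bigl(T^{j}(\ud x)\bigr)\longrightarrow \int n_1\,d\mu_*<+\infty
$$
for $\mu_*$-a.e.\ $\ud x$. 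The cleanest way to obtain such a $\mu_*$ is to realise the successive Gibbs times as the return times of the $\sigma$-orbit to a positive-measure base $A\subset G$: Kac's lemma then makes $\mu_A:=\mu|_A/\mu(A)$ automatically $T$-invariant and ergodic with $\int n_1\,d\mu_A=1/\mu(A)$, so that $n_k(\ud x)/k\to 1/\mu(A)$ and every $b>1/\mu(A)$ works for all large $k$.

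With the a.e.\ convergence $n_k(\ud x)/k\to c(\ud x)<+\infty$ in hand, the proof finishes by a routine truncation. For $\mu$-a.e.\ $\ud x$ there is $k_0(\ud x)$ with $n_k(\ud x)\le (c(\ud x)+1)k$ for all $k\ge k_0(\ud x)$; since only finitely many indices $k$ with $1\le k<k_0(\ud x)$ remain, one simply enlarges the constant to
$$
b(\ud x):=\max\Bigl\{\,c(\ud x)+1,\ \max_{1\le k<k_0(\ud x)} n_k(\ud x)/k\,\Bigr\},
$$
which gives $n_k(\ud x)\le b(\ud x)\,k$ for every $k\ge 1$, the case $k=0$ being trivial as $n_0=0$.

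The main obstacle is precisely the construction of $\mu_*$, that is, the ergodic control of the induced structure. For a general ergodic sequential Gibbs measure the descent property is only one-sided --- $\sigma(\ud x)$ may acquire Gibbs times not inherited from $\ud x$ --- so the point process of Gibbs times need not be shift-covariant, the induced map $T=\sigma^{n_1}$ need not preserve $\mu$ itself, and integrability $\int n_1\,d\mu<+\infty$ alone does not obviously force a positive lower density of Gibbs times. I would therefore isolate the input that identifies (a cofinal sublattice of) the Gibbs times with honest first-return times to a set $A$ of positive $\mu$-measure, which is exactly what legitimises Kac's lemma. In the motivating examples of Remark~\ref{r.G} and Example~\ref{ex.localdiffeos} the natural choice of $A$ is the set of points realising a Gibbs (hyperbolic) time at the origin, and the only nontrivial verification is $\mu(A)>0$ together with the return-time identification; once these hold, Birkhoff's theorem supplies the finite limit and the argument above concludes.
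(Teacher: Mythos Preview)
Your skeleton is exactly the paper's: write $n_k$ as the Birkhoff sum of $n_1$ along the induced map $T=\sigma^{n_1}$, find a $T$-invariant probability for which $n_1$ is integrable, and read off the finite limit of $n_k/k$. The gap is the one you yourself flag: you do not actually construct $\mu_*$. Your proposal to use Kac's lemma requires identifying the Gibbs times with first returns to a set $A$ of positive $\mu$-measure, and you correctly observe that the descent property of Gibbs times is only one-sided, so in general no such $A$ exists and $T$ does not preserve $\mu$. Leaving this as ``the only nontrivial verification'' is leaving the proof unfinished, since nothing in the hypotheses of the proposition supplies that identification.

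The paper closes exactly this gap by invoking Zweim\"uller's theorem on \emph{generalized} induced transformations (Theorem~1.1 in \cite{zweimuller}): for an ergodic $\sigma$-invariant $\mu$ and any measurable $n_1\ge 1$ with $\int n_1\,d\mu<\infty$, the map $g(\ud x)=\sigma^{n_1(\ud x)}(\ud x)$ admits an ergodic $g$-invariant probability $\mu_g$ absolutely continuous with respect to $\mu$, and $\mu$ is recovered from $\mu_g$ by the tower formula. This is precisely the tool designed for inducing times that are \emph{not} return times, so it bypasses Kac entirely. Birkhoff for $(g,\mu_g)$ then gives $n_k/k\to\int n_1\,d\mu_g$ for $\mu_g$-a.e.\ $\ud x$, and the tower relation transfers this to $\mu$-a.e.\ $\ud x$; your truncation argument in the last paragraph finishes the job. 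In short: replace the Kac step by the Zweim\"uller reference and your proof is complete and coincides with the paper's.
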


\begin{proof}
 Let $G$ be the set of points with infinitely many Gibbs times. We may define  $g:G \to G$ by
\begin{equation*}\label{eq.transf_induzida}
 \quad g(\ud x) = \sigma^{n_1(\ud x)}(\ud x),
\end{equation*}

Since $\int n_1 \,d\mu <+\infty$, using Theorem 1.1 of \cite{zweimuller}  we have that there is an ergodic $g$-invariant measure $\mu_g$ absolutely continuous with respect to $\mu$. Moreover, if $G_k$ is the subset of points $\ud x\in G$ such that $n_1(\ud x)=k$ then, we may characterize this measure defining 

\begin{equation}\label{eq.induzvolta}
\mu(E) = \sum_{n=0}^{\infty} \sum_{k>n} \mu_g(\sigma^{-n}(E)\cap G_k),
\end{equation}
for every measurable set $E \subset \Sigma_1$. Thus, by Birkhoff Ergodic Theorem applied to the system $(g,\mu_g)$, we have that for $\mu_g$ almost everywhere $\ud x$
\begin{equation}
\lim\limits_{k\rightarrow \infty} \frac{1}{k}\sum\limits_{j=0}^{k-1} n_1(g^j(\ud x)) = \int n_1 \,d\mu_g.
\end{equation}

Observe that $n_k(\ud x) = \sum\limits_{j=0}^{k-1} n_1(g^j(\ud x))$. Consequently,  we have that

\begin{equation}\label{eq.nki}
\lim\limits_{k\rightarrow \infty} \frac{n_k(\ud x)}{k}=\lim\limits_{k\rightarrow \infty} \frac{1}{k}\sum\limits_{j=0}^{k-1} n_1(g^j(\ud x)) =\int n_1 \,d\mu_g,
\end{equation} and this finish the proof.

\end{proof}

\section{Conformal Measures and Weak Gibbs Measures}
 We observe that eigen-measures for the Ruelle-Perrón-Frobenius operator are  natural candidates for sequential Gibbs measures.  To recall,  denote by $\mathcal{C}(\Sigma)$ the set of real-valued continuous functions on $\Sigma$. The Ruelle-Perron-Frobenius operator $\mathcal{L}_{\psi}:\mathcal{C}(\Sigma)\rightarrow \mathcal{C}(\Sigma)$ associated to $\psi \in \mathcal{C}(\Sigma)$ is defined by
 
 \[\mathcal{L}_{\psi}\phi(\ud x):=\sum_{\ud y \in \sigma^{-1}(\ud x)}e^{\psi(\ud y)}\phi(\ud y).\] 
 Observe, that for $n\in \mathbb{N}$
 
 \[\mathcal{L}_{\psi}^n\phi(\ud x)=\sum_{\ud y \in \sigma^{-n}(\ud x)}e^{\psi^n(\ud y)}\phi(\ud y)\]
 
 This operator is \emph{positive}, i.e.,  preserves the cone of positive functions $C(\Sigma)^+$.  Therefore, we may restrict the dual operator $\mathcal{L}_{\psi}^{*}$  to the dual cone $(C(\Sigma)^+)^{*}$. If we identify the cone $(C(\Sigma)^+)^{*}$ with the space of positive finite measures $\mathcal{M}(\Sigma)$ by Riesz Theorem, the operator  $\mathcal{L}_{\psi}^{*}$ is defined by:
 
 \begin{eqnarray*}
 	\mathcal{L}_{\psi}^{*}:\mathcal{M}(\Sigma)&\rightarrow&  \mathcal{M}(\Sigma)\\
 	\mu&\mapsto& \mathcal{L}_{\psi}^{*}(\mu):\mathcal{C}(\Sigma)\rightarrow \mathbb{R}\\
 	&&\;\;\;\;\;\;\;\;\;\;\;\;\;\;\;\;\;\;\;\;\phi\mapsto \mathcal{L}_{\psi}^{*}(\mu)
 	(\phi)=\int_{\Sigma}\mathcal{L}_{\psi}(\phi)d\mu
 \end{eqnarray*}
 %
 %
 
 If $r(T)$ denotes the spectral radius of $T$, we have that $r(\mathcal{L}_{\psi})=r(\mathcal{L}_{\psi}^{*})$.  Since $C(\Sigma)^+$ is a normal cone with non-empty interior, the  spectral theory of positive operators on cones (see \cite{edmunds} or \cite{baladi-book}, for instance) give us that
 that $r(\mathcal{L}_{\psi})$ is an eigenvalue of $\mathcal{L}_{\psi}^{*}$ with some eigenvector $\mu \in \mathcal{M}(\Sigma)$, i.e,  
 
 \[ \mathcal{L}_{\psi}^{*}\mu=r(\mathcal{L}_{\psi})\mu.\]  We say these measures  are \emph{conformal measures} with respect to $\psi$. A main feature of a conformal measure is the relation between the variation of $\psi$ and distortion properties:
 
 \begin{proposition}
 	Let $\mu \in \mathcal{M}(\Sigma)$ be a conformal measure for $\psi \in \mathcal{C}(\Sigma)$. Then for every
 	$n \in \mathbb{N}$  and $\ud y\in \sigma^{-j}([x_j...x_{n-1}])$, with $0\leq j <n$, we have that 
 	
 	\begin{equation}\label{eq.gibbsvar} e^{-var_{n-j}(\psi^{{n-j}},\,\sigma^j(\ud x))}\leq \frac{\mu([x_j...x_{n-1}])}{e^{\psi^{n-j}(\sigma^j(\ud y))-(n-j)P}}
 	\leq e^{var_{n-j}(\psi^{{n-j}},\,\sigma^j(\ud x))}, 
 	\end{equation} where $\log r(\mathcal{L}_\psi)= P$.
 	
 \end{proposition}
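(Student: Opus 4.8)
The plan is to exploit the eigen-measure relation $\mathcal{L}_{\psi}^{*}\mu = r(\mathcal{L}_\psi)\mu = e^{P}\mu$ to rewrite the measure of the cylinder as an integral of a Birkhoff exponential, and then to squeeze that integrand using the variation of $\psi^{n-j}$. Writing $m=n-j$ and iterating the defining identity $\int \mathcal{L}_\psi \phi\, d\mu = e^{P}\int \phi\, d\mu$ gives, for every $m\geq 1$ and every $\phi\in\mathcal{C}(\Sigma)$,
$$
\int \mathcal{L}_\psi^{m}\phi\, d\mu = e^{mP}\int \phi\, d\mu.
$$
Because cylinders are clopen in the product topology, the indicator $\mathbf{1}_{[x_j\dots x_{n-1}]}$ is itself a continuous function, so I may substitute $\phi=\mathbf{1}_{[x_j\dots x_{n-1}]}$ directly, with no approximation argument needed.

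The main step is to evaluate $\mathcal{L}_\psi^{m}\mathbf{1}_{[x_j\dots x_{n-1}]}$. On the full shift each point $\ud z$ has exactly one preimage under $\sigma^{m}$ lying in the cylinder $[x_j\dots x_{n-1}]$, namely the concatenation $\ud w:=x_j\dots x_{n-1}\ud z$. Hence
$$
\mathcal{L}_\psi^{m}\mathbf{1}_{[x_j\dots x_{n-1}]}(\ud z)=e^{\psi^{m}(\ud w)},
$$
and, after normalizing $\mu$ to be a probability measure (the eigen-measure is defined only up to scale), the displayed identity yields
$$
\mu([x_j\dots x_{n-1}]) = e^{-mP}\int e^{\psi^{m}(x_j\dots x_{n-1}\ud z)}\, d\mu(\ud z).
$$

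It then remains to apply the variation estimate. For every $\ud z$ the point $x_j\dots x_{n-1}\ud z$ lies in $[x_j\dots x_{n-1}]$, which is precisely the cylinder of length $m$ at $\sigma^{j}(\ud x)$; and since $\ud y\in\sigma^{-j}([x_j\dots x_{n-1}])$, the reference point $\sigma^{j}(\ud y)$ lies in the same cylinder. Therefore $|\psi^{m}(x_j\dots x_{n-1}\ud z)-\psi^{m}(\sigma^{j}(\ud y))|\leq var_{m}(\psi^{m},\sigma^{j}(\ud x))$, so the integrand is trapped between $e^{\psi^{m}(\sigma^{j}(\ud y))- var_{m}(\psi^{m},\sigma^{j}(\ud x))}$ and $e^{\psi^{m}(\sigma^{j}(\ud y))+ var_{m}(\psi^{m},\sigma^{j}(\ud x))}$. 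Integrating against the probability measure $\mu$ and dividing by $e^{\psi^{m}(\sigma^{j}(\ud y))-mP}$ produces exactly the two-sided bound of Equation~\eqref{eq.gibbsvar}.

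The computation is essentially forced once the right objects are in place, so there is no serious analytic obstacle; the point that needs genuine care is the bookkeeping. Specifically, one must confirm that on the full shift $\mathcal{L}_\psi^{m}$ selects the \emph{single} admissible preimage $x_j\dots x_{n-1}\ud z$ of each $\ud z$, and that the cylinder $[x_j\dots x_{n-1}]$ is simultaneously the length-$m$ cylinder of $\sigma^{j}(\ud x)$ and of $\sigma^{j}(\ud y)$, so that $var_{m}(\psi^{m},\sigma^{j}(\ud x))$ is indeed the correct common modulus. I would also record the normalization $\mu(\Sigma)=1$ explicitly, so that the total mass does not leak into the estimate as a spurious constant.
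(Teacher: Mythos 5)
Your proof is correct and follows essentially the same route as the paper's: iterate the eigen-measure identity $\mathcal{L}_{\psi}^{*}\mu=e^{P}\mu$ on the indicator of the cylinder, observe that on the full shift exactly one $\sigma^{m}$-preimage of each point lands in $[x_j\dots x_{n-1}]$, and squeeze the resulting integrand by the variation of $\psi^{m}$ over that cylinder. Your version is slightly more explicit about the single-preimage evaluation and the normalization $\mu(\Sigma)=1$, but the computation is the same.
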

 \begin{proof}
 	For all $[x_j...x_{n-1}]$ and $\ud y\in \sigma^{-j}([x_j...x_{n-1}])$, with $0\leq j<n$, we have
 	\begin{eqnarray*}
 		\mu([x_j...x_{n-1}])&=&\int 1_{[x_j...x_{n-1}]}d\mu
 		=\lambda^{-(n-j)}\int \mathcal{L}_{\psi}^{n-j}1_{[x_j...x_{n-1}]}d\mu\\
 		&=&\lambda^{-(n-j)}\int \sum_{\sigma^j(\ud y) \in \sigma^{-(n-j)}(\ud z)}1_{[x_j...x_{n-1}]}(\sigma^j(\ud y)) e^{\psi^{n-j}(\sigma^j(\ud y))}d\mu(\ud z)\\
 		&\leq& \lambda^{-(n-j)}e^{var_{n-j}(\psi^{n-j},\,\sigma^j(\ud x))}e^{\psi^{n-j}(\sigma^j(\ud x))}
 	\end{eqnarray*}
 	
 	similarly
 	\[\lambda^{-(n-j)}e^{-var_{n-j}(\psi^{n-j},\,\sigma^j(\ud x))}e^{\psi^{n-j}\sigma^j(\ud x))}
 	\leq \mu([x_j...x_{n-1}])\]
 	
 	Then,
 	
 	\[e^{-var_{n-j}(\psi^{{n-j}},\,\sigma^j(\ud x))}\leq \frac{\mu([x_j...x_{n-1}])}{e^{\psi^{n-j}(\sigma^j(\ud x))-(n-j)P}}
 	\leq e^{var_{n-j}(\psi^{{n-j}},\,\sigma^j(\ud x))}.
 	\]
 \end{proof}

 To analyze sufficient conditions such that a conformal measure is sequential Gibbs measure, we  define  the sequence of functions  $\xi_n(\ud x)$ at  $\ud x = x_0 x_1...$ by
 
 $$\xi_n(\ud x)=\sup_{\ud y \in [x_{0}...x_{n-1}]}\left\{\sum_{i=0}
 ^{n-1}|\psi(\sigma^{j}(\ud x)-\psi(\sigma^{j}(\ud y)|\right\}.$$ 
 
 Now, we introduce an useful proposition to allow us to check that a conformal measure is sequential Gibbs. This proposition is used in Example \ref{ex.hofbauer}.
 \begin{proposition}\label{prop corformexsgibbs} 
 	Given a conformal measure $\mu$ such that  $\liminf_{n\rightarrow \infty} \xi_n(\ud x)\leq C$ at $\mu$-a.e $\ud x \in \Sigma$, for some constant $C>0$,  then  $\mu$ is a sequential Gibbs measure.
 \end{proposition}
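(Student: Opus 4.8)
The plan is to combine the distortion estimate for conformal measures in Equation~\eqref{eq.gibbsvar} with the hypothesis on $\xi_n$. The crux is a single observation: one bound on $\xi_n(\ud x)$ controls \emph{all} the quantities $var_{n-j}(\psi^{n-j},\sigma^j(\ud x))$ simultaneously for $0\leq j\leq n-1$, and this uniformity in $j$ is exactly what the sequential Gibbs property in Equation~\eqref{eq.1gibbs} demands.

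First I would establish the key estimate
\[
var_{n-j}(\psi^{n-j},\sigma^j(\ud x)) \leq \xi_n(\ud x), \qquad 0\leq j\leq n-1.
\]
Fix $\ud w$ in the cylinder $[x_j\dots x_{n-1}]$ and associate to it the point $\ud y=x_0\dots x_{j-1}w_0w_1\dots$ obtained by prepending the first $j$ symbols of $\ud x$ to $\ud w$. Then $\sigma^j(\ud y)=\ud w$ and $\ud y\in[x_0\dots x_{n-1}]$, so by telescoping
\[
|\psi^{n-j}(\sigma^j(\ud x))-\psi^{n-j}(\ud w)|=\left|\sum_{m=j}^{n-1}\big(\psi(\sigma^m(\ud x))-\psi(\sigma^m(\ud y))\big)\right|\leq \sum_{m=0}^{n-1}|\psi(\sigma^m(\ud x))-\psi(\sigma^m(\ud y))|\leq \xi_n(\ud x).
\]
Taking the supremum over $\ud w\in[x_j\dots x_{n-1}]$ (equivalently over the associated points $\ud y$, all of which lie in $[x_0\dots x_{n-1}]$) yields the claim.

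Next I would exploit the hypothesis. Since $\liminf_{n}\xi_n(\ud x)\leq C$ for $\mu$-a.e. $\ud x$, for each such $\ud x$ there are infinitely many indices $n$ with $\xi_n(\ud x)<C+1$; let $n_i(\ud x)$ be their increasing enumeration. By the key estimate, for every $0\leq j\leq n_i-1$ we have $var_{n_i-j}(\psi^{n_i-j},\sigma^j(\ud x))\leq \xi_{n_i}(\ud x)<C+1$.

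Finally I would insert this into Equation~\eqref{eq.gibbsvar}, taking $\ud y=\ud x$ there, $n=n_i$, and $P=\log r(\mathcal{L}_\psi)$, to obtain
\[
e^{-(C+1)}\leq \frac{\mu([x_j\dots x_{n_i-1}])}{e^{\psi^{n_i-j}(\sigma^j(\ud x))-(n_i-j)P}}\leq e^{C+1}
\]
for every $0\leq j\leq n_i-1$. This is precisely Equation~\eqref{eq.1gibbs} with constant $K=e^{C+1}$, pressure $P$, and Gibbs times $n_i(\ud x)$, so $\mu$ is a sequential Gibbs measure. The only genuinely nontrivial point is the uniform estimate; extracting the Gibbs times from the $\liminf$ hypothesis and substituting into the conformal distortion bound are routine.
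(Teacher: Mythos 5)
Your proposal is correct and follows essentially the same route as the paper: both reduce the problem to the uniform bound $var_{n-j}(\psi^{n-j},\sigma^j(\ud x))\leq \xi_n(\ud x)$ (the paper via the intermediate inequality $\xi_{n-j}(\sigma^j(\ud x))\leq \xi_n(\ud x)$, you via the same telescoping done directly), extract the Gibbs times from the $\liminf$ hypothesis, and substitute into Equation~\eqref{eq.gibbsvar}. Your replacement of $C$ by $C+1$ when enumerating the indices is in fact slightly more careful than the paper's wording, which asserts $\xi_{n_i}(\ud x)\leq C$ directly from $\liminf\xi_n(\ud x)\leq C$.
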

 \begin{proof}

 	Observe that   $var_n(\psi^n,\ud x)\leq \xi_n(\ud x)$ and 
 	$\xi_{n-j}(\sigma^j(\ud x))\leq \xi_n(\ud x)$, for each $0\leq j\leq n$. Then,
 	
 	\[var_{n-j}(\psi^{n-j},\,\sigma^{j}(\ud x))\leq \xi_{n-j}(\sigma^j(\ud x))\leq \xi_n(\ud x)\]
 	
 	By hypothesis, for almost every $\ud x\in \Sigma$ there is a sequence $n_i(\ud x)$ such that $\xi_{n_i}(\ud x)\leq C$. Thus,  by  Equation~\eqref{eq.gibbsvar},  we have for any $n_i(\ud x)$
 	\begin{equation}\label{eq.varx}
 	e^{-C} \leq e^{-var_{n_i-j}(\psi^{{n-j}},\,\sigma^j(\ud x))}\leq \frac{\mu([x_j...x_{n_i-1}])}{e^{\psi^{n_i-j}(\sigma^j(\ud y)-(n_i-j)P}}
 	\leq e^{var_{n_i-j}(\psi^{{n_i-j}},\,\sigma^j(\ud x))} \leq e^C. 
 	\end{equation}
 	And this finish the proof. 
 \end{proof}
 
We discuss some conditions on the sequence $(\xi_n)_{n\geq 1}$ that give us more information about conformal sequential Gibbs measures. The results here will not be used elsewhere in this paper and are included to help to clarify the relation between our notion of sequential Gibbs measures and weak forms of Gibbs measures studied before.  We begin recalling the notion of weak Gibbs measure for continuous potentials, studied by Yuri in \cite{yuri_1999}:

\begin{definition} \label{def.yuri}
	A measure $\mu$ is a \textit{weak Gibbs} measure for the potential  $\psi:\Sigma\rightarrow \bb R$ if there is a constant $P$ and  a sequence of positive numbers  $K_n$ satisfying
\begin{eqnarray}\label{eq seq weak gibbs}
\lim_{n\rightarrow \infty}\frac{\log K_n}{n}=0,
\end{eqnarray}
such that  for each $n\in \bb N$, $\ud x = x_0x_1...$ and $\ud y\in [x_0...x_{n-1}]$ we have
	
	\begin{equation}\label{eq weak gibbs yuri}
	\frac{1}{K_n}\leq \frac{\mu([x_0...x_{n-1}])}{e^{\psi^n(\ud y)-nP}}\leq K_n
	\end{equation}
	
Following \cite{yuri_1999}, we say that	$\psi$ is of \textit{weak bounded variation}(WBV) if there exists a sequence of positive numbers  $K_n$ satisfying (\ref{eq seq weak gibbs})\, such that
	\begin{eqnarray*}
	\sup_{[x_0...x_{n-1}]\in \mathcal{C}_n}\;\sup_{\ud y, \ud w \in [x_0...x_{n-1}]}\frac{e^{\psi^n(\ud y)}}{e^{\psi^n(\ud w)}}\leq K_n
	\end{eqnarray*}
where $\mathcal{C}_n$ the collection of all cylinders $C=[x_0\dots x_{n-1}]$ of length $n$.
	\end{definition}
	
	In \cite{yuri_1999}, the author discussed some examples of nonuniformly hyperbolic maps with potentials with an unique equilibrium measure that fails to be a Gibbs measure, but has the weak Gibbs property. Now, we establish, as in \cite{yuri_1999}, the relation between sequential and weak Gibbs conformal measures, using the sequence $\xi_n$:
	\begin{proposition}
	If \, $\lim_{n\rightarrow \infty} (1/n)\|\xi_n\|_\infty=0$, then any conformal measure is a weak Gibbs measure and $\psi$ is WBV.
\end{proposition}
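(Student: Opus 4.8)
The plan is to produce a single sequence $K_n$ that serves both conclusions at once, namely $K_n = e^{\|\xi_n\|_\infty}$, and then verify the two defining inequalities against it. The hypothesis $\lim_{n\rightarrow\infty}(1/n)\|\xi_n\|_\infty = 0$ is precisely the assertion that $\lim_{n\rightarrow\infty}(\log K_n)/n = 0$, so the subexponential growth requirement \eqref{eq seq weak gibbs} is immediate once the comparison estimates are in hand. The whole argument thus reduces to bounding the variation quantities appearing in the conformal estimate by the uniform quantity $\|\xi_n\|_\infty$.

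First I would establish the weak Gibbs property. I start from the conformal estimate \eqref{eq.gibbsvar} specialized to $j=0$, which for any $\ud y \in [x_0\dots x_{n-1}]$ reads
$$
e^{-var_n(\psi^n,\ud x)} \le \frac{\mu([x_0\dots x_{n-1}])}{e^{\psi^n(\ud y)-nP}} \le e^{var_n(\psi^n,\ud x)},
$$
with $P = \log r(\mathcal{L}_\psi)$. Using the elementary bound $var_n(\psi^n,\ud x) \le \xi_n(\ud x) \le \|\xi_n\|_\infty$ already observed in the proof of Proposition~\ref{prop corformexsgibbs}, both exponents are controlled uniformly in $\ud x$ by $\pm\|\xi_n\|_\infty$. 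This yields exactly $K_n^{-1} \le \mu([x_0\dots x_{n-1}])/e^{\psi^n(\ud y)-nP} \le K_n$ with $K_n = e^{\|\xi_n\|_\infty}$, which is \eqref{eq weak gibbs yuri}, and the growth condition follows from the hypothesis.

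Next I would check the WBV property with the very same $K_n$. For $\ud y,\ud w$ in a common cylinder $[x_0\dots x_{n-1}]$ I bound the Birkhoff sums directly: since the coordinates of $\ud y$ and $\ud w$ agree up to index $n-1$, each point lies in the cylinder determined by the other, so
$$
|\psi^n(\ud y)-\psi^n(\ud w)| \le \sum_{j=0}^{n-1}|\psi(\sigma^j\ud y)-\psi(\sigma^j\ud w)| \le \xi_n(\ud w) \le \|\xi_n\|_\infty.
$$
Exponentiating gives $e^{\psi^n(\ud y)}/e^{\psi^n(\ud w)} \le e^{\|\xi_n\|_\infty} = K_n$, the defining inequality of WBV.

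I do not expect a serious obstacle here: both conclusions collapse to the comparison of the distortion terms in \eqref{eq.gibbsvar} with $\|\xi_n\|_\infty$, and the growth hypothesis converts mechanically into \eqref{eq seq weak gibbs}. The only point demanding a moment of care is bookkeeping on the cylinders, that is, ensuring that $var_n(\psi^n,\ud x)\le\xi_n(\ud x)$ and the direct estimate for $|\psi^n(\ud y)-\psi^n(\ud w)|$ are invoked with a reference point whose length-$n$ cylinder genuinely contains the competing point; choosing the reference to be one of $\ud y,\ud w$ settles this at once.
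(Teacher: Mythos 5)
Your proof is correct and follows essentially the same route as the paper: both choose $K_n = e^{\|\xi_n\|_\infty}$ and feed the bound $var_n(\psi^n,\cdot)\leq \|\xi_n\|_\infty$ into the conformal estimate \eqref{eq.gibbsvar} at $j=0$, with the growth hypothesis then giving \eqref{eq seq weak gibbs} immediately. The only (harmless) difference is in the WBV step, where you bound $e^{\psi^n(\ud y)}/e^{\psi^n(\ud w)}$ directly by $K_n$ from the definition of $\xi_n$, whereas the paper chains the two weak-Gibbs inequalities through $\mu([x_0\dots x_{n-1}])$ and obtains the constant $K_n^2$; either choice satisfies the required subexponential condition.
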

\begin{proof}
	Following the steps of the proof of the Proposition \ref{prop corformexsgibbs} and observing the Equation~\eqref{eq.varx}, we have that
	\begin{eqnarray*}
		e^{-\xi_n(\ud x)} &\leq&e^{-var_{n}(\psi^{{n}},\,\ud y)}\leq \frac{\nu([x_0,...,x_{n-1}])}{e^{\psi^{n}(\ud y)-nP}}\\
		&\leq& e^{var_{n}(\psi^{{n}},\,\ud y)}\leq e^{\xi_n(\ud x)}
	\end{eqnarray*}
	for every $\ud y\in [x_0...x_{n-1}]$. Then, put $K_n:=e^{\|\xi_n\|_\infty}$ we have for $\ud y,\ud w \in [x_0...x_{n-1}]$ 
	
	\[\frac{e^{\psi^n(\ud y)}}{e^{\psi^n(\ud w)}}\leq K_n^2\]
To end the proof of the proposition just take the supremum.
	
\end{proof}

Now, we discuss the notion of \emph{non-lacunary Gibbs measure}, studied in \cite{OV}. We say that a sequence of natural numbers  $a_1<a_2<...$ is \emph{non-lacunary}, if $\lim_{i\rightarrow \infty} a_{i+1}/a_i =0$. 

A non-lacunary Gibbs measure is a sequential Gibbs measure such that the sequence $n_i(\ud x)$ is \emph{non-lacunary} at almost every point $\ud x \in \Sigma$.  The proof of next lemma follows, mutatis mutandis, from the proof of Proposition~$3.8$ of \cite{OV}. The key property used here is the $n_1(\sigma^{n_i}(\ud x))=n_{i+1}(\ud x)-n_i(\ud x)$. We will include it here for the sake of completeness. 

For $m\geq 1$, let $G_m=\{\ud x \in \Sigma: n_1(\ud x)=m\}$.
\begin{lemma} \label{l.nonlacunary}
	Let $\mu$ be a invariant sequential Gibbs measure such that the function $n_1$ is integrable. Then, for almost every $\ud x \in \Sigma$, the sequence $n_i(\ud x)$ is non-lacunary. 
	\end{lemma}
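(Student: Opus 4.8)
The plan is to pass to the induced first-Gibbs-time system, run a Birkhoff-average argument there, and then push the resulting full-measure statement back down to $\mu$. Throughout I will show the genuine non-lacunarity condition $n_{i+1}(\ud x)/n_i(\ud x)\to 1$.

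First I would recall the induced map $g(\ud x)=\sigma^{n_1(\ud x)}(\ud x)$ on $G$. Since $n_1$ is integrable, \cite{zweimuller} provides an ergodic $g$-invariant probability $\mu_g\ll\mu$ together with the reconstruction formula \eqref{eq.induzvolta}. Taking $E=\Sigma_1$ in \eqref{eq.induzvolta} yields the Kac-type identity $\int n_1\,d\mu_g=\sum_{k}k\,\mu_g(G_k)=\mu(\Sigma_1)=1$, so in particular $n_1\in L^1(\mu_g)$ and $c:=\int n_1\,d\mu_g\in(0,\infty)$. (If $\mu$ is only invariant rather than ergodic, I would argue on each ergodic component.) Using the key identity $n_k(\ud x)=\sum_{j=0}^{k-1}n_1(g^j(\ud x))$, the Birkhoff ergodic theorem for $(g,\mu_g)$ gives $n_k(\ud x)/k\to c$ for $\mu_g$-a.e. $\ud x$.

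From this I would extract the gap estimate
$$\frac{n_1(g^i(\ud x))}{i}=\frac{n_{i+1}(\ud x)-n_i(\ud x)}{i}=\frac{i+1}{i}\cdot\frac{n_{i+1}(\ud x)}{i+1}-\frac{n_i(\ud x)}{i}\longrightarrow c-c=0$$
(equivalently, this follows from Borel–Cantelli, since $\sum_i\mu_g(\{n_1>\varepsilon i\})\le\varepsilon^{-1}\int n_1\,d\mu_g<\infty$). Combining with $n_i(\ud x)/i\to c>0$,
$$\frac{n_{i+1}(\ud x)}{n_i(\ud x)}=1+\frac{n_1(g^i(\ud x))}{n_i(\ud x)}=1+\frac{n_1(g^i(\ud x))/i}{n_i(\ud x)/i}\longrightarrow 1,$$
so non-lacunarity holds on a $\mu_g$-full set $B\subset G$.

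Finally I would transfer the conclusion from $\mu_g$-a.e. to $\mu$-a.e. through \eqref{eq.induzvolta}. Writing $A$ for the non-lacunary set, I want $\mu(A^c)=\sum_{n}\sum_{k>n}\mu_g(\sigma^{-n}(A^c)\cap G_k)=0$, for which it suffices that $\sigma^n(\ud y)\in A$ whenever $\ud y\in B\cap G_k$ and $0\le n<k$. This is exactly where the shift relation for Gibbs times enters: since every Gibbs time satisfies $n_i(\ud y)\ge n_1(\ud y)=k>n$, each $n_i(\ud y)-n$ is a Gibbs time of $\sigma^n(\ud y)$, so the Gibbs-time sequence of $\sigma^n(\ud y)$ contains the subsequence $\{n_i(\ud y)-n\}_i$. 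A squeeze argument then finishes: for consecutive Gibbs times $M_j<M_{j+1}$ of $\sigma^n(\ud y)$, choosing $i$ with $n_i(\ud y)-n\le M_j<M_{j+1}\le n_{i+1}(\ud y)-n$ gives $1\le M_{j+1}/M_j\le(n_{i+1}(\ud y)-n)/(n_i(\ud y)-n)\to 1$, so $\sigma^n(\ud y)$ is non-lacunary. I expect this last step to be the main obstacle: one must ensure the a.e. statement produced on the induced system survives the push-down to $\mu$, i.e. that non-lacunarity is inherited by the intermediate iterates $\sigma^n(\ud y)$ for $0\le n<n_1(\ud y)$ despite the possible appearance of extra Gibbs times. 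The squeeze above absorbs the extra times, and \eqref{eq.induzvolta} then assembles a $\mu$-full set.
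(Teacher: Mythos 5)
Your proof is correct, but it takes a genuinely different route from the paper's. The paper argues directly on the original system: it sets $L_r(n)=\{\ud x : n_1(\ud x)\geq rn\}$, observes that failure of non-lacunarity at $\ud x$ forces $\ud x\in\sigma^{-n_i}(L_r(n_i))$ for some rational $r>0$ and infinitely many $i$ (via $n_1(\sigma^{n_i}(\ud x))=n_{i+1}(\ud x)-n_i(\ud x)\geq r\,n_i(\ud x)$), and then kills the exceptional set by Borel--Cantelli, since $\mu(\sigma^{-n}(L_r(n)))=\mu(L_r(n))$ by invariance and $\sum_n\mu(L_r(n))\leq r^{-1}\int n_1\,d\mu<\infty$. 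That argument uses only $\sigma$-invariance and integrability of $n_1$ --- no ergodicity, no induced system, no Birkhoff theorem. Your route instead passes through the induced map $g=\sigma^{n_1}$, Zweim\"uller's theorem and the Birkhoff average $n_k/k\to\int n_1\,d\mu_g$; this is essentially the paper's earlier proposition on linear growth of $n_k$, from which you deduce non-lacunarity $\mu_g$-a.e.\ and then push it back to $\mu$ through the reconstruction formula \eqref{eq.induzvolta}. What your approach buys is the stronger conclusion $n_k(\ud x)/k\to c$; what it costs is the ergodicity hypothesis (patched by decomposing into ergodic components, where one should still note that $n_1\in L^1$ of a.e.\ component) and the transfer step, which you rightly single out as the main obstacle and resolve correctly with the squeeze argument absorbing any extra Gibbs times of $\sigma^n(\ud y)$ between the shifted ones. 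Both proofs lean on the same identity $n_1(\sigma^{n_i}(\ud x))=n_{i+1}(\ud x)-n_i(\ud x)$, which the paper itself flags as ``the key property'' without proof, so you are not worse off there.
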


\begin{proof}
Let $D\subset \Sigma$ be the set of points which the sequence $n_j(\cdot)$ fails to be non-lacunary. For each $r>0$, define $L_r(n)=\{\ud x \in \Sigma:n_1(\ud x)\geq r n\}$. If $\ud x \in D$ then there exists a rational number $r>0$, and there are infinitely many values of $i$ such that $n_{i+1}(\ud x)-n_i(\ud x)\geq r n_{i}(\ud x)$. Then,

$$
n_1(\sigma^{n_i}(\ud x))=n_{i+1}(\ud x)-n_i(\ud x)\geq r n_i(\ud x).
$$
So, there are arbitrarily large values of $n$ such that $\ud x \in \sigma^{-n}(L_r(n))$. In the words, $D$ is contained in the set
$$
L=\bigcup_{r\in \bb Q\,\cap (0,+\infty)}\bigcap_{k= 0}^{\infty}\bigcup_{n\geq k}\sigma^{-n}(L_r(n)).
$$
By invariance of $\mu$, we have for all $n$ $\mu(\sigma^{-n}(L_r(n)))=\mu(L_r(n))$. Then
$$
\sum_{n=1}^{\infty}\mu(L_r(n))=\sum_{n=1}^{\infty}\sum_{n_1\geq rn}\mu(G_{n_1})=\sum_{m=1}^{\infty}\sum_{n=1}^{[m/r]}\mu(G_m)\leq \sum_{m=1}^{\infty}(m/r)\mu(G_m).
$$
Using that $n_1(\cdot)$ is integrable, we have
$$
\sum_{n=1}^{\infty}\mu(L_r(n))\leq \frac{1}{r}\int n_1(\ud x) d\mu(\ud x)<\infty.
$$
By the Borel-Cantelli lemma, this implies that $L$ has measure zero. It follows that $\mu(D)=\mu(L)=0$, as claimed.

\end{proof}

In view of Lemma~\ref{l.nonlacunary} and following the proof of Proposition~$3.17$ of \cite{OV}, we are able to show that: 	
	
	\begin{proposition}\label{p.gibbsnaolacunar}
	If $\mu$ is sequential Gibbs measure and the function $n_1$ is integrable, then exist a sequence of positive functions
	$K_n>1$  such that $\mu$-a.e. $\ud x$ and for all  $n\in \mathbb{N}$, we have
	\begin{eqnarray}\label{weak gibbs} 
	K_n^{-1}(\ud x)\leq \frac{\mu([x_0...x_{n-1}])}{e^{\phi^n(\ud y)-nP}}\leq K_n(\ud x)
	\end{eqnarray}
	 and  $\limsup_{n\rightarrow \infty}\frac{\log K_n(\ud x)}{n}=0$.
\end{proposition}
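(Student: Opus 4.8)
The plan is to upgrade the sequential Gibbs estimate \eqref{eq.1gibbs}, which a priori holds only along the Gibbs times $n_i(\ud x)$, to an estimate valid for \emph{every} $n$, at the price of replacing the uniform constant $K$ by a point-dependent sequence $K_n(\ud x)$. The decisive ingredient is Lemma~\ref{l.nonlacunary}: since $n_1$ is integrable, for $\mu$-a.e. $\ud x$ the sequence of Gibbs times is non-lacunary, i.e. $n_{i+1}(\ud x)/n_i(\ud x)\to 1$. This is exactly what will force $K_n(\ud x)$ to be subexponential. As allowed by the normalization following Proposition~\ref{p.pressaoG}, I take $P=0$, and I work on the full-measure set $G$ of points with infinitely many Gibbs times.

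First I fix such an $\ud x$ and an $n\ge n_1(\ud x)$, and let $i=i(n,\ud x)$ be the index with $n_i(\ud x)\le n<n_{i+1}(\ud x)$. Because cylinders are nested,
$$
[x_0\dots x_{n_{i+1}-1}]\subset[x_0\dots x_{n-1}]\subset[x_0\dots x_{n_i-1}],
$$
monotonicity of $\mu$ together with the Gibbs bound \eqref{eq.1gibbs} applied with $j=0$ at the two Gibbs times $n_i$ and $n_{i+1}$ gives
$$
K^{-1}e^{\phi^{n_{i+1}}(\ud x)}\le \mu([x_0\dots x_{n-1}])\le K\,e^{\phi^{n_i}(\ud x)}.
$$

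Next I compare the Birkhoff sums $\phi^{n_i}(\ud x)$ and $\phi^{n_{i+1}}(\ud x)$ with $\phi^{n}(\ud x)$. Each of them differs from $\phi^n(\ud x)$ by a sum of at most $n_{i+1}-n_i$ terms $\phi(\sigma^l(\ud x))$, so, $\phi$ being continuous on the compact space $\Sigma_1$ and hence bounded, the difference is at most $(n_{i+1}-n_i)\|\phi\|_\infty$ in absolute value. Dividing the previous display by $e^{\phi^n(\ud x)}$ yields
$$
K^{-1}e^{-(n_{i+1}-n_i)\|\phi\|_\infty}\le \frac{\mu([x_0\dots x_{n-1}])}{e^{\phi^n(\ud x)}}\le K\,e^{(n_{i+1}-n_i)\|\phi\|_\infty}.
$$
If one wants the estimate for an arbitrary $\ud y\in[x_0\dots x_{n-1}]$ in place of $\ud x$, as in \eqref{weak gibbs}, it costs a further factor $e^{\pm\sum_{m=1}^{n}var_m(\phi)}$, which is again subexponential because $var_m(\phi)\to0$ forces $\tfrac1n\sum_{m=1}^{n}var_m(\phi)\to0$. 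I therefore set $K_n(\ud x):=K\,e^{(n_{i+1}-n_i)\|\phi\|_\infty}$, enlarging it harmlessly for the finitely many $n<n_1(\ud x)$ to keep $K_n>1$; this establishes \eqref{weak gibbs}.

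It remains to verify $\limsup_n(\log K_n(\ud x))/n=0$, the only step with genuine content. Writing
$$
\frac{\log K_n(\ud x)}{n}=\frac{\log K}{n}+\frac{(n_{i+1}-n_i)\,\|\phi\|_\infty}{n},
$$
and using $n\ge n_i$, it suffices to show $(n_{i+1}-n_i)/n_i=n_{i+1}/n_i-1\to0$. Since $i\to\infty$ as $n\to\infty$, this is precisely the non-lacunarity supplied by Lemma~\ref{l.nonlacunary}. The main (and essentially only) obstacle is thus controlling the gap $n_{i+1}-n_i=o(n)$, and it is the integrability of $n_1$, channelled through Lemma~\ref{l.nonlacunary}, that delivers it; everything else is monotonicity of $\mu$ and the boundedness of $\phi$.
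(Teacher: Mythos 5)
Your argument is correct and is exactly the route the paper intends: the paper gives no written proof of this proposition, only pointing to Lemma~\ref{l.nonlacunary} and Proposition~3.17 of \cite{OV}, and your interpolation between consecutive Gibbs times $n_i\le n<n_{i+1}$ via nestedness of cylinders, boundedness of $\phi$, and non-lacunarity of the Gibbs times is precisely that argument. The only bookkeeping slip is that your final $K_n(\ud x)$ should also absorb the factor $e^{\sum_{m=1}^{n}var_m(\phi)}$ needed to pass from $\ud x$ to an arbitrary $\ud y\in[x_0\dots x_{n-1}]$, which, as you note, is harmless since uniform continuity of $\phi$ makes it subexponential.
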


\begin{remark} We observe that given a conformal sequential Gibbs measure, it is always possible to choose $K_n(x)$ as in Proposition~\ref{p.gibbsnaolacunar}  in such way that $(1/n)\log K_n(\ud x)$ converges almost everywhere. 

Indeed, since $\xi_n$ is a  subadditive sequence of non-negative functions, we use the Ergodic Subadditive Theorem of  Kingman (see \cite{OV-book}, Theorem 3.3.3) to have that $(1/n) \xi_n$ converge almost everywhere. Then, take $K_n (x) :=e^{\xi_n(\ud x)}$ and observe that the Equation~\eqref{weak gibbs} is  satisfied.
\end{remark}

\section{Examples}\label{examples}

 In this section we discuss some examples of sequential Gibbs measures. The first one  was  introduced  in \cite{Hofbauer}, where Hofbauer gave an interesting example of a family of continuous potentials with phase transitions  and equilibrium states that are not a Gibbs measures. We reproduce this example and show that, despite the fact that they not satisfy the Gibbs property,   these measure are sequential Gibbs measure with integrable first Gibbs time function.

\begin{example}\label{ex.hofbauer}
	
	For simplicity, let $\Sigma_2^{+}=\{1,0\}^\mathbb{N}$ be an one-sided shift space with two symbols.  Consider the partition of $\Sigma_2^+$ by sets $(M_k)_{k\geq 0}$ plus the point $\ud 1= 1111..$ where $M_k$ is defined by
	$M_0=[0]$ and for $k=1,2,...$ 
	$$M_k=[\underbrace{11..1}_{k \text{ times}}0]=\{\ud x\in \Sigma_2^{+}:x_i=1 \,\,\mbox{for}\,\,
	0\leq i< k-1,\, x_k= 0\}.
	$$
	
	Let $(a_k)$ be a sequence of real numbers with $\lim a_k=0$. Set $s_k=a_0+...+a_k$. Define a continuous potential  $g\in C(\Sigma_2^{+})$ by
	
	\[g(\ud x)=a_k\,\,\mbox{for}\,\, \ud x\in M_k\,\, \mbox{and}\,\, g(11...)=0.\]
\end{example}

As was pointed out at Section~\ref{s.results},  there exists some conformal measure $\nu$ with respect to $g$. By results in (\cite{Hofbauer}, page 230), $g$ admits a Gibbs measure if, and only if, $\sum_{k\geq 0} a_k$ is convergent. Assume that $g$ has no Gibbs measures, i.e.,  $\sum_{k\geq 0} a_k$ diverges. 

If $\sum_{k\geq 0} e^{s_k}>1$, by (\cite{Hofbauer}, page 226) we have  that  there exists some positive continuous function $h$ such that  $\mu=h\nu$ is the unique equilibrium state of $g$. We prove that in this case, despite the fact that it do not satisfy the Gibbs property,   $\mu$ is a sequential Gibbs measure. 

Indeed, since $\mu=h \nu$ and $h$ is bounded from above and below,  it follows from the fact  that $\nu$ is a sequential Gibbs measure. As we observed before in Proposition~\ref{prop corformexsgibbs},  it is enough to show that $\liminf \xi_n(\ud x)$ is bounded at $\nu$ almost everywhere. In fact, we prove that $\liminf \xi_n(\ud x)=0$ at $\nu$ almost every point. 

Firstly, observe that from the definition, $\xi_{k+1}(\ud x)=0$, if $ \sigma^{k}(\ud x) \in M_0$.  On the other hand, since $\mu \neq \delta_{11...}$, we have that  for almost every point $\ud x$ there exists a sequence $k_1(\ud x)<k_2(\ud x)<\dots$ such that  $\sigma^{k_i}(\ud x) \in M_0$. In particular, $\xi_{k_i+1}(\ud x)=0$. 

From this,   we have that the first Gibbs time is integrable. Indeed,  using Proposition~\ref{prop corformexsgibbs}, we have that the first Gibbs time function $n_1$ of $\mu$ is smaller than the first return time to $M_0$. Then, by Kac's Lemma, we have that $n_1$ is integrable with respect to $\mu$.

In the next example we discuss  the non-lacunary Gibbs measures studied in \cite{OV,VV,ramosviana}. These measures are equilibrium states for H\"older \emph{hyperbolic} potentials  of some $C^1$ local diffeomorphisms on compact Riemannian manifolds and they have only positive Lyapunov exponents.

\begin{example}\label{ex.localdiffeos}  Let $f:M\rightarrow M$ be an $C^1$ local diffeomorphism  of a compact connected manifold $M$ such that there exists  sets $R_1 , . . . ,R_q$ of $M$ that are domains of injectivity of $f$   such that $\overline{R_i}\cap \overline{R_j} = \emptyset$, for $i \neq j$,   and  $f(R_i)=M$, for $ 1\leq i \leq q$. Consider $\mathcal{R}=R_1 \cup \dots \cup R_q$ and the invariant set $\Lambda = \cap_{n\geq 1}  f^{-n}(\mathcal{R})$. We may define a semiconjugacy 

$$
\pi:\Lambda \rightarrow \Sigma_q^+,
$$
 between $f|_\Lambda$ and $\sigma:\Sigma_q^+\rightarrow \Sigma_q^+$, where $\Sigma_q^+=\{1,\dots,q\}^\mathbb{N}$,  just considering $\pi(x)$ as the itinerary of $x$ with respect to the partition $\mathcal{P}=\{P_1,\dots,P_q\}$ of $\Lambda$, defined by $P_i=R_i \cap \Lambda$. Denote by $P(x)$ the element of $\mathcal{P}$ that contains  $x$ and assume that there exists $\sigma_1, \sigma_2>1$ such that:

\begin{itemize}

\item $\|Df(x)^{-1}\|<\sigma_2$, for every $x \in M$ and

\item $\|Df(x)^{-1}\|<\sigma_1^{-1}<1$, for $x$ in the complement of an open set containing $R_1$. 

\end{itemize}

In \cite{OV} and \cite{VV}, the authors proved that if $\phi$ is a H\"older continuous potential such that $\max \phi - \min \phi$ is small enough, then there exists an unique equilibrium state $\eta$ for $\phi$ and this measure has only positive Lyapunov exponents and it is  a non-lacunary Gibbs measure, in the sense that if we define  
$$
P^n(x)=P(x)\cap f^{-1}(P(f(x)))\cap \dots \cap f^{-(n-1)}(P(f^{n-1}(x))),
$$ then there exist a constant $K$, such that  for $\eta$  almost every $x \in \Lambda$   there exists a sequence $n_i(x)$ such that $\lim_{i\rightarrow \infty} n_{i+1}(x)/n_i(x)=1$ and   

\[
K^{-1}\leq \frac{\eta(P^{n_k}(x))}{e^{\phi^{n_i}(x)-n_i P(\phi) }}\leq K.
\] 

If we consider the push-forward measure $\mu=\pi_* \eta$ on $\Sigma_q^+$, then the map $\pi$ is invertible in a set of $\mu$-full measure and the measure $\mu$ is a sequential Gibbs measure with respect to the potential $\psi=\phi\circ \pi^{-1}$.
\end{example}

\section{Proof of Theorem~\ref{fs.teo proj}}
In this subsection we construct the potential $\psi_{2}$ as in Theorem~\ref{fs.teo proj},  obtained as the limit of a converging sequence of functions. 
Given $z\in E\subset  \Sigma_2$, as in the Definition \ref{fs hyp}, denote by $(n_{k}(\ud z))_{i\geq 1}$  the sequence of  Gibbs times of any $\underline{x} \in \Pi^{-1}(\underline{z})\cap \Sigma_1$. By Hypothesis \ref{fs hyp}, $n_k(\ud z)$ is well defined for a set of full $\nu$ measure.

\begin{definition}\label{fs.def3}
Given $k\in \mathbb{N}$ and $\underline{w}\in \Sigma_{1}$, define $u_{\underline{w},k}:E\subset \Sigma_{2}\rightarrow \mathbb{R}$ by

$$
u_{\underline{w},k}(\underline{z})= \frac{\sum_{\underline{x}=x_{0}...x_{n_{k}}} e^{\psi_{1}^{n_{k}+1}(\underline{x}\underline{w})}}{\sum_{\underline{x}'=
x_{1}...x_{n_{k}}} e^{\psi_{1}^{n_{k}}(\underline{x}'\underline{w})}},
$$
  where $\sum_{\underline{x}=x_{0}...x_{n_{k}}}$ represents the sum over finite words $\underline{x}=x_0x_1\dots x_{n_k}$ such that $\pi(x_{i})=z_{i}$, for $i=0,...,n_{k}$ and $\underline{x}\underline{w}=x_{0}...x_{n}w_{0}w_{1}...$.
\end{definition}

We show that 
\begin{proposition}\label{fs.prep.1.1}
The limit $u(\underline{z}):=\lim_{k\rightarrow \infty}u_{\underline{w},k}(\underline{z})$ is well defined and independent of $\underline{w}$.
\end{proposition}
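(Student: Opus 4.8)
The plan is to read $u_{\underline{w},k}(\underline{z})$ as a weighted average of a fixed continuous function over the fiber $\Pi^{-1}(\underline z)$ and to identify its limit with an intrinsic conditional probability of $\nu$. First I would peel off the zeroth coordinate. Writing $\psi_1^{n_k+1}(\underline{x}\underline{w})=\psi_1(x_0x_1\dots x_{n_k}\underline w)+\psi_1^{n_k}(x_1\dots x_{n_k}\underline w)$ and setting $g_{z_0}(\underline y):=\sum_{a\in\pi^{-1}(z_0)}e^{\psi_1(a\underline y)}$, which is continuous and satisfies $e^{\min\psi_1}\le g_{z_0}\le k_1\,e^{\max\psi_1}$, one checks the exact identity
\[
u_{\underline w,k}(\underline z)=\sum_{x_1\dots x_{n_k}}p_k^{\underline w}(x_1\dots x_{n_k})\,g_{z_0}(x_1\dots x_{n_k}\underline w),
\]
where $p_k^{\underline w}$ is the probability vector on admissible words $x_1\dots x_{n_k}$ (those with $\pi(x_i)=z_i$) proportional to $e^{\psi_1^{n_k}(x_1\dots x_{n_k}\underline w)}$. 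This factorization dissolves the apparent mismatch between the length $n_k+1$ in the numerator and $n_k$ in the denominator, and immediately shows the sequence is bounded and bounded away from zero, uniformly in $k$ and $\underline w$.

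Second, I would exploit uniform continuity of $\psi_1$ (that is, $var_m(\psi_1)\to 0$) to replace the integrand by a function of finitely many coordinates: for fixed $j$ and $n_k>j$, the value $g_{z_0}(x_1\dots x_{n_k}\underline w)$ differs from its value along any other continuation by at most a factor $e^{2\,var_j(\psi_1)}$. Hence the dependence of the integrand on both the deep coordinates and on $\underline w$ is $o(1)$ as $j\to\infty$, and it suffices to control the limit of the marginals of $p_k^{\underline w}$ on the first $j$ symbols $x_1\dots x_j$.

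Third, and this is where the sequential Gibbs hypothesis enters, I would compare these marginals with genuine conditional distributions of $\nu$. Since $n_k$ is a Gibbs time of every point of the fiber, Equation~\eqref{eq.1gibbs} (with $P=0$) pins each cylinder mass to the corresponding Birkhoff exponential within the uniform factor $K$; summing over the fiber identifies, up to this controlled factor, the weights defining $p_k^{\underline w}$ with the masses $\mu([x_1\dots x_{n_k}])$, hence the marginal of $p_k^{\underline w}$ on $x_1\dots x_j$ with the $\nu$-conditional law of $(z_1\dots z_j)$ given $(z_{j+1}\dots z_{n_k})$. These conditional laws converge for $\nu$-a.e. $\underline z$ as $n_k\to\infty$ by the martingale (L\'evy) convergence theorem, their limit being intrinsic to $\nu$ and manifestly independent of $\underline w$. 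Combining boundedness, the reduction to finitely many coordinates, and the convergence of the conditional marginals yields the existence of $u(\underline z)=\lim_k u_{\underline w,k}(\underline z)$ at $\nu$-a.e. $\underline z$, together with its independence of $\underline w$.

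The main obstacle is the auxiliary tail $\underline w$, which survives only through the weighting measures $p_k^{\underline w}$: the naive bound for changing the tail, $\sum_{m\le n_k}var_m(\psi_1)$, diverges, so tail-independence cannot come from equicontinuity of the individual summands. The delicate point is therefore that the two-sided Gibbs comparison, although uniform, is a priori only up to the fixed constant $K$, and a bare sandwich of $u_{\underline w,k}$ between $K^{\pm}$ multiples of a convergent sequence would give only oscillation, not convergence. Closing this gap requires showing that the comparison tightens to a ratio tending to $1$ in the limit, which I expect to obtain by telescoping the Gibbs estimate so that only the top-scale variation $var_{n_k}(\psi_1)\to 0$ remains (or, when $\mu$ is conformal, by replacing the inequalities of Equation~\eqref{eq.1gibbs} with the exact transfer-operator relation of the previous section); this is the step where the uniformity of $K$ at Gibbs times and the vanishing of $var_m(\psi_1)$ must be used in tandem.
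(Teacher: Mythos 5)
Your setup (peeling off the zeroth coordinate, the uniform two-sided bounds on $u_{\underline{w},k}$, and the reduction to finitely many coordinates via $var_m(\psi_1)\to 0$) is sound, but the proof has a genuine gap at exactly the point you flag yourself, and neither of the fixes you sketch closes it. The sequential Gibbs property identifies the weights of $p_k^{\underline{w}}$ with cylinder masses of $\mu$ only \emph{up to the fixed multiplicative constant} $K$ of Equation~\eqref{eq.1gibbs} (and, as a side issue, the relevant object is the $\mu$-conditional law of the fiber coordinates given the projection, not a conditional law of $\nu$ on $\Sigma_2$). Being sandwiched between $K^{\pm}$ multiples of a sequence that converges by L\'evy's theorem bounds the oscillation of $u_{\underline{w},k}$ but does not make it converge, and it controls the dependence on $\underline{w}$ only up to a bounded factor rather than killing it. Your proposed repair by ``telescoping the Gibbs estimate so that only $var_{n_k}(\psi_1)$ remains'' cannot work: the constant $K$ at Gibbs times is fixed and does not improve as $n_k\to\infty$, so no rearrangement of the inequalities makes the comparison ratio tend to $1$; and the conformal alternative is an extra hypothesis not available in the proposition. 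As written, your argument only shows that all limit points of $(u_{\underline{w},k}(\underline{z}))_k$, over all $k$ and $\underline{w}$, lie in a compact interval of bounded ratio, which is strictly weaker than the statement.

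The mechanism the paper actually uses is a coupling/contraction argument in the spirit of Kempton--Pollicott, and this is the missing idea. One first checks that the intervals $\Lambda_k(\underline{z})$ spanned by $\{u_{\underline{w},k}(\underline{z})\}_{\underline{w}}$ are nested and the ratios $\lambda_k(\underline{z})$ decreasing; then one writes $u_{\underline{w},k}(\underline{z})=\sum_{\overline{x}}u_{\overline{x}\underline{w},i}(\underline{z})\,P^{k,i}(\overline{x},\underline{w})$ as in Lemma~\ref{fs.lema3.7}, and uses two uniform comparisons: the probability vectors satisfy $P^{k,i}(\overline{x},\underline{w})\geq c\,P^{k,i}(\overline{x},\underline{w}')$ with $c=K^{-4}$, while $u_{\overline{x}\underline{w},i}/u_{\overline{x}\underline{w}',i}\leq e^{2\sum_{n=n_k-n_i}^{n_k}var_n(\psi_1,\cdot)}$ involves only high-order variations. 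Splitting $P_2=cP_1+(P_2-cP_1)$ with $P_2-cP_1\geq 0$ then yields the strict contraction of Lemma~\ref{fs.pglema13},
\[
\lambda_k(\underline{z})\ \leq\ c\,e^{2\sum_{n=n_k-n_i}^{n_k}var_n(\psi_1,\cdot)}+(1-c)\,\lambda_i(\underline{z}),
\]
and iterating this along $k,2k,4k,\dots$, using $n_{2k}-n_k\geq k$ and $var_n(\psi_1)\to 0$, forces $\lambda_k(\underline{z})\to 1$. It is this contraction by the factor $(1-c)$ --- not any improvement of the Gibbs constant itself --- that converts the fixed $K$ into an asymptotically tight comparison and gives, in one stroke, both existence of the limit and its independence of $\underline{w}$. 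If you wish to keep your conditional-measure framing, you would still need to import an argument of this type to show that the weights $p_k^{\underline{w}}$ and $p_k^{\underline{w}'}$ asymptotically couple.
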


The proof of Proposition \ref{fs.prep.1.1} is the central point of this article. We postpone this proof to the next section. However,  assume it to be true for a moment and let us prove that $\nu$ is a sequential Gibbs measure for $\psi_2=\log u$. First, we prove that:

\begin{lemma}\label{fs.lema 2.4}
There is a constant $C>0$ depending only $\psi_{1}$, such that for every $\underline{w},\ud{w}'$ and a sequence of Gibbs times $(n_{i}(\underline{x}))_{i\geq 1}$  and $0\leq l\leq n_{i}$, we have
 
\[\frac{e^{\psi_{1}^{n_{i}-l+1}(\sigma^{l}(\underline{x}\underline{w}))}}{e^{\psi_{1}^{n_{i}-l+1}(\sigma^{l}(\underline{x}\underline{w}'))}}\leq C\]
\end{lemma}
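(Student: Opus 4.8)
The plan is to isolate the one estimate that makes the whole construction work without any summability of the variations: \emph{at a Gibbs time, two points sharing a sufficiently long prefix assign comparable $\mu$-mass to their common cylinder}, so the corresponding Birkhoff sums of $\psi_1$ differ by at most a constant independent of the length. Concretely, I would write $\ud x = x_0\dots x_{n_i}$, so that $\ud x\ud w$ and $\ud x\ud w'$ agree on the coordinates $0,\dots,n_i$, and set $N = n_i - l$. Since $n_i$ is a Gibbs time of each of the two points (this is exactly where the regularity of $\Pi$ enters), the shift property of Gibbs times recorded after Equation~\eqref{eq.1gibbs} shows that $N$ is a Gibbs time of both $\sigma^l(\ud x\ud w)$ and $\sigma^l(\ud x\ud w')$, which share the first $N+1$ symbols $x_l\dots x_{n_i}$. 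I would then peel off the last summand, writing $\psi_1^{N+1}(\sigma^l(\ud x\ud w)) = \psi_1^{N}(\sigma^l(\ud x\ud w)) + \psi_1(\sigma^{n_i}(\ud x\ud w))$ and likewise for $\ud w'$, and treat the two pieces separately.

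For the length-$N$ parts I would apply the Gibbs inequality~\eqref{eq.1gibbs} (with $P=0$ and $j=0$) to each of $\sigma^l(\ud x\ud w)$ and $\sigma^l(\ud x\ud w')$ at the Gibbs time $N$. Both estimates involve the \emph{same} cylinder $[x_l\dots x_{n_i-1}]$ of length $N$, so dividing the upper bound for one point by the lower bound for the other cancels the common measure $\mu([x_l\dots x_{n_i-1}])$ and yields $e^{\psi_1^{N}(\sigma^l(\ud x\ud w)) - \psi_1^{N}(\sigma^l(\ud x\ud w'))} \le K^2$. For the remaining summand, the points $\sigma^{n_i}(\ud x\ud w)$ and $\sigma^{n_i}(\ud x\ud w')$ share their first symbol $x_{n_i}$, so $|\psi_1(\sigma^{n_i}(\ud x\ud w)) - \psi_1(\sigma^{n_i}(\ud x\ud w'))| \le var_1(\psi_1)$. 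Multiplying the two bounds gives the claim with $C = K^2 e^{var_1(\psi_1)}$, which depends only on $\psi_1$ (through the fixed constant $K$ and $var_1(\psi_1) \le 2\|\psi_1\|_\infty < \infty$, by continuity of $\psi_1$ on the compact space $\Sigma_1$). The degenerate case $l = n_i$ is immediate, since then only the trailing term survives.

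The main obstacle — and the only genuinely delicate point — is the requirement that $n_i$ be a Gibbs time of \emph{both} $\ud x\ud w$ and $\ud x\ud w'$ for arbitrary tails $\ud w, \ud w'$. This is indispensable: if only one of the two points realized $n_i$ as a Gibbs time, the other Birkhoff sum could be compared only through the crude telescoping bound $\sum_{q=1}^{N+1} var_q(\psi_1)$, which need not remain bounded when $\psi_1$ is merely continuous. It is precisely the regularity of $\Pi$ with respect to $\mu$ (Definition~\ref{fs hyp}), forcing the Gibbs times to be shared across the relevant preimages, that allows~\eqref{eq.1gibbs} to be invoked for both points simultaneously; once this is secured, the cancellation of the common cylinder measure does all the work, and the continuity of $\psi_1$ is used only for the single trailing symbol.
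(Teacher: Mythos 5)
Your proof is correct and takes essentially the same route as the paper's: both arguments sandwich the exponential of the Birkhoff sum between $K^{-1}\mu(C)$ and $K\,\mu(C)$ for the common cylinder $C$ determined by $x_l\dots x_{n_i}$ and then divide, so the measure cancels and only $K^2$ survives. The one cosmetic difference is that you peel off the final summand and bound it by $e^{var_1(\psi_1)}$ (giving $C=K^2e^{var_1(\psi_1)}$), whereas the paper applies the Gibbs inequality directly to the length-$(n_i-l+1)$ cylinder and gets $C=K^2$; your variant handles the off-by-one in the Gibbs-time index a bit more carefully but is otherwise the same argument.
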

\begin{proof}

Note that the definition of sequential Gibbs measures we have for every choice $\underline{w}$, $\underline{w}'$ and  $\underline{x}=x_{0}...x_{n_{i}}$ (with $0\leq l \leq n_{i}$),

\[K^{-1}e^{\psi_{1}^{n_{i}-l+1}(\sigma^{l}(\underline{x}\underline{w}))}\leq \mu[x_{l}...x_{n_{i}}]\leq Ke^{\psi_{1}^{n_{i}-l+1}(\sigma^{l}(\underline{x}\underline{w}'))}\]

Thus,

\[\frac{e^{\psi_{1}^{n_{i}-l+1}(\sigma^{l}(\underline{x}\underline{w}))}}{e^{\psi_{1}^{n_{i}-l+1}(\sigma^{l}(\underline{x}\underline{w}'))}}\leq K^2=C\]

\end{proof}

\begin{corollary}\label{fs.cor 2.4}
 With the same hypothesis of the previous lemma, we have that there is a constant 
 $C>0$, depending only $\psi_{1}$, such that for every $n_{i}$ and $0\leq l\leq n_{i}$, we have

 \[\frac{\sum_{\underline{x}=x_{l}...x_{n_{i}}}e^{\psi_{1}^{n_{i}-l+1}(\underline{x}\underline{w})}}
 {\sum_{\underline{x}=x_{l}...x_{n_{i}}}e^{\psi_{1}^{n_{i}-l+1}(\underline{x}\underline{w}')}}\leq C\]
 \end{corollary}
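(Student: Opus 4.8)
The plan is to deduce the corollary from Lemma~\ref{fs.lema 2.4} by a termwise comparison followed by the elementary mediant inequality. First I would note that both the numerator and the denominator of the quotient in the statement are sums over \emph{the same} index set: the collection of finite words $\underline{x}=x_{l}\dots x_{n_{i}}$ subject only to the constraint $\pi(x_{j})=z_{j}$ coming from the projection. Crucially, this constraint does not involve the tails $\underline{w}$ or $\underline{w}'$, so there is a canonical bijection between the terms appearing in the two sums, pairing the term built from a word $\underline{x}$ with tail $\underline{w}$ to the term built from the \emph{same} word $\underline{x}$ with tail $\underline{w}'$.

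Next I would reconcile the notation: the quantity $\psi_{1}^{n_{i}-l+1}(\underline{x}\underline{w})$ with $\underline{x}=x_{l}\dots x_{n_{i}}$ appearing here is exactly $\psi_{1}^{n_{i}-l+1}(\sigma^{l}(\underline{x}'\underline{w}))$ for a word $\underline{x}'=x_{0}\dots x_{n_{i}}$ extending it, since the Birkhoff sum of length $n_{i}-l+1$ started at $\sigma^{l}$ depends only on the coordinates from $l$ onward together with the tail. Therefore Lemma~\ref{fs.lema 2.4} applies to each paired term and yields, for every admissible word $\underline{x}=x_{l}\dots x_{n_{i}}$,
$$
e^{\psi_{1}^{n_{i}-l+1}(\underline{x}\underline{w})}\leq C\, e^{\psi_{1}^{n_{i}-l+1}(\underline{x}\underline{w}')},
$$
with $C=K^{2}$ depending only on $\psi_{1}$.

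Finally I would conclude by summing this termwise bound over all admissible words $\underline{x}=x_{l}\dots x_{n_{i}}$ and dividing by the denominator:
$$
\frac{\sum_{\underline{x}=x_{l}\dots x_{n_{i}}} e^{\psi_{1}^{n_{i}-l+1}(\underline{x}\underline{w})}}{\sum_{\underline{x}=x_{l}\dots x_{n_{i}}} e^{\psi_{1}^{n_{i}-l+1}(\underline{x}\underline{w}')}}\leq \frac{\sum_{\underline{x}=x_{l}\dots x_{n_{i}}} C\,e^{\psi_{1}^{n_{i}-l+1}(\underline{x}\underline{w}')}}{\sum_{\underline{x}=x_{l}\dots x_{n_{i}}} e^{\psi_{1}^{n_{i}-l+1}(\underline{x}\underline{w}')}}=C.
$$
This is just the elementary observation that $\sum_j a_j/\sum_j b_j\leq \max_j (a_j/b_j)$ for positive reals, applied with the uniform bound $a_j\leq C b_j$. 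There is no real obstacle here: the content is entirely carried by Lemma~\ref{fs.lema 2.4}, and the only point requiring a line of care is verifying that the two sums are indexed by an identical set of words so that the termwise estimate can be invoked; once that is checked, positivity of all summands makes the passage from individual ratios to the ratio of sums immediate.
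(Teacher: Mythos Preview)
Your argument is correct and is precisely the intended one: the paper states the corollary without proof, relying on the reader to apply Lemma~\ref{fs.lema 2.4} termwise over the common index set of words $\underline{x}=x_{l}\dots x_{n_{i}}$ and then sum, exactly as you do. The only thing you spell out beyond the paper is the identification $\psi_{1}^{n_{i}-l+1}(\underline{x}\underline{w})=\psi_{1}^{n_{i}-l+1}(\sigma^{l}(\underline{x}'\underline{w}))$, which is a notational check rather than additional content.
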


We can now define the potential for $\nu$.

\begin{definition}
 We define the potential $\psi_{2}:\Sigma_{2}\rightarrow \mathbb{R}$ by $\psi_{2}(\underline{z}):=\log u(\underline{z})$.
\end{definition}
The main problem is precisely to show that the potential $\psi_{2}$ is well defined. We follow the lines of \cite{Kempton-Pollicot} and also prove the  assertions of Theorem \ref{fs.teo proj}. Suppose, for a moment, that the Proposition~\ref{fs.prep.1.1} is true. We have the following lemma.

\begin{lemma}
 The measure $\nu=\Pi_*\mu$ is sequential Gibbs measure for the potential $\psi_{2}(\underline{z})=\log u(\underline{z})$.
\end{lemma}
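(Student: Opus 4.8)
The plan is to verify Equation~\eqref{eq.1gibbs} for $\nu$ and $\psi_2=\log u$ directly along the Gibbs times $n_k(\ud z)$, recalling that $P=0$. Throughout I write $A\asymp B$ to mean that $A/B$ stays in a fixed interval $[\tilde K^{-1},\tilde K]$, with $\tilde K$ depending only on $K$, $\|\psi_1\|_\infty$ and the sizes of the alphabets. For $\ud z\in E$ and a Gibbs time $M$ of $\ud z$ set
$$
N(\ud z,M)=\sum_{\ud x=x_0\dots x_M} e^{\psi_1^{M+1}(\ud x\ud w)},
$$
the numerator appearing in Definition~\ref{fs.def3}. Relabelling the index of the denominator there shows $u_{\ud w,k}(\ud z)=N(\ud z,n_k)/N(\sigma(\ud z),n_k-1)$, and more generally $u_{\ud w}(\sigma^j\ud z;m)=N(\sigma^j\ud z,m)/N(\sigma^{j+1}\ud z,m-1)$ whenever $m$ is a Gibbs time of $\sigma^j\ud z$.

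The first and decisive step is to telescope the product $e^{\psi_2^n(\ud z)}=\prod_{j=0}^{n-1}u(\sigma^j\ud z)$ for a fixed Gibbs time $n=n_k(\ud z)$. Replacing each factor by $u_{\ud w}(\sigma^j\ud z;n-j)$ does telescope, but need not approximate $u$ when $n-j$ is small. Instead I fix a large Gibbs time $M\ge n$ of $\ud z$; by the observation following Equation~\eqref{eq.1gibbs}, $M-j$ is then simultaneously a Gibbs time of every $\sigma^j\ud z$ for $0\le j\le n$. Restricting the convergence of Proposition~\ref{fs.prep.1.1} to this subsequence gives $u(\sigma^j\ud z)=\lim_{M\to\infty}N(\sigma^j\ud z,M-j)/N(\sigma^{j+1}\ud z,M-j-1)$, and since only finitely many factors occur the limit passes through the product, which now telescopes exactly:
$$
e^{\psi_2^n(\ud z)}=\lim_{M\to\infty}\frac{N(\ud z,M)}{N(\sigma^n\ud z,M-n)},
$$
the limit being over Gibbs times $M\ge n$ of $\ud z$. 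This is the reason mere continuity of $\psi_1$ suffices: no summability of variations is needed, only that the Gibbs times of the iterates are nested.

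It then remains to identify the right-hand side with $\nu([z_0\dots z_{n-1}])$ up to a uniform constant. First, Lemma~\ref{fs.lema 2.4} (with $l=0$) lets me exchange the tail $\ud w$ for the genuine tail of a preimage $\ud x\in\Pi^{-1}(\ud z)\cap G$, and the Gibbs property of $\mu$ at time $M$ together with boundedness of $\psi_1$ gives, after summing the bounded last-symbol contribution over the finitely many $x_M$ with $\pi(x_M)=z_M$, the comparison $N(\ud z,M)\asymp\nu([z_0\dots z_{M-1}])$; likewise $N(\sigma^n\ud z,M-n)\asymp\nu([z_n\dots z_{M-1}])$. Second, writing $\psi_1^{M}=\psi_1^{n}+\psi_1^{M-n}\circ\sigma^{n}$ and applying the Gibbs property of $\mu$ at the Gibbs times $n$ and $M$ of each preimage yields $\mu([x_0\dots x_{M-1}])\asymp\mu([x_0\dots x_{n-1}])\,\mu([x_n\dots x_{M-1}])$ for every preimage word; summing and factoring over the disjoint blocks $x_0\dots x_{n-1}$ and $x_n\dots x_{M-1}$ gives $\nu([z_0\dots z_{M-1}])\asymp\nu([z_0\dots z_{n-1}])\,\nu([z_n\dots z_{M-1}])$. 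Combining these, the ratio $N(\ud z,M)/N(\sigma^n\ud z,M-n)$ is comparable to $\nu([z_0\dots z_{n-1}])$ uniformly in $M$, so its limit satisfies $\nu([z_0\dots z_{n-1}])\asymp e^{\psi_2^n(\ud z)}$.

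To obtain Equation~\eqref{eq.1gibbs} for all $0\le j\le n_k-1$, I apply the whole argument to $\sigma^j\ud z$, which again lies in $E$ because $\cap_{k}\sigma^{-k}(D)$ is forward invariant and $\Pi\sigma=\sigma\Pi$; since $n_k-j$ is a Gibbs time of $\sigma^j\ud z$, this gives $\nu([z_j\dots z_{n_k-1}])\asymp e^{\psi_2^{n_k-j}(\sigma^j\ud z)}$ with the same constant $\tilde K$, which is exactly the sequential Gibbs inequality for $\nu$ and $\psi_2$ with $P=0$ and the same Gibbs times. The main obstacle is the first step: making the product of the \emph{limit} factors $u(\sigma^j\ud z)$ collapse to a single ratio, which forces the choice of a common Gibbs time $M$ of $\ud z$ and the subsequence extraction from Proposition~\ref{fs.prep.1.1}; once this is secured, the comparisons in the remaining steps are routine Gibbs bookkeeping.
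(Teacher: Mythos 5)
Your proposal is correct and follows essentially the same route as the paper: the decisive step in both is to telescope $\prod_j u(\sigma^j\ud z)$ along a common large Gibbs time $M$ of $\ud z$ (using that $M-j$ is then a Gibbs time of every $\sigma^j\ud z$), collapse the product to a single ratio of partition sums, and compare with $\nu$ of cylinders via the Gibbs property of $\mu$. The only cosmetic difference is that you identify the limit through the quasi-Bernoulli factorization $\nu([z_0\dots z_{M-1}])\asymp\nu([z_0\dots z_{n-1}])\,\nu([z_n\dots z_{M-1}])$, whereas the paper bounds the ratio directly by $\sum_{\ud x'}e^{\psi_1^{n_i-l+1}(\ud x'\ud w)}$ uniformly in $k$ via Corollary~\ref{fs.cor 2.4} before passing to the limit.
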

\begin{proof}
 Let us fix $n\geq 1$. We can write
 
 \begin{align*}  
\psi_{2}^{n+1}(\underline{z})&=\sum_{i=0}^{n}\log u(\sigma^{i}
 (\underline{z}))=\lim_{k\rightarrow \infty}\log u_{\underline{w},k}(\underline{z})+...+\lim_{k\rightarrow \infty}
 \log u_{\underline{w},k}(\sigma^{n}(\underline{z}))
 \end{align*}
 
Since $n_k(\ud z)-l$ is a Gibbs time of $\sigma^l(z)$ ,  given $1\leq l\leq n_k$, we may choose
a  subsequence $(i_{k}^{l})_{k\geq 1}$, such that
$n_{i_{k}^{l}}(\sigma^{l}(\underline{z}))=n_{k}(\underline{z})-l.$  Consequently, given $n$

 \begin{align*}
\psi_{2}^{n+1}(\underline{z})&= \lim_{k\rightarrow \infty}\log \left(u_{\underline{w},i_{k}^{0}(\underline{z})}(\underline{z})\cdot... \cdot 
u_{\underline{w},i_{k}^{n}(\sigma^{n}(\underline{z}))}(\sigma^{n}(\underline{z})\right)=\\
&=\lim_{k\rightarrow \infty}\log\left(\frac{\sum_{\underline{x}=x_{0}...x_{n_{k}}}e^{\psi_{1}^{n_{k}+1}(\underline{x}\underline{w})}}
{\sum_{\underline{x}'=x_{1}...x_{n_{k}}}e^{\psi_{1}^{n_{k}}(\underline{x}'\underline{w})}}\cdot 
\frac{\sum_{\underline{x}=x_{1}...x_{n_{k}}}e^{\psi_{1}^{n_{k}}(\underline{x}\underline{w})}}{\sum_{\underline{x}'=x_{2}...x_{n_{k}}}
e^{\psi_{1}^{n_{k}}(\underline{x}'\underline{w})}}\cdot...\right.\\
&\cdot \left.\frac{\sum_{\underline{x}=x_{n}...x_{n_{k}}}e^{\psi_{1}^{n_{k}-n+1}(\underline{x}\underline{w})}}{\sum_{\underline{x}'=
x_{n+1}...x_{n_{k}}}e^{\psi_{1}^{n_{k}-n}(\underline{x}'\underline{w})}}\right)=\lim_{k\rightarrow \infty}\log \left(\frac{\sum_{\underline{x}=
x_{0}...x_{n_{k}}}e^{\psi_{1}^{n_{k}+1}(\underline{x}\underline{w})}}{\sum_{\overline{x}=x_{n+1}...x_{n_{k}}}e^{\psi_{1}^{n_{k}-n}
(\overline{x}\underline{w})}}\right)
\end{align*}

Note that, in the same way for $1\leq l\leq n$, we have

\begin{eqnarray}\label{fs.des 2}
\psi_{2}^{n-l+1}(\sigma^{l}(\underline{z}))=\lim_{k\rightarrow \infty}\log \left(\frac{\sum_{\underline{x}=
x_{l}...x_{n_{k}}}e^{\psi_{1}^{n_{k}-l+1}(\underline{x}\underline{w})}}{\sum_{\overline{x}=x_{n+1}...x_{n_{k}}}e^{\psi_{1}^{n_{k}-n}
(\overline{x}\underline{w})}}\right)
\end{eqnarray}

Moreover, for $n_{k} > n_{i}$ and for $0\leq l\leq n_{i}$ we can write

\begin{eqnarray*}
 \sum_{\underline{x}=x_{l}...x_{n_{k}}}e^{\psi_{1}^{n_{k}-l+1}(\underline{x}\underline{w})}
=\sum_{\underline{x}=x_{l}...x_{n_{i}}}\sum_{\overline{x}=x_{n_{i}+1}...x_{n_{k}}}
e^{\psi_{1}^{n_{i}-l+1}(\underline{x}\overline{x}\underline{w})}e^{\psi_{1}^{n_{k}-n_{i}}
(\overline{x}\underline{w})}
\end{eqnarray*}

By Corollary \ref{fs.cor 2.4}, for $0\leq l \leq n_i$ we have

\[\sum_{\underline{x}=x_{l}...x_{n_{k}}}e^{\psi_{1}^{n_{k}-l+1}(\underline{xw})}\leq C\sum_{\underline{x}'=x_{l}...x_{n_{i}}}
e^{\psi_{1}^{n_{i}-l+1}(\underline{x}'\underline{w})}\sum_{\overline{x}=x_{n_{i}+1}...x_{n_{k}}}e^{\psi_{1}^{n_{k}-n_{i}}
(\overline{x}\underline{w})},\]
and also follows from Corollary \ref{fs.cor 2.4}

\begin{eqnarray}
 C^{-1}\cdot\sum_{\underline{x}'=x_{l}...x_{n_{i}}} e^{\psi_{1}^{n_{i}-l+1}(\underline{x}'\underline{w})}\leq
\frac{\sum_{\underline{x}=x_{l}...x_{n_{k}}}e^{\psi_{1}^{n_{k}-l+1}(\underline{x}\underline{w})}}{\sum_{\overline{x}=
x_{n_{i}+1}...x_{n_{k}}}e^{\psi_{1}^{n_{k}-n_{i}}(\overline{x}\underline{w})}}
 \leq
C\cdot \sum_{\underline{x}'=x_{l}...x_{n_{i}}}e^{\psi_{1}^{n_{i}-l+1}(\underline{x}'\underline{w})}\nonumber
\end{eqnarray}

Taking $k\rightarrow\infty$, we have

\[ C^{-1}\cdot\sum_{\underline{x}'=x_{l}...x_{n_{i}}} e^{\psi_{1}^{n_{i}-l+1}(\underline{x}'\underline{w})}\leq
e^{\psi_{2}^{n_i-l+1}(\sigma^{l}(\underline{z}))}
 \leq C\cdot \sum_{\underline{x}'=x_{l}...x_{n_{i}}}e^{\psi_{1}^{n_{i}-l+1}(\underline{x}'\underline{w})}
\]

And so

\[e^{\psi_{2}^{n_i-l+1}(\sigma^{l}(\underline{z}))}\approx \sum_{\underline{x}'=x_{l}...x_{n_{i}}}e^{\psi_{1}^{n_{i}-l+1}(\underline{x}'\underline{w})}\]

Since $\mu$ is an sequential Gibbs measure  for $\psi_{1}$ 
then
for each $\underline{x} \in \Pi^{-1}(\underline{z})\cap \Sigma_1$ and an sequence $(n_{i}(\underline{x}))_{i\geq 1}$,


$$
e^{\psi_1^{n_{i}+1-l}(\sigma^{l}(\underline{x}))}\approx \mu_{1}[x_{l}...x_{n_{i}}]
$$

for $0\leq l\leq n_{i}$. Adding over all words $\underline{x}$ that are projected in $\underline{z}$, we have





\[\nu[z_{l}...z_{n_{i}}]=\sum_{x_{l}...x_{n_{i}}}\mu[x_{l}...x_{n_{i}}]\approx e^{\psi_{2}^{n_i-l+1}(\sigma^{l}(\underline{z}))}\]
for each $0\leq l\leq n_{i}$, proving that $\nu$ is sequential Gibbs measure for  $\sigma:\Sigma_{2}\rightarrow \Sigma_{2}$ and  $\psi_2$.
\end{proof}

In this section, we prove that $\psi_{2}$ is well defined. We give definitions that  help us in this purpose.

\begin{definition}
 Let be $k\in \mathbb{N}$ and $\underline{z}\in E\subset \Sigma_{2}$. We define the closed interval
 \[\Lambda_{k}(\underline{z}):=\left[\min_{\underline{w}}u_{\underline{w},k}(\underline{z}),\max_{\underline{w'},}
 u_{\underline{w'},k}(\underline{z})\right].\]

 Given $k\in \mathbb{N}$ e $\underline{z}\in E\subset\Sigma_{2}$. We define
\[\lambda_{k}(\underline{z}):=\sup\left\{ \frac{u_{\underline{w},k}(\underline{z})}{u_{\underline{w}',k}(\underline{z})}
:\underline{w},\underline{w'}\in \Sigma_{1}\right\}.\]
\end{definition}

We say that a sequence of intervals $I_{n}$ is monotonically nested if we have
 \[I_{0}\supseteq I_{1}\supseteq ... \supseteq I_{n}\supseteq...\]

In the next lemma, we show that the sequence $(\Lambda_{k}(\underline{z}))_{k\geq 1}$ is monotonically nested. Then, the existence of
$\psi_{2}$ at $\nu-$a.e. $\underline{z}\in \Sigma_{2}$ corresponds to the convergence to  1 of the sequence 
$\lambda_{k}(\underline{z})$.

\begin{lemma}\label{fs. nested}
The sequence of intervals $(\Lambda_{k}(\underline{z}))_{k\geq 1}$ is monotonically \textit{nested}. 
\end{lemma}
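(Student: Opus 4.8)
The plan is to prove the stronger statement that for every boundary word $\underline{w}$ the value $u_{\underline{w},k+1}(\underline{z})$ is a convex combination of values $u_{\underline{w}',k}(\underline{z})$, each of which lies in $\Lambda_k(\underline{z})$ by the very definition of that interval; the nesting $\Lambda_{k+1}(\underline{z})\subseteq\Lambda_k(\underline{z})$ then follows at once, since both endpoints of $\Lambda_{k+1}(\underline{z})$ are attained by such values $u_{\underline{w},k+1}(\underline{z})$. To ease notation I abbreviate $n_k=n_k(\underline{z})$ and write the numerator and denominator of $u_{\underline{w},k}$ as $A_k(\underline{w})=\sum_{\underline{x}=x_0\ldots x_{n_k}}e^{\psi_1^{n_k+1}(\underline{x}\underline{w})}$ and $B_k(\underline{w})=\sum_{\underline{x}'=x_1\ldots x_{n_k}}e^{\psi_1^{n_k}(\underline{x}'\underline{w})}$, so that $u_{\underline{w},k}(\underline{z})=A_k(\underline{w})/B_k(\underline{w})$.

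First I would split each admissible word $\underline{x}=x_0\ldots x_{n_{k+1}}$ at level $k+1$ into a prefix $p=x_0\ldots x_{n_k}$ and a suffix $s=x_{n_k+1}\ldots x_{n_{k+1}}$, each constrained by $\pi(x_i)=z_i$ on its own coordinate range. Applying the additive cocycle identity
$$\psi_1^{n_{k+1}+1}(\underline{x}\underline{w})=\psi_1^{n_k+1}(\underline{x}\underline{w})+\psi_1^{n_{k+1}-n_k}\bigl(\sigma^{n_k+1}(\underline{x}\underline{w})\bigr),$$
I would use two observations: the shifted point $\sigma^{n_k+1}(\underline{x}\underline{w})=s\underline{w}$ depends only on the suffix, and writing $\underline{w}':=s\underline{w}$ one has $\underline{x}\underline{w}=p\,\underline{w}'$, so that summing the prefix factor over $p$ produces exactly $A_k(\underline{w}')$. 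The identical bookkeeping for the denominator (cutting $\psi_1^{n_{k+1}}$ at the coordinate $n_k$) yields the same suffix weight $e^{\psi_1^{n_{k+1}-n_k}(s\underline{w})}$ together with the level-$k$ denominator $B_k(s\underline{w})$. This gives the factorizations $A_{k+1}(\underline{w})=\sum_s e^{\psi_1^{n_{k+1}-n_k}(s\underline{w})}A_k(s\underline{w})$ and $B_{k+1}(\underline{w})=\sum_s e^{\psi_1^{n_{k+1}-n_k}(s\underline{w})}B_k(s\underline{w})$, where both sums range over the same set of suffixes $s$.

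With $c_s:=e^{\psi_1^{n_{k+1}-n_k}(s\underline{w})}B_k(s\underline{w})>0$ these factorizations give
$$u_{\underline{w},k+1}(\underline{z})=\frac{A_{k+1}(\underline{w})}{B_{k+1}(\underline{w})}=\frac{\sum_s c_s\,u_{s\underline{w},k}(\underline{z})}{\sum_s c_s},$$
exhibiting $u_{\underline{w},k+1}(\underline{z})$ as a convex combination of the numbers $u_{s\underline{w},k}(\underline{z})$. Since each $s\underline{w}\in\Sigma_1$ is an admissible boundary word, every $u_{s\underline{w},k}(\underline{z})$ lies in $\Lambda_k(\underline{z})$, and hence so does the convex combination. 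Taking the minimum and maximum over $\underline{w}$ then yields $\Lambda_{k+1}(\underline{z})\subseteq\Lambda_k(\underline{z})$, which is the asserted monotone nesting.

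I expect the only real obstacle to be the index bookkeeping of the second paragraph: one must verify that cutting $\psi_1^{n_{k+1}+1}$ at $n_k+1$ and $\psi_1^{n_{k+1}}$ at $n_k$ produce the \emph{same} suffix point $s\underline{w}$ and the \emph{same} suffix weight in numerator and denominator, and that the residual prefix sums are genuinely the level-$k$ quantities $A_k$ and $B_k$ evaluated at the new boundary $\underline{w}'=s\underline{w}$. This uses only that the constraint $\pi(x_i)=z_i$ is imposed coordinate-by-coordinate, so it decouples cleanly across the cut; once it is in place the convex-combination conclusion is immediate and requires no quantitative estimate on $\psi_1$.
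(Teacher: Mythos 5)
Your proof is correct and follows essentially the same route as the paper: you split each word at the Gibbs time $n_k$, factor the Birkhoff sum, and recognize $u_{\underline{w},k+1}(\underline{z})$ as a weighted mediant of the level-$k$ values $u_{s\underline{w},k}(\underline{z})$, which is exactly the paper's computation (the paper phrases the final step as the bound $\frac{\sum_s a_s}{\sum_s b_s}\leq\max_s\frac{a_s}{b_s}$ rather than as an explicit convex combination, and it makes the convex-combination structure explicit only later via the probability vectors $P^{k,i}$). The index bookkeeping you flag does check out: cutting the numerator cocycle at $n_k+1$ and the denominator cocycle at $n_k$ produces the same suffix point $s\underline{w}$ and the same weight $e^{\psi_1^{n_{k+1}-n_k}(s\underline{w})}$ in both, because the denominator prefix starts at index $1$.
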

\begin{proof}
Given $z\in E$, observe that 

\begin{eqnarray*}
u_{\underline{w},k+1}(\underline{z})&=&\frac{\sum_{\underline{x}=x_{0}...x_{n_{k+1}}}e^{\psi_{1}^{n_{k+1}+1}
(\underline{x}\underline{w})}}{\sum_{\underline{x'}=x_{1}...x_{n_{k+1}}}
e^{\psi_{1}^{n_{k+1}}(\underline{x}'\underline{w})}}\\
&=&\frac{\sum_{\underline{x}=x_{0}... x_{n_{k}}}\sum_{\overline{x}=x_{n_{k}}+1 ... x_{n_{k+1}}}e^{\psi_{1}^{n_{k}+1}
(\underline{x}\overline{x}\underline{w})}e^{\psi_{1}^{n_{k+1}-n_{k}}(\overline{x}\underline{w})}}{\sum_{\underline{x}'=
x_{1}... x_{n_{k}}}\sum_{\overline{x}=x_{n_{k}}+1 ... x_{n_{k+1}}}e^{\psi_{1}^{n_{k}}
(\underline{x}\overline{x}\underline{w})}e^{\psi_{1}^{n_{k+1}-n_{k}}(\overline{x}\underline{w})}}\\
&\leq&\max_{\overline{x}}\frac{\sum_{\underline{x}=x_{0}...x_{n_{k}}}e^{\psi_{1}^{n_{k}+1}
(\underline{x}\overline{x}\underline{w})}}{\sum_{\underline{x}'=x_{1}...x_{n_{k}}}
e^{\psi_{1}^{n_{k}}(\underline{x}'\overline{x}\underline{w})}}
 \leq \max_{\underline{w}'}u_{\underline{w}',k}(\underline{z})
 \end{eqnarray*}

 On the other hand, for given $\ov x$
 \begin{eqnarray*}
\min_{\underline{w}'}u_{\underline{w}',k}(\underline{z})
&\leq&\min_{\overline{x}\underline{w}}
u_{\overline{x}\underline{w},k}(\underline{z})\\
&=& \min_{\overline{x}\underline{w}}\frac{\sum_{\underline{x}=x_{0}...x_{n_{k}}}e^{\psi_{1}^{n_{k}+1}
(\underline{x}\overline{x}\underline{w})}}{\sum_{\underline{x}'=x_{1}...x_{n_{k}}}
e^{\psi_{1}^{n_{k}}(\underline{x}'\overline{x}\underline{w})}}\\
&\leq&\frac{\sum_{\underline{x}=x_{0}...x_{n_{k}}}\sum_{\overline{x}=x_{n_{k}}+1 ... x_{n_{k+1}}}e^{\psi_{1}^{n_{k}+1}
(\underline{x}\overline{x}\underline{w})}e^{\psi_{1}^{n_{k+1}-n_{k}}(\overline{x}\underline{w})}}{\sum_{\underline{x}'=
x_{1}...x_{n_{k}}}\sum_{\overline{x}=x_{n_{k}}+1 ... x_{n_{k+1}}}e^{\psi_{1}^{n_{k}}
(\underline{x}\overline{x}\underline{w})}e^{\psi_{1}^{n_{k+1}-n_{k}}(\overline{x}\underline{w})}}
= u_{\underline{w},k+1}(\underline{z})
\end{eqnarray*}
 
\end{proof}

\begin{lemma}\label{fs.lema 1.2}
For $\nu$-a.e. $\underline{z}\in \Sigma_{2}$, the sequence $\lambda_{0}(\underline{z})\geq \lambda_{1}(\underline{z})\geq...\geq 
 \lambda_{n}(\underline{z})\geq\lambda_{n+1}(\underline{z})\geq...\geq 1$ is a decreasing sequence.
\end{lemma}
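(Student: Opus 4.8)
The plan is to read off the claim directly from the nestedness of the intervals $\Lambda_k(\ud z)$ already established in Lemma~\ref{fs. nested}, together with the elementary observation that $\lambda_k(\ud z)$ is precisely the ratio of the two endpoints of $\Lambda_k(\ud z)$. So essentially no new analytic work is needed: the substance of the statement lives in the previous lemma, and what remains is to convert the set-theoretic inclusion $\Lambda_{k+1}(\ud z)\subseteq \Lambda_k(\ud z)$ into a monotonicity statement about a quotient.

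First I would fix $\ud z$ in the full-measure set $E$ on which the Gibbs times $n_k(\ud z)$, and hence the functions $u_{\ud w,k}(\ud z)$, are defined. For such $\ud z$ and fixed $k$, I note that $\ud w \mapsto u_{\ud w,k}(\ud z)$ is a strictly positive function of $\ud w\in\Sigma_1$: its numerator and denominator are finite sums (the alphabet is finite) of exponentials $e^{\psi_1^{\,m}(\ud x\ud w)}$, and since $\psi_1$ is continuous these depend continuously on $\ud w$ and are bounded away from $0$ and $\infty$. By compactness of $\Sigma_1$ the infimum and supremum over $\ud w$ are attained and lie in $(0,\infty)$, so I may write $\Lambda_k(\ud z)=[a_k(\ud z),b_k(\ud z)]$ with $0<a_k(\ud z)\le b_k(\ud z)<\infty$, where $a_k(\ud z)=\min_{\ud w}u_{\ud w,k}(\ud z)$ and $b_k(\ud z)=\max_{\ud w}u_{\ud w,k}(\ud z)$. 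The ratio $u_{\ud w,k}(\ud z)/u_{\ud w',k}(\ud z)$ is maximized by taking the numerator as large and the denominator as small as possible, so by definition $\lambda_k(\ud z)=b_k(\ud z)/a_k(\ud z)$.

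With this identification the conclusion is immediate. By Lemma~\ref{fs. nested} the intervals are monotonically nested, that is $\Lambda_{k+1}(\ud z)\subseteq\Lambda_k(\ud z)$, which means $a_k(\ud z)\le a_{k+1}(\ud z)\le b_{k+1}(\ud z)\le b_k(\ud z)$. Since all four quantities are strictly positive, from $b_{k+1}(\ud z)\le b_k(\ud z)$ and $a_{k+1}(\ud z)\ge a_k(\ud z)>0$ I obtain $\lambda_{k+1}(\ud z)=b_{k+1}(\ud z)/a_{k+1}(\ud z)\le b_k(\ud z)/a_k(\ud z)=\lambda_k(\ud z)$, giving the asserted decreasing sequence; and $b_k(\ud z)\ge a_k(\ud z)$ gives $\lambda_k(\ud z)\ge 1$ for every $k$. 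The only points that deserve an explicit word are the attainment of the extrema (compactness plus continuity in $\ud w$) and the quotient identity $\lambda_k=b_k/a_k$; once these are recorded, the monotonicity is a one-line consequence of the nestedness, so I do not expect any genuine obstacle here.
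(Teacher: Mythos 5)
Your proof is correct and follows essentially the same route as the paper: both arguments deduce the monotonicity of $\lambda_k(\underline{z})$ directly from the nestedness established in Lemma~\ref{fs. nested}, the only cosmetic difference being that you phrase it via the endpoint identification $\lambda_k=b_k/a_k$ while the paper bounds the ratio $u_{\underline{w},k+1}/u_{\underline{w}',k+1}$ by $\sup_{\underline{v},\underline{v}'}u_{\underline{v},k}/u_{\underline{v}',k}$ and then takes the supremum. Your extra remarks on attainment of the extrema and strict positivity are harmless bookkeeping that the paper leaves implicit.
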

\begin{proof}
From inequalities of Lemma \ref{fs. nested}, to $\underline{w},\underline{w}'\in \Sigma_{1}$, we have
 
\[\frac{u_{\underline{w},k+1}(\underline{z})}{u_{\underline{w}',k+1}(\underline{z})}\leq
\sup_{\underline{v},\underline{v}' \in \Sigma_{1}}
\left\{\frac{u_{\underline{v},k}(\underline{z})}{u_{\underline{v}',k}(\underline{z})}\right\}= \lambda_{k}(\underline{z})\]
Taking the supremum over $\underline{w},\underline{w}'\in \Sigma_{1}$,
we have
$\lambda_{k+1}(\underline{z})\leq \lambda_{k}(\underline{z})$.
 
\end{proof}

Now, we show that $\lambda_{k}(\underline{z})\rightarrow 1$ for $\nu$-a.e. $\underline{z}\in \Sigma_{2}$. Let $n_{i}<n_{k}$ be Gibbs times of $\ud z$  and words $x_{0},...\,,x_{n_{i}}$ that projects on $\underline{z}$. We define

 \[P^{k,\,i}(\overline{x},\underline{w})=\frac{\sum_{\underline{x}=x_{1}...x_{n_{i}}}
 e^{\psi_{1}^{n_{k}}(\underline{x}\overline{x}\underline{w})}}{\sum_{\underline{x}'=x_{1}...x_{n_{k}}}
 e^{\psi_{1}^{n_{k}}(\underline{x}'\underline{w})}}\]

The probability vector $P^{k,\,i}(\overline{x},\underline{w})$ allow us to express the
function $u_{\underline{w},k}$ in terms of the function $u_{\underline{w},i}$, for $i<k$. Indeed, we have the

\begin{lemma}\label{fs.lema3.7}

 Let  $n_{i}<n_{k}$ be Gibbs times  of $\underline{z} \in B$. Then, we have that
 
\begin{eqnarray*}
u_{\underline{w},k}(\underline{z})= \sum_{\overline{x}=x_{n_{i}+1}...\,x_{n_{k}}}u_{\overline{x}
 \underline{w},i}(\underline{z})P^{k,i}(\overline{x},\underline{w}).
 \end{eqnarray*}
 where the sum above is over words $x_{0}...\,x_{n_{k}}$ that project onto $z_{0}...\,z_{n_{k}}$.
\end{lemma}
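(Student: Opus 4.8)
The plan is to prove the identity by a direct computation, the only real tool being the additive cocycle relation $\psi_1^{m+n}(\underline{a}) = \psi_1^m(\underline{a}) + \psi_1^n(\sigma^m(\underline{a}))$ applied twice, together with a regrouping of the finite sums over admissible words. First I would unwind the definitions: I write out $u_{\overline{x}\underline{w},i}(\underline{z})$ as a quotient whose numerator runs over words $y_0\dots y_{n_i}$ projecting onto $z_0\dots z_{n_i}$ and whose denominator runs over $y_1\dots y_{n_i}$ projecting onto $z_1\dots z_{n_i}$, all carrying the common tail $\overline{x}\underline{w}$; and I write out $P^{k,i}(\overline{x},\underline{w})$, whose denominator is exactly the quantity $\sum_{\underline{x}'=x_1\dots x_{n_k}} e^{\psi_1^{n_k}(\underline{x}'\underline{w})}$ that also serves as the denominator of $u_{\underline{w},k}(\underline{z})$.

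The first key step is to factor the numerator of $P^{k,i}(\overline{x},\underline{w})$. Since $\underline{x}=x_1\dots x_{n_i}$ has length $n_i$, and $\sigma^{n_i}(\underline{x}\,\overline{x}\,\underline{w}) = \overline{x}\,\underline{w}$, the cocycle relation gives $\psi_1^{n_k}(\underline{x}\,\overline{x}\,\underline{w}) = \psi_1^{n_i}(\underline{x}\,\overline{x}\,\underline{w}) + \psi_1^{n_k-n_i}(\overline{x}\,\underline{w})$; the second term is independent of $\underline{x}$, so it factors out of the sum over $x_1\dots x_{n_i}$. What remains inside is precisely $\sum_{x_1\dots x_{n_i}} e^{\psi_1^{n_i}(\underline{x}\,\overline{x}\,\underline{w})}$, which is the denominator of $u_{\overline{x}\underline{w},i}(\underline{z})$. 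Consequently, in the product $u_{\overline{x}\underline{w},i}(\underline{z})\,P^{k,i}(\overline{x},\underline{w})$ this denominator cancels, leaving $e^{\psi_1^{n_k-n_i}(\overline{x}\underline{w})}$ times the numerator of $u_{\overline{x}\underline{w},i}(\underline{z})$, all over the common denominator $\sum_{\underline{x}'=x_1\dots x_{n_k}} e^{\psi_1^{n_k}(\underline{x}'\underline{w})}$.

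The second key step applies the cocycle relation once more, now at level $n_i+1$: for each word $\underline{y}=y_0\dots y_{n_i}$ one has $\sigma^{n_i+1}(\underline{y}\,\overline{x}\,\underline{w}) = \overline{x}\,\underline{w}$, so $\psi_1^{n_i+1}(\underline{y}\,\overline{x}\,\underline{w}) + \psi_1^{n_k-n_i}(\overline{x}\,\underline{w}) = \psi_1^{n_k+1}(\underline{y}\,\overline{x}\,\underline{w})$. Hence $u_{\overline{x}\underline{w},i}(\underline{z})\,P^{k,i}(\overline{x},\underline{w}) = \frac{\sum_{y_0\dots y_{n_i}} e^{\psi_1^{n_k+1}(\underline{y}\,\overline{x}\,\underline{w})}}{\sum_{x_1\dots x_{n_k}} e^{\psi_1^{n_k}(\underline{x}'\underline{w})}}$. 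Finally I would sum over all $\overline{x}=x_{n_i+1}\dots x_{n_k}$ projecting onto $z_{n_i+1}\dots z_{n_k}$ and observe that the pair $(\underline{y},\overline{x})$ ranges bijectively, via concatenation, over the words $x_0\dots x_{n_k}$ projecting onto $z_0\dots z_{n_k}$; the numerators therefore reassemble into $\sum_{x_0\dots x_{n_k}} e^{\psi_1^{n_k+1}(\underline{x}\underline{w})}$, and the quotient becomes exactly $u_{\underline{w},k}(\underline{z})$, which is the claim.

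I expect no genuine obstacle here, since the statement is an algebraic identity rather than an estimate. The one place that demands care is the bookkeeping of word lengths and of the shift exponents when invoking the cocycle relation, namely verifying that $\sigma^{n_i}$ (respectively $\sigma^{n_i+1}$) on the concatenated sequence strips off exactly the word $\underline{x}$ (respectively $\underline{y}$) and leaves the tail $\overline{x}\,\underline{w}$, and that the concatenation of the inner words yields a genuine bijection onto the admissible words of length $n_k+1$ that project onto $z_0\dots z_{n_k}$.
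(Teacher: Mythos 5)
Your proposal is correct and is essentially the paper's own proof read in reverse: the paper starts from the numerator of $u_{\underline{w},k}(\underline{z})$, splits it with the cocycle relation at level $n_i+1$, and inserts the factor $\sum_{\underline{x}'=x_1\dots x_{n_i}}e^{\psi_1^{n_i}(\underline{x}'\overline{x}\underline{w})}$ to recognize $u_{\overline{x}\underline{w},i}$ and the numerator of $P^{k,i}$, whereas you start from the right-hand side and cancel that same factor. The two computations are identical in substance, so nothing further is needed.
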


\begin{proof}

By definition, the numerator of $u_{\underline{w},k}(\underline{z})$ is

\begin{eqnarray}\label{fs. eq 1}
 \sum_{\underline{x}=x_{0}...\,x_{n_{k}}}e^{\psi_{1}^{n_{k}+1}(\underline{x}\underline{w})}=\sum_{\underline{x}=x_{0}...\,x_{n_{i}}}
\sum_{\overline{x}=x_{n_{i}+1}...\,x_{n_k}}e^{\psi_{1}^{n_{i}+1}(\underline{x}\overline{x}\underline{w})}
e^{\psi_{1}^{n_{k}-n_{i}}(\overline{x}\underline{w})} 
\end{eqnarray}

Further,  we can rewrite the right hand side of Equation \eqref{fs. eq 1} as

\begin{eqnarray}\label{fs. eq 2}
 =\sum_{\overline{x}=x_{n_{i}+1}...\,x_{n_{k}}}\underbrace{\left(\frac{\sum_{\underline{x}=x_{0}...x_{n_{i}}}e^{\psi_{1}^{n_{i}+1}(\underline{x}\overline{x}
 \underline{w})}}{\sum_{\underline{x}'=x_{1}...\,x_{n_{i}}}e^{\psi_{1}^{n_{i}}(\underline{x}'\overline{x}
\underline{w})}}\right)}_{u_{\overline{x}\underline{w},i}(\underline{z})}
 \underbrace{\left(\sum_{\underline{x}'=x_{1}...\,x_{n_{i}}}e^{\psi_{1}^{n_{i}}(\underline{x}'\overline{x}\underline{w})}\right) 
 e^{\psi_{1}^{n_{k}-n_{i}}(\overline{x}\underline{w})}}_{\sum_{\underline{x}'=x_{1}...\, x_{n_{i}}}e^{\psi_{1}^{n_{k}}(\underline{x}'
 \overline{x}\underline{w})}}\\ \nonumber
\end{eqnarray}
 
 Then, dividing both members of Equation \eqref{fs. eq 2} by
 $\sum_{\underline{x}=x_{1}...\,x_{n_{k}}}e^{\psi_{1}^{n_{k}}}(\underline{x}\,\underline{w})$,
 we have
 \[u_{\underline{w},k}(\underline{z})= \sum_{\overline{x}=x_{n_{i}+1}...\,x_{n_{k}}}u_{\overline{x}\underline{w},i}(\underline{z})
 \cdot P^{k,i}(\overline{x},\underline{w})\]
\end{proof}

\begin{corollary}\label{fs.cor3.8}
 \[\frac{u_{\underline{w},k}(\underline{z})}{u_{\underline{w}',k}(\underline{z})}=
 \frac{\sum_{\overline{x}=x_{n_{i}+1}...\,x_{n_{k}}}u_{\overline{x}
 \underline{w},i}(\underline{z})P^{k,i}(\overline{x},\underline{w})}{\sum_{\overline{x}=x_{n_{i}+1}
 ...\,x_{n_{k}}}u_{\overline{x}
 \underline{w}',i}(\underline{z})P^{k,i}(\overline{x},\underline{w}')}\]
\end{corollary}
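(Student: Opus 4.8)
The plan is to derive this identity as an immediate consequence of Lemma~\ref{fs.lema3.7}, with essentially no work beyond forming a quotient. First I would apply Lemma~\ref{fs.lema3.7} with the tail word $\underline{w}$ to express the numerator as
\[
u_{\underline{w},k}(\underline{z})=\sum_{\overline{x}=x_{n_{i}+1}\dots x_{n_{k}}} u_{\overline{x}\underline{w},i}(\underline{z})\,P^{k,i}(\overline{x},\underline{w}),
\]
where the sum ranges over the finitely many words $\overline{x}=x_{n_{i}+1}\dots x_{n_{k}}$ that project onto $z_{n_{i}+1}\dots z_{n_{k}}$. I would then apply the very same lemma verbatim, with $\underline{w}'$ in place of $\underline{w}$, to obtain the analogous decomposition of $u_{\underline{w}',k}(\underline{z})$, now carrying the weights $P^{k,i}(\overline{x},\underline{w}')$.

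Dividing the first identity by the second yields exactly the claimed formula. The only point worth verifying is that both sums are indexed by the same collection of tail words $\overline{x}$, namely those compatible with $z_{n_{i}+1}\dots z_{n_{k}}$, so that the quotient is well defined as written; the weights $P^{k,i}(\overline{x},\cdot)$ and the factors $u_{\overline{x}\,\cdot\,,i}(\underline{z})$ differ between numerator and denominator only through their dependence on $\underline{w}$ versus $\underline{w}'$, and no cancellation occurs between the two sums. Since everything reduces to substituting the two instances of Lemma~\ref{fs.lema3.7}, there is essentially no obstacle here; the genuine content sits in the lemma itself, and this corollary merely repackages it into the ratio form that will be convenient for later controlling $\lambda_k(\underline{z})$.
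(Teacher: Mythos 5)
Your proposal is correct and matches the paper's argument exactly: the paper's proof is simply ``Follows directly from Lemma~\ref{fs.lema3.7},'' i.e., apply the lemma once with $\underline{w}$ and once with $\underline{w}'$ and take the quotient. Your added remark that both sums range over the same collection of tail words $\overline{x}$ compatible with $z_{n_i+1}\dots z_{n_k}$ is a sensible sanity check but introduces nothing beyond what the paper intends.
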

\begin{proof}
Follows directly from Lemma \ref{fs.lema3.7}.

\end{proof}

\begin{lemma}\label{fs.lema 4.5}
There exist $c>0$ such that for any $n_{i}<n_{k}$ Gibbs times of $\underline{z}$ and for $ \overline{x}=x_{n_{i}+1}...x_{n_{k}}$ that projects onto  $ \overline{z}=z_{n_{i}+1}...z_{n_{k}}$ and 
 $\underline{w},\underline{w}'$ we have
 
 \[\frac{P^{k,i}(\overline{x},\underline{w})}{P^{k,i}(\overline{x},\underline{w}')}\geq c\]
\end{lemma}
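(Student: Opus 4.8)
The plan is to reduce everything to the bounded--distortion estimates already established in Lemma~\ref{fs.lema 2.4} and Corollary~\ref{fs.cor 2.4}. Abbreviating the numerator and denominator of $P^{k,i}$ by
$$
A(\underline{w})=\sum_{\underline{x}=x_{1}\ldots x_{n_{i}}}e^{\psi_{1}^{n_{k}}(\underline{x}\,\overline{x}\,\underline{w})},
\qquad
B(\underline{w})=\sum_{\underline{x}'=x_{1}\ldots x_{n_{k}}}e^{\psi_{1}^{n_{k}}(\underline{x}'\underline{w})},
$$
so that $P^{k,i}(\overline{x},\underline{w})=A(\underline{w})/B(\underline{w})$, I would factor the target ratio as
$$
\frac{P^{k,i}(\overline{x},\underline{w})}{P^{k,i}(\overline{x},\underline{w}')}
=\frac{A(\underline{w})}{A(\underline{w}')}\cdot\frac{B(\underline{w}')}{B(\underline{w})},
$$
so that it suffices to bound each of these two factors from below by a constant depending only on $\psi_1$.

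For the denominator factor I observe that $B(\underline{w})$ is exactly one of the full block sums covered by Corollary~\ref{fs.cor 2.4}: taking in that statement the Gibbs time $n_k$ and the shift $l=1$, the Birkhoff sum $\psi_1^{n_k-1+1}=\psi_1^{n_k}$ runs over the block $x_1\ldots x_{n_k}$, which terminates precisely at the Gibbs time $n_k$. This yields $B(\underline{w})/B(\underline{w}')\le C$, hence by symmetry $B(\underline{w}')/B(\underline{w})\ge C^{-1}$. For the numerator factor the complication is that $A(\underline{w})$ is only a \emph{partial} sum: the tail block $\overline{x}=x_{n_i+1}\ldots x_{n_k}$ is held fixed while only the prefix $x_1\ldots x_{n_i}$ is summed, so $A$ is not literally one of the sums of Corollary~\ref{fs.cor 2.4}. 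Here I would instead invoke Lemma~\ref{fs.lema 2.4} summand by summand: for each admissible full word $x_1\ldots x_{n_k}=\underline{x}\,\overline{x}$ (again with $l=1$ and Gibbs time $n_k$) one has $C^{-1}\le e^{\psi_1^{n_k}(\underline{x}\,\overline{x}\,\underline{w})}/e^{\psi_1^{n_k}(\underline{x}\,\overline{x}\,\underline{w}')}\le C$, and since a ratio of two sums of positive terms with uniformly comparable summands is itself comparable, I obtain $A(\underline{w})/A(\underline{w}')\ge C^{-1}$.

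Combining the two bounds gives $P^{k,i}(\overline{x},\underline{w})/P^{k,i}(\overline{x},\underline{w}')\ge C^{-2}$, so the lemma holds with $c=C^{-2}$ (equivalently $c=K^{-4}$, since $C=K^{2}$). The only point genuinely requiring care, and the step I expect to be the main obstacle, is the combination of the index bookkeeping with the passage from the pointwise estimate to the partial sum: one must verify that after the shift $l=1$ the governing block $x_1\ldots x_{n_k}$ ends exactly at the Gibbs time $n_k$, so that the distortion constant furnished by Lemma~\ref{fs.lema 2.4} is genuinely uniform over all summands and independent of $\overline{x}$, of $\underline{w},\underline{w}'$, and of the indices $i,k$. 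Once this uniformity is secured, the lower bound for the partial numerator sum follows exactly as for the full denominator sum, and the constant $c$ does not degenerate.
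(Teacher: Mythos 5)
Your proposal is correct and follows essentially the same route as the paper: the same factorization of $P^{k,i}(\overline{x},\underline{w})/P^{k,i}(\overline{x},\underline{w}')$ into the numerator ratio times the reciprocal denominator ratio, with each factor controlled termwise by the $K^{2}$ distortion bound coming from the Gibbs property, yielding $c=K^{-4}$. The paper states the bound as an upper estimate $\leq K^{4}$ and obtains the lower bound by the symmetry in $\underline{w},\underline{w}'$, but this is only a cosmetic difference from your direct lower-bound argument.
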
 
\begin{proof}

Since $\underline{x}\overline{x}\underline{w}$ and $\underline{x}\overline{x}\underline{w}'$ agree in $n_{k}$ places by the Gibbs Property
\eqref{eq.1gibbs} we have
$\frac{e^{\psi_{1}^{n_{k}}(\underline{x}\overline{x}\underline{w})}}{e^{\psi_{1}^{n_{k}}(\underline{x}\overline{x}\underline{w}')}}\leq
K^2$.

We can write

\begin{eqnarray*}
\frac{P^{k,i}(\overline{x},\underline{w})}{P^{k,i}(\overline{x},\underline{w'})}&=&
\frac{\sum_{\underline{x}=x_{1}...x_{n_{i}}}e^{\psi_{1}^{n_{k}}(\underline{x}\overline{x}\underline{w})}}
{\sum_{\underline{x}=x_{1}...x_{n_{i}}}e^{\psi_{1}^{n_{k}}(\underline{x}\overline{x}\underline{w'})}}
\cdot\frac{\sum_{\underline{x}'=x_{1}...x_{n_{k}}}e^{\psi_{1}^{n_{k}}(\underline{x}'\underline{w'})}}
{\sum_{\underline{x}'=x_{1}...x_{n_{k}}}e^{\psi_{1}^{n_{k}}(\underline{x}'\underline{w})}}
\leq K^4
\end{eqnarray*}

To finish the proof of Lemma \ref{fs.lema 4.5}, just take
$c=\frac{1}{K^{4}}$.
\end{proof}

\begin{lemma}\label{fs.pglema12}
With the same notations of Lemma \ref{fs.lema 4.5} we have

 \[\frac{u_{\ov x\ud w,i}(\ud z)}{u_{\ov x\ud w',i}(\ud z)}\leq e^{2\sum_{n=n_{k}-n_{i}}^{n_{k}}
 var_{n}(\psi_{1},\Pi^{-1}(\sigma^{n_k-n}(\ud z)))}\]
\end{lemma}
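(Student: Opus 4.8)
The plan is to start from Definition~\ref{fs.def3}, expand both $u_{\ov x\ud w,i}(\ud z)$ and $u_{\ov x\ud w',i}(\ud z)$, and write their quotient as a product of two factors: the ratio of the numerators (sums over words $\underline{x}=x_0\dots x_{n_i}$) times the reciprocal of the ratio of the denominators (sums over $\underline{x}'=x_1\dots x_{n_i}$). The structural fact I would exploit throughout is that the suffixes $\ov x\ud w$ and $\ov x\ud w'$ share the block $\ov x=x_{n_i+1}\dots x_{n_k}$ of length $n_k-n_i$, so that the full words $\underline{x}\ov x\ud w$ and $\underline{x}\ov x\ud w'$ (and likewise the ones built from $\underline{x}'$) agree on every coordinate up to $n_k$ and can differ only from coordinate $n_k+1$ onward.

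First I would reduce each ratio of sums to a term-by-term estimate. Since the two numerators run over the \emph{same} finite index set of words $\underline{x}$ and all summands are positive, the mediant inequality $\frac{\sum_a p_a}{\sum_a q_a}\le\max_a\frac{p_a}{q_a}$ reduces the numerator ratio to bounding $e^{\psi_1^{n_i+1}(\underline{x}\ov x\ud w)-\psi_1^{n_i+1}(\underline{x}\ov x\ud w')}$ for a single fixed $\underline{x}$, and symmetrically for the denominator. Then, for fixed $\underline{x}$, I would telescope the Birkhoff difference, writing $|\psi_1^{n_i+1}(\underline{x}\ov x\ud w)-\psi_1^{n_i+1}(\underline{x}\ov x\ud w')|\le\sum_{j=0}^{n_i}|\psi_1(\sigma^j(\underline{x}\ov x\ud w))-\psi_1(\sigma^j(\underline{x}\ov x\ud w'))|$, and control each summand by a single variation.

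The key point in estimating each summand is that, for each $j$, the two shifted points $\sigma^j(\underline{x}\ov x\ud w)$ and $\sigma^j(\underline{x}\ov x\ud w')$ begin with the common block $x_j\dots x_{n_k}$ and hence lie in one cylinder of length $n_k-j+1$; moreover this common prefix projects under $\pi$ onto $z_j\dots z_{n_k}$, i.e. onto the initial coordinates of $\sigma^{j}(\ud z)$. Writing $n=n_k-j$, each summand is therefore governed by $var_{n}(\psi_1,\Pi^{-1}(\sigma^{n_k-n}(\ud z)))$. Letting $j$ run from $0$ to $n_i$, the index $n$ sweeps the range $n_k-n_i,\dots,n_k$; collecting the numerator contribution and the denominator contribution then yields exactly the constant $2$ in the exponent $e^{2\sum_{n=n_k-n_i}^{n_k}var_n(\psi_1,\Pi^{-1}(\sigma^{n_k-n}(\ud z)))}$.

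The step I expect to be the main obstacle is the last one: the shifted points $\sigma^j(\underline{x}\ov x\ud w)$ are \emph{not} themselves preimages of $\sigma^{n_k-n}(\ud z)$, since their tails project onto $\Pi(\ud w)$ rather than the tail of $\sigma^{n_k-n}(\ud z)$, so one cannot simply read off $var_n(\psi_1,\cdot)$ on the fibre. To pass to the fibre variation I would fix a genuine point $\ud q\in\Pi^{-1}(\sigma^{n_k-n}(\ud z))$ lying in the same length-$(n_k-j+1)$ cylinder $[x_j\dots x_{n_k}]$ --- such a $\ud q$ exists precisely because $x_j\dots x_{n_k}$ projects onto $z_j\dots z_{n_k}$ --- and compare through $\psi_1(\ud q)$, so that both differences are measured against a point of the fibre and bounded by $var_n(\psi_1,\Pi^{-1}(\sigma^{n_k-n}(\ud z)))$. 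Care is needed here to keep the regularity measured on the fibre $\Pi^{-1}(\sigma^{n_k-n}(\ud z))$ rather than at the auxiliary points, and to track the index alignment so that the orders truly combine into the stated sum; the Gibbs property~\eqref{eq.1gibbs}, already used in Lemma~\ref{fs.lema 4.5}, ensures the finiteness that makes the mediant reduction legitimate.
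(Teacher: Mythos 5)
Your proposal follows the same route as the paper's proof: reduce the quotient to a termwise (mediant) comparison of the numerators and of the denominators, telescope the Birkhoff difference, and use that $\sigma^{j}(\ud x\ov x\ud w)$ and $\sigma^{j}(\ud x\ov x\ud w')$ share the block $x_j\dots x_{n_k}$, so that each summand is controlled by a variation whose order sweeps $n_k-n_i,\dots,n_k$ as $j$ runs from $0$ to $n_i$. The one place you go beyond the paper is the final step, and there your accounting does not close. The paper bounds each summand by the single quantity $var_{n_k-j}(\psi_1,\sigma^{j}(\ud x\ov x\ud w))$ --- the variation \emph{at} the actual point, which dominates the difference directly because $\sigma^{j}(\ud x\ov x\ud w')$ lies in its cylinder --- and then majorizes it by $var_{n_k-j}(\psi_1,\Pi^{-1}(\sigma^{j}(\ud z)))$; this gives exponent $1\cdot\sum$ from the numerator ratio and $1\cdot\sum$ from the denominator ratio, hence the stated $2$. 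You rightly point out that $\sigma^{j}(\ud x\ov x\ud w)$ need not belong to the fibre $\Pi^{-1}(\sigma^{j}(\ud z))$, since its tail projects to $\Pi(\ud w)$; but your repair --- triangulating through a genuine fibre point $\ud q$ in the same cylinder --- costs a factor of $2$ per summand, because $|\psi_1(\ud a)-\psi_1(\ud b)|\le 2\,var_n(\psi_1,\ud q)$ when $\ud a,\ud b$ merely share a length-$n$ cylinder with $\ud q$. Applied to both the numerator and the denominator this yields $e^{4\sum(\cdots)}$, not the claimed $e^{2\sum(\cdots)}$, so your assertion that the two contributions "yield exactly the constant $2$" does not follow from the argument you describe. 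This is harmless for everything downstream (Lemma~\ref{fs.pglema13}, Corollary~\ref{fs.cor14} and Theorem~\ref{fs.teo reg1} use the bound only up to a constant in the exponent), but as written your argument proves the lemma with $4$ in place of $2$; to keep the literal constant $2$ one must read the variation on the right-hand side as taken over all points of $\Sigma_1$ whose initial block projects onto $z_{n_k-n}\dots z_{n_k}$, which is what the paper's proof tacitly does.
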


\begin{proof}
Considering first the numerators, we have

\[\frac{\mbox{numerator}(u_{\ov x\ud w,i}(\ud z))}{\mbox{numerator}(u_{\ov x\ud w',i}(\ud z))}=
\frac{\sum_{\ud x=x_0... x_{n_i}}e^{\psi_1^{n_i+1}(\ud x\ov x w)}}{\sum_{\ud x=x_0... x_{n_i}}e^{\psi_1^{n_i+1}(\ud x\ov x w')}}\]
comparing termwise we see that $\sigma^{j}(\ud x\ov x w)$ and $\sigma^{j}(\ud x\ov x w')$ agree to $n_k-n_i+(n_i-j)$ places, and
thus for any choice of $\ov x$,

\[\frac{e^{\psi_1^{n_i+1}(\ud x\ov x w)}}{e^{\psi_1^{n_i+1}(\ud x\ov x w')}}\leq 
e^{\sum_{n=n_{k}-n_{i}}^{n_{k}}var_{n}(\psi_{1},\sigma^{n_k-n}(\ud x))}
\leq e^{\sum_{n=n_{k}-n_{i}}^{n_{k}}var_{n}(\psi_{1},\Pi^{-1}(\sigma^{n_k-n}(\ud z)))}\]
Summing over all choices of $\ov x$ and making the identical calculations for the denominator the
lemma is proved.
\end{proof}

\begin{corollary}
  \[\frac{u_{\ov x\ud w^{max},i}(\ud z)}{u_{\ov x\ud w^{min},i}(\ud z)}\leq 
  e^{2\sum_{n=n_{k}-n_{i}}^{n_{k}}var_{n}(\psi_{1},\Pi^{-1}(\sigma^{n_k-n}(\ud z)))}\]
where $\ud w^{max}$ and $x\ud w^{min}$ are concatenation $\ov x\ud w$ which maximizes and minimizes
$u_{\ov x\ud w,i}(\ud z)$, respectively.
\end{corollary}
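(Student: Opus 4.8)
The plan is to obtain this corollary as a direct specialization of Lemma~\ref{fs.pglema12}, whose estimate is uniform over \emph{all} pairs of tails $\ud w, \ud w'$. The only preliminary point to settle is that the maximizing and minimizing tails actually exist, so that the symbols $\ud w^{max}$ and $\ud w^{min}$ are meaningful rather than purely formal.

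First I would observe that, for fixed $\ov x = x_{n_i+1}\dots x_{n_k}$, the quantity $u_{\ov x\ud w,i}(\ud z)$ depends on the tail $\ud w \in \Sigma_1$ only through the finitely many values of $\psi_1$ entering the numerator $\sum_{\ud x = x_0\dots x_{n_i}} e^{\psi_1^{n_i+1}(\ud x\ov x\ud w)}$ and the denominator $\sum_{\ud x' = x_1\dots x_{n_i}} e^{\psi_1^{n_i}(\ud x'\ov x\ud w)}$. Since $\psi_1$ is continuous, each of these sums is finite, and the denominator is strictly positive (the surjectivity of $\pi$ guarantees at least one admissible word $\ud x'$), the map $\ud w \mapsto u_{\ov x\ud w,i}(\ud z)$ is continuous and positive on $\Sigma_1$. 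As $\Sigma_1 = \{1,\dots,k_1\}^{\bb N}$ is compact, this function attains both its maximum and its minimum; I denote by $\ud w^{max}$ and $\ud w^{min}$ the tails realizing them, so that $\ov x\ud w^{max}$ and $\ov x\ud w^{min}$ are exactly the concatenations maximizing and minimizing $u_{\ov x\ud w,i}(\ud z)$, and the ratio in the statement is well defined.

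Having fixed these two tails, I would simply invoke Lemma~\ref{fs.pglema12} with the particular choice $\ud w = \ud w^{max}$ and $\ud w' = \ud w^{min}$. Because the inequality
\[
\frac{u_{\ov x\ud w,i}(\ud z)}{u_{\ov x\ud w',i}(\ud z)} \leq e^{2\sum_{n=n_k-n_i}^{n_k} var_n(\psi_1,\Pi^{-1}(\sigma^{n_k-n}(\ud z)))}
\]
holds for every pair of tails, it holds in particular for this pair, which is precisely the claim. There is no genuine obstacle here: the content is entirely carried by Lemma~\ref{fs.pglema12}, and the corollary only records the sharpest instance of that bound, namely the ratio of the largest to the smallest value of $u_{\ov x\ud w,i}(\ud z)$ over all tails $\ud w$. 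The sole point worth spelling out is the compactness-plus-continuity argument guaranteeing that the extremal tails exist; even if one preferred to bypass it, replacing $\max$ and $\min$ by $\sup$ and $\inf$ would leave the inequality intact.
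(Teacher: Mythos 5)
Your proposal is correct and matches the paper's treatment: the paper states this corollary without proof, precisely because it is the immediate specialization of Lemma~\ref{fs.pglema12} (whose bound is uniform over all pairs of tails) to the extremal pair $\ud w^{max},\ud w^{min}$. Your additional remark that compactness of $\Sigma_1$ and continuity of $\ud w\mapsto u_{\ov x\ud w,i}(\ud z)$ guarantee the extrema are attained is a sensible clarification the paper leaves implicit, and as you note the inequality would survive with $\sup$ and $\inf$ in any case.
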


\begin{lemma}\label{fs.pglema13}
Let be $(n_{i})_{i\geq 1}$ the sequence of Gibbs times of $\underline{z}$. For $k\geq i$, we have

\begin{eqnarray}\label{fs.desi 3}
 \lambda_{k}(\underline{z})\leq c\cdot e^{2\sum_{n=0}^{n_{i}}
 var_{n_k-n}(\psi_{1},\Pi^{-1}(\sigma^{n}(\ud z)))}+(1-c)\lambda_{i}(\underline{z})
\end{eqnarray}

\end{lemma}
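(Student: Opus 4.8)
The plan is to combine the convex-combination structure from Lemma~\ref{fs.lema3.7} with the two comparison estimates just established, namely the lower bound on the probability vectors $P^{k,i}$ from Lemma~\ref{fs.lema 4.5} and the ratio bound for the $u_{\ov x\ud w,i}$ from Lemma~\ref{fs.pglema12}. Starting from Corollary~\ref{fs.cor3.8}, I would write
\[
\frac{u_{\underline{w},k}(\underline{z})}{u_{\underline{w}',k}(\underline{z})}=
\frac{\sum_{\overline{x}}u_{\overline{x}\underline{w},i}(\underline{z})\,P^{k,i}(\overline{x},\underline{w})}
{\sum_{\overline{x}}u_{\overline{x}\underline{w}',i}(\underline{z})\,P^{k,i}(\overline{x},\underline{w}')},
\]
and the goal is to bound the supremum of this over $\underline{w},\underline{w}'$, which is $\lambda_k(\ud z)$. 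The heuristic is that the two probability vectors $P^{k,i}(\cdot,\underline{w})$ and $P^{k,i}(\cdot,\underline{w}')$ are comparable up to the factor $c$, so one can peel off a common part that already sits inside the interval $\Lambda_i(\ud z)$ (hence contributes at most $\lambda_i(\ud z)$ to the ratio), while the remaining defect is controlled by the distortion factor from Lemma~\ref{fs.pglema12}.

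Concretely, I would use the standard trick for mixing two probability measures against a bounded ratio: since $P^{k,i}(\overline{x},\underline{w}')\ge c\,P^{k,i}(\overline{x},\underline{w})$ by Lemma~\ref{fs.lema 4.5}, I can split
\[
P^{k,i}(\overline{x},\underline{w}')=c\,P^{k,i}(\overline{x},\underline{w})+\big(P^{k,i}(\overline{x},\underline{w}')-c\,P^{k,i}(\overline{x},\underline{w})\big),
\]
where the bracketed term is nonnegative and the two pieces have total masses $c$ and $1-c$ respectively. The ``aligned'' part $c\,P^{k,i}(\cdot,\underline{w})$ paired against $u_{\overline{x}\underline{w}',i}(\underline{z})$ produces, after comparing to $u_{\overline{x}\underline{w},i}(\underline{z})$ via Lemma~\ref{fs.pglema12}, a contribution controlled by the distortion factor $\exp\big(2\sum_{n=n_k-n_i}^{n_k}var_n(\psi_1,\Pi^{-1}(\sigma^{n_k-n}(\ud z)))\big)$; after reindexing $n\mapsto n_k-n$ this is exactly the factor $\exp\big(2\sum_{n=0}^{n_i}var_{n_k-n}(\psi_1,\Pi^{-1}(\sigma^{n}(\ud z)))\big)$ appearing in the claim. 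The residual part, of relative mass $1-c$, involves a genuine ratio of the $u_{\overline{x}\cdot,i}$ against two possibly different weightings, and is bounded by $\lambda_i(\ud z)$ since by definition every such ratio lies in $[\lambda_i(\ud z)^{-1},\lambda_i(\ud z)]$. Assembling the two pieces gives the affine bound $c\cdot(\text{distortion factor})+(1-c)\lambda_i(\ud z)$, and taking the supremum over $\underline{w},\underline{w}'$ yields $\lambda_k(\ud z)$ on the left.

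The main obstacle I anticipate is making the splitting rigorous at the level of the ratio rather than a single measure: one must carefully track that the numerator and denominator sums share the same index set $\overline{x}=x_{n_i+1}\dots x_{n_k}$, and that the bound $u_{\overline{x}\underline{w},i}/u_{\overline{x}\underline{w}',i}\le(\text{distortion})$ from Lemma~\ref{fs.pglema12} can be applied termwise before summation. A clean way to organize this is to fix the $\underline{w},\underline{w}'$ nearly achieving the supremum $\lambda_k(\ud z)$, then estimate the numerator $\sum_{\overline{x}}u_{\overline{x}\underline{w},i}P^{k,i}(\overline{x},\underline{w})$ from above by replacing $P^{k,i}(\overline{x},\underline{w})$ with $P^{k,i}(\overline{x},\underline{w}')/c$ on the ``bad'' indices and by the distortion bound on the $u$'s elsewhere, keeping the denominator $\sum_{\overline{x}}u_{\overline{x}\underline{w}',i}P^{k,i}(\overline{x},\underline{w}')$ fixed as a reference; the convexity of the resulting expression then forces it into the stated affine combination. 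One should also verify that $n_i<n_k$ being consecutive-or-not Gibbs times does not matter here, since the estimate only uses that both are Gibbs times of $\ud z$, which is guaranteed on the full-measure set $E$.
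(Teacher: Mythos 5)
Your decomposition is exactly the one the paper uses: express the ratio via Corollary~\ref{fs.cor3.8}, split the probability vector as $P^{k,i}(\cdot,\ud w')=c\,P^{k,i}(\cdot,\ud w)+\bigl(P^{k,i}(\cdot,\ud w')-c\,P^{k,i}(\cdot,\ud w)\bigr)$ (nonnegative by Lemma~\ref{fs.lema 4.5}, with masses $c$ and $1-c$), control the aligned piece termwise by Lemma~\ref{fs.pglema12}, and bound the residual piece by $\lambda_i(\ud z)$. The only place where your sketch is thinner than it should be is the final assembly, which you attribute to ``convexity''. Writing $D=e^{2\sum_{n=0}^{n_i}var_{n_k-n}(\psi_1,\Pi^{-1}(\sigma^n(\ud z)))}$, the mediant $\frac{x_1+x_2}{y_1+y_2}$ is indeed a convex combination of $x_1/y_1\le D$ and $x_2/y_2\le\lambda_i(\ud z)$, but with weights $y_1/(y_1+y_2)$ and $y_2/(y_1+y_2)$, which are \emph{not} $c$ and $1-c$; so the affine bound $cD+(1-c)\lambda_i(\ud z)$ does not follow immediately, and can fail for the wrong ordering of $D$ and $\lambda_i(\ud z)$.

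The paper closes this in two steps. First it disposes of the case $\lambda_i(\ud z)\le D$ at the outset, where the claim is trivial because $\lambda_k\le\lambda_i=c\lambda_i+(1-c)\lambda_i\le cD+(1-c)\lambda_i$ by monotonicity of $(\lambda_k)_k$ (Lemma~\ref{fs.lema 1.2}). In the remaining case it bounds $(1-c)P_1\cdot A\le(1-c)\max_l a_l$ and $(P_2-cP_1)\cdot B\ge(1-c)\min_l b_l$, and then replaces $P_1\cdot B$ by $\min_l b_l$ in both numerator and denominator of the aligned piece (Lemma~\ref{fslem5} together with the elementary mediant monotonicity fact); this normalizes the denominator masses to exactly $c\min_l b_l$ and $(1-c)\min_l b_l$, so the weights become exactly $c$ and $1-c$ and one gets $cD+(1-c)\max_l a_l/\min_l b_l\le cD+(1-c)\lambda_i(\ud z)$. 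With that case distinction and normalization added, your argument is the paper's proof.
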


\begin{proof}

Suppose that for $k\geq i$

\[\lambda_{i}(\underline{z})= \max_{\ov x}\left(\frac{u_{\ov x\ud w^{max},i}
(\underline{z})}{u_{\ov x\ud w^{min},i}(\underline{z})}\right)\leq 
 e^{2\sum_{n=0}^{n_{i}}
	var_{n_k-n}(\psi_{1},\Pi^{-1}(\sigma^{n}(\ud z)))}.\] 
Then, by Lemma \ref{fs.lema 1.2}, $(\lambda_k)_k$ is decreasing and we have that
 
 \begin{eqnarray*}
 \lambda_{k}(\underline{z})&\leq& \lambda_{i}(\underline{z})=c\lambda_{i}(\underline{z})+(1-c)\lambda_{i}(\underline{z})
  \leq c \cdot  e^{2\sum_{n=0}^{n_{i}}
  	var_{n_k-n}(\psi_{1},\Pi^{-1}(\sigma^{n}(\ud z)))}+(1-c)\lambda_{i}(\underline{z}).
 \end{eqnarray*}
Proving the claim in the Lemma. Now assume that
$\lambda_{i}(\underline{z})= \max_{\ov x}\left(\frac{u_{\ov x\ud w^{max},i}
(\underline{z})}{u_{\ov x\ud w^{min},i}(\underline{z})}\right)>  e^{2\sum_{n=0}^{n_{i}}
var_{n_k-n}(\psi_{1},\Pi^{-1}(\sigma^{n}(\ud z)))}$.

By Corollary \ref{fs.cor3.8} we have

\begin{eqnarray}\label{fs eq3}
\frac{u_{\underline{w}^{max},k}(\underline{z})}{u_{\underline{w}^{min},k}(\underline{z})}&=&\frac{\sum_{\overline{x}=x_{n_{i}+1}...x_{n_{k}}}u_{\overline{x}
 \underline{w}^{max},i}(\underline{z})P^{k,i}(\overline{x},\underline{w}^{max})}{\sum_{\overline{x}=x_{n_{i}+1} \nonumber
 ...x_{n_{k}}}u_{\overline{x}\underline{w}^{min},i}(\underline{z})P^{k,i}(\overline{x},\underline{w}^{min})}
 \end{eqnarray}
 
To simplify notation, we will fix $\underline{z}$ and enumerate the set $\ov X$ of all possible choices $\overline{x} \in \Pi^{-1}(\ud z)$.  So, the sum $\sum_{\overline{x}=x_{n_{i}+1}...x_{n_{k}}}$ can be denoted by a sum $\sum_{l \in \ov X}$. 
  
Let $P_{1}$ be the probability vector $P^{k,i}(\overline{x},\underline{w}^{max})$ and $P_{2}$ the vector 
 $P^{k,i}(\overline{x},\underline{w}^{min})$. Also denote by $A$  the vector $(u_{\overline{x}
 \underline{w}^{\max},i}(\underline{z}))$ and $B$ the vector $(u_{\overline{x}\underline{w}^{\min},i}(\underline{z}))$, where   $\overline{x}$ run over
all choices in $\ov X$. If $a_{l}$ and $b_{l}$ represent the $l$-th term of $A$ and $B$, 
 respectively, and $I$ is vector of length $|\ov X|$ with $1$ in all of its coordinates,  we can summarize the above equality as
 
 \begin{eqnarray}\label{eq.probvectors}
  \frac{u_{\underline{w}^{max},k}(\underline{z})}{u_{\underline{w}^{min},k}(\underline{z})}&=&
  \frac{P_1 \cdot A}{P_2\cdot B}=\frac{cP_1\cdot A+(1-c)P_1 \cdot A}{c P_1\cdot B+(P_2-cP_1)\cdot B}
 \end{eqnarray}
where the signal $\cdot$ is represent the inner product in $\mathbb{R}^{|\ov X|}$.

 By Lemma \ref{fs.pglema12}, $a_l\leq b_l \, e^{2\sum_{n=0}^{n_{i}}
 	var_{n_k-n}(\psi_{1},\Pi^{-1}(\sigma^{n}(\ud z)))}$ for each $l$. Thus, 

 $$
P_1\cdot A\leq  e^{2\sum_{n=0}^{n_{i}}
	var_{n_k-n}(\psi_{1},\Pi^{-1}(\sigma^{n}(\ud z)))} P_1\cdot B.
$$ Therefore, in  the Equation \ref{eq.probvectors}  we have
 
  \begin{eqnarray*}
  \frac{u_{\underline{w}^{max},k}(\underline{z})}{u_{\underline{w}^{min},k}(\underline{z})}&\leq&\frac{c  e^{2\sum_{n=0}^{n_{i}}
  		var_{n_k-n}(\psi_{1},\Pi^{-1}(\sigma^{n}(\ud z)))}P_1\cdot B+(1-c)P_1 \cdot A}{c P_1\cdot B+(P_2-cP_1)\cdot B}\\
  &\leq& \frac{c  e^{2\sum_{n=0}^{n_{i}}
  		var_{n_k-n}(\psi_{1},\Pi^{-1}(\sigma^{n}(\ud z)))}P_1\cdot B+(1-c)P_1 \cdot I \max_l a_l}{c P_1\cdot B+(P_2-cP_1)\cdot I \min_l b_l}
 \end{eqnarray*}
 
We prove the following lemma
\begin{lemma}\label{fslem5}
 Putting $\alpha_1=c  e^{2\sum_{n=0}^{n_{i}}
 	var_{n_k-n}(\psi_{1},\Pi^{-1}(\sigma^{n}(\ud z)))}P_1\cdot B$, $\beta_1=cP_1\cdot B$,
 $\alpha_2=(1-c)P_1\cdot I \max_l a_l$ and $\beta_2=(P_2-cP_1)\cdot I \min_l b_l$. Then $\frac{\alpha_1}{\beta_1}<\frac{\alpha_2}{\beta_2}$.
\end{lemma}
\begin{proof}
	
\begin{eqnarray*}
 \alpha_2 \beta_1=cP_1\cdot B(1-c)P_1\cdot I \max_l a_l&\geq& c  e^{2\sum_{n=0}^{n_{i}}
 	var_{n_k-n}(\psi_{1},\Pi^{-1}(\sigma^{n}(\ud z)))}
 P_1\cdot B(1-c)P_1\cdot I \min_l b_l\\
 &\geq& c  e^{2\sum_{n=0}^{n_{i}}
 	var_{n_k-n}(\psi_{1},\Pi^{-1}(\sigma^{n}(\ud z)))} P_1\cdot B(P_2-cP_1)\cdot I \min_l b_l=
 \alpha_1 \beta_2
\end{eqnarray*}

\end{proof}

It is an elementary fact that $\displaystyle\frac{x_1}{y_1}<\frac{x_2}{y_2}$ implies
 $\displaystyle\frac{cx_1+x_2}{cy_1+y_2}>\frac{x_1+x_2}{y_1+y_2}$, $\forall$ $c\in(0,1)$ and positive real numbers $x_1,x_2,
 y_1$ and $y_2$. Using Lemma~\ref{fslem5} we have that

\begin{eqnarray*}
  \frac{u_{\underline{w}^{max},k}(\underline{z})}{u_{\underline{w}^{min},k}(\underline{z})}&\leq&
  \frac{c  e^{2\sum_{n=0}^{n_{i}}
  		var_{n_k-n}(\psi_{1},\Pi^{-1}(\sigma^{n}(\ud z)))}P_1\cdot I\min_l b_l+(1-c)P_1 \cdot I\max_l a_l}
  {c P_1\cdot I \min_l b_l+(P_2-cP_1)\cdot I \min_l b_l}
   \end{eqnarray*}
As $P_1$ and $P_2$ are probability vectors, then $1=P_1 \cdot I=P_2\cdot I$. Dividing by $\min_l b_l$ we have
\begin{eqnarray*}
\frac{u_{\underline{w}^{max},k}(\underline{z})}{u_{\underline{w}^{min},k}(\underline{z})}&\leq&
\frac{c e^{2\sum_{n=0}^{n_{i}}
		var_{n_k-n}(\psi_{1},\Pi^{-1}(\sigma^{n}(\ud z)))}+ (1-c)\frac{\max_{i}(a_{i})}{\min_{i}(b_{i})}}
 {c+(1-c)}\\
 &=&c e^{2\sum_{n=0}^{n_{i}}
 	var_{n_k-n}(\psi_{1},\Pi^{-1}(\sigma^{n}(\ud z)))}+ (1-c)\frac{\max_{i}(a_{i})}{\min_{i}(b_{i})} 
\end{eqnarray*}
resulting, $\lambda_{k}(\underline{z})\leq c e^{2\sum_{n=0}^{n_{i}}
	var_{n_k-n}(\psi_{1},\Pi^{-1}(\sigma^{n}(\ud z)))}+(1- c)\lambda_{i}(\underline{z})$
as we desire.

\end{proof}

\begin{corollary}\label{fs.cor14}
In $\nu-$a.e. $\underline{z}\in \Sigma_{2}$, the sequence $(\lambda_{k}(\underline{z}))_{k}$ converges to $1$.
\end{corollary}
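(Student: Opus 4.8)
The plan is to combine the monotonicity supplied by Lemma~\ref{fs.lema 1.2} with the recursive estimate of Lemma~\ref{fs.pglema13}, and then to exploit a carefully ordered double limit in the two indices $i$ and $k$. First I would record that, by Lemma~\ref{fs.lema 1.2}, for $\nu$-a.e.\ $\ud z$ the sequence $(\lambda_{k}(\ud z))_{k}$ is decreasing and bounded below by $1$; hence it converges to some limit $L=L(\ud z)\geq 1$, and the whole task reduces to proving that $L\leq 1$.

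The driving inequality is the one furnished by Lemma~\ref{fs.pglema13}: fixing $i$ and letting $n_{i}$ be the corresponding Gibbs time of $\ud z$, for every $k\geq i$ we have
$$
\lambda_{k}(\ud z)\leq c\, e^{2\sum_{n=0}^{n_{i}} var_{n_k-n}(\psi_{1},\Pi^{-1}(\sigma^{n}(\ud z)))}+(1-c)\lambda_{i}(\ud z),
$$
where $c\in(0,1)$ is the constant of Lemma~\ref{fs.lema 4.5}. The crucial observation is that, with $i$ held fixed, the exponential factor tends to $1$ as $k\to\infty$. Indeed the sum in the exponent has exactly $n_{i}+1$ terms, a number independent of $k$, while each summand satisfies $var_{n_k-n}(\psi_{1},\Pi^{-1}(\sigma^{n}(\ud z)))\leq var_{n_k-n}(\psi_{1})\leq var_{n_k-n_i}(\psi_{1})$ for $0\leq n\leq n_{i}$, using that the global variation is nonincreasing in its index. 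Since $\psi_{1}$ is continuous on the compact space $\Sigma_{1}$ it is uniformly continuous, so $var_{m}(\psi_{1})\to 0$ as $m\to\infty$; because $\ud z\in E$ has infinitely many Gibbs times we have $n_{k}-n_{i}\to\infty$ as $k\to\infty$, and therefore the entire exponent tends to $0$.

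Letting $k\to\infty$ in the displayed inequality thus gives $L\leq c+(1-c)\lambda_{i}(\ud z)$ for every fixed $i$. Finally I would let $i\to\infty$; since $\lambda_{i}(\ud z)\to L$, this produces $L\leq c+(1-c)L$, that is $cL\leq c$, whence $L\leq 1$. Combined with $L\geq 1$ this forces $L=1$, which is exactly the assertion $\lambda_{k}(\ud z)\to 1$ for $\nu$-a.e.\ $\ud z$.

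The step I expect to be most delicate is the control of the exponent, and it rests entirely on an asymmetry between the two indices: one must fix $i$ first, so that the number of variation terms stays bounded, and only afterwards send $k\to\infty$, so that each variation index $n_{k}-n$ grows without bound and the uniform continuity of $\psi_{1}$ can be invoked. Reversing the order of the limits would be fatal, since the number of summands would then grow with $k$ and one could no longer conclude that the sum vanishes. Making this two-stage limit rigorous, and verifying the domination $var_{n_k-n}(\psi_{1},\Pi^{-1}(\sigma^{n}(\ud z)))\leq var_{n_k-n_i}(\psi_{1})$ uniformly over the finitely many $n$, is where the genuine care is needed; everything else is bookkeeping on a convergent monotone sequence.
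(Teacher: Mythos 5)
Your proof is correct, and it takes a genuinely different route through the double limit than the paper does. Both arguments rest on the same two ingredients --- the monotonicity of $(\lambda_k(\ud z))_k$ from Lemma~\ref{fs.lema 1.2} and the recursive bound of Lemma~\ref{fs.pglema13} --- but the paper couples the two indices by taking $i=k$ and comparing $\lambda_{2k}$ with $\lambda_k$, so that the exponent becomes $2\sum_{n=n_{2k}-n_k}^{n_{2k}} var_n(\psi_1)$, a sum whose number of terms ($n_k+1$) grows with $k$; the paper then asserts this tends to $0$ by continuity of $\psi_1$ alone, which is delicate (for a general continuous potential with, say, $var_n(\psi_1)\sim 1/n$, a sum of $n_k$ terms starting at index $n_{2k}-n_k$ need not vanish). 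You instead freeze $i$, send $k\to\infty$ first --- so the exponent is a sum of the \emph{fixed} number $n_i+1$ of terms, each dominated by $var_{n_k-n_i}(\psi_1)\to 0$ by uniform continuity --- obtain $L\leq c+(1-c)\lambda_i(\ud z)$, and only then let $i\to\infty$ to get $cL\leq c$. This ordering is exactly the asymmetry you flag, and it buys a cleaner argument: your version needs nothing beyond continuity of $\psi_1$ and the fact that $n_k\to\infty$ on $G$, whereas the paper's coupling implicitly leans on the stronger decay of variations that only enters later (in Theorem~\ref{fs.teo reg1} and its corollaries). The one small point to keep explicit is that $c=K^{-4}\in(0,1]$ from Lemma~\ref{fs.lema 4.5}, so the cancellation $cL\leq c$ is legitimate; everything else in your write-up is sound.
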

\begin{proof}

For $k>i$, we have $n_k-n_i=(n_k-n_{k-1})+(n_{k-1}-n_{k-2})+...+(n_{i+1}-n_i)\geq k-i$. In particular, $n_{2k}-n_k\geq k$. By Lemma \ref{fs.pglema13}, we have

  \begin{eqnarray*}
  \lambda_{2k}(\underline{z})&\leq& c\,e^{2\sum_{n=0}^{n_{k}}var_{n_{2k}-n}(\psi_{1},\Pi^{-1}(\sigma^{n}(\ud z)))}+(1- c)\,\lambda_{k}(\underline{z})\Rightarrow\\
 \lambda_{2k}(\underline{z})-\lambda_{k}(\underline{z})&\leq&c\,e^{2\sum_{n=0}^{n_{k}}var_{n_{2k}-n}(\psi_{1},\Pi^{-1}(\sigma^{n}(\ud z)))}-c\lambda_k(\ud z)
 \end{eqnarray*}

 Since $\psi_{1}$ is continuous at $z$, it follows that $e^{\sum_{n=0}^{n_{k}}var_{n_{2k}-n}(\psi_{1},\Pi^{-1}(\sigma^{n}(\ud z)))}\leq 
 e^{\sum_{n=n_{2k}-n_{k}}^{n_{2k}}var_{n}(\psi_{1})}\rightarrow 1$, when $k\rightarrow \infty$.
 As the sequence $(\lambda_k(\ud z))_k$ is decreasing and 
$\lim_{k\rightarrow \infty}\lambda_k(\underline{z})\leq 1$.
 By Lemma \ref{fs.lema 1.2}  we have that $\lim_{k\rightarrow \infty}\lambda_{k}(\underline{z})\geq1$ and, therefore,  
 $\lambda_{k}(\underline{z})\rightarrow 1$.
\end{proof}

To finish the proof of the statement of main result of this section, we prove the following lemma.
\begin{lemma}\label{f.s. cont}
 For $\nu-$a.e. $\underline{z}\in \Sigma_{2}$ the limit $u(\underline{z})=\lim_{k\rightarrow \infty}u_{\underline{w},k}(\underline{z})$ exist and  $\psi_{2}(\ud z):=\log(u(\underline{z}))$ is continuous a.e.
\end{lemma}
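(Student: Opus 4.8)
The plan is to deduce both assertions from the nested-interval machinery already established, principally Lemma~\ref{fs. nested}, Lemma~\ref{fs.lema 1.2} and Corollary~\ref{fs.cor14}.

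First I would record a uniform two-sided bound for the functions $u_{\underline w,k}$. Writing $M:=\|\psi_1\|_\infty<\infty$ and factoring each numerator term as $e^{\psi_1^{n_k+1}(\underline x\underline w)}=e^{\psi_1(\underline x\underline w)}\,e^{\psi_1^{n_k}(\sigma(\underline x\underline w))}$, where $\sigma(\underline x\underline w)=(x_1\dots x_{n_k})\underline w$ is exactly a word appearing in the denominator of $u_{\underline w,k}(\underline z)$, summation over the finitely many choices of $x_0$ with $\pi(x_0)=z_0$ yields $e^{-M}\le u_{\underline w,k}(\underline z)\le k_1e^{M}$ for all $k$, $\underline w$ and $\underline z$. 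In particular the nested closed intervals $\Lambda_k(\underline z)$ from Lemma~\ref{fs. nested} stay inside $[e^{-M},k_1e^{M}]$, and their $\log$-diameter equals $\log\lambda_k(\underline z)$. By Corollary~\ref{fs.cor14} we have $\lambda_k(\underline z)\to 1$ for $\nu$-a.e. $\underline z$, so $\bigcap_k\Lambda_k(\underline z)$ shrinks to a single point, which we call $u(\underline z)\in[e^{-M},k_1e^{M}]$. Since $u_{\underline w,k}(\underline z)\in\Lambda_k(\underline z)$ for every $\underline w$ and every $k$, the limit $\lim_k u_{\underline w,k}(\underline z)=u(\underline z)$ exists and is independent of $\underline w$; this is precisely Proposition~\ref{fs.prep.1.1}, and it shows that $\psi_2=\log u$ is well defined and bounded a.e.

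For the continuity I would fix a $\nu$-typical $\underline z$ (so that $\lambda_k(\underline z)\to 1$) and, given $\varepsilon>0$, choose $k$ with $\log\lambda_k(\underline z)<\varepsilon$. The key structural observation is that $u_{\underline w,k}(\underline z)$ depends only on the finite block $z_0\dots z_{n_k(\underline z)}$ and on the value $n_k(\underline z)$: the sums defining it range over words projecting onto $z_0\dots z_{n_k}$ and over the fixed tail $\underline w$, and involve no coordinate $z_j$ with $j>n_k$. Hence, for any $\underline z'$ that agrees with $\underline z$ through coordinate $n_k(\underline z)$ and satisfies $n_k(\underline z')=n_k(\underline z)$, one gets $u_{\underline w,k}(\underline z')=u_{\underline w,k}(\underline z)$ for all $\underline w$, and therefore $\Lambda_k(\underline z')=\Lambda_k(\underline z)$. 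Since $u(\underline z)$ and $u(\underline z')$ both lie in this common interval of $\log$-diameter less than $\varepsilon$, we obtain $|\psi_2(\underline z)-\psi_2(\underline z')|\le\log\lambda_k(\underline z)<\varepsilon$, which is the claimed continuity along the full-measure set.

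The main obstacle is exactly the requirement $n_k(\underline z')=n_k(\underline z)$ for $\underline z'$ in a small cylinder around $\underline z$: Gibbs times are defined through the measures of cylinders in $\Sigma_1$ and through the Birkhoff sums $\psi_1^n(\underline x)$, which depend on the entire preimage and not merely on finitely many coordinates. To handle this I would work inside a full-measure subset $E'\subseteq E$, built from the regularity of $\Pi$ in Definition~\ref{fs hyp} and the construction of $E$, on which, for $\nu$-a.e. $\underline z$, the first $k$ Gibbs inequalities \eqref{eq.1gibbs} hold with \emph{strict} slack relative to the fixed constant $K$. Because changing the tail of a preimage alters each $\psi_1^n$ by at most a sum of terms $var_{j}(\psi_1)$, which is small once the length of agreement exceeds $n_k$, these strict inequalities persist under small perturbations, so the first $k$ Gibbs times are locally constant on $E'$. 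Establishing this local constancy, and thereby the continuity of $\psi_2|_{E'}$, is the delicate point; everything else reduces to the squeezing furnished by $\lambda_k(\underline z)\to 1$.
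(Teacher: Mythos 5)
Your proposal follows the paper's own argument essentially verbatim: existence of $u(\underline{z})$ is deduced from the monotonically nested intervals $\Lambda_k(\underline{z})$ together with $\lambda_k(\underline{z})\to 1$ (Corollary~\ref{fs.cor14}), and continuity from the identification $\Lambda_k(\underline{z})=\Lambda_k(\underline{z}')$ for $\underline{z}'$ in a sufficiently deep cylinder, which squeezes $|\psi_2(\underline{z})-\psi_2(\underline{z}')|\le\log\lambda_k(\underline{z})$. The ``obstacle'' you isolate --- that one must know $n_k(\underline{z}')=n_k(\underline{z})$ before the sums defining $u_{\underline{w},k}$ at $\underline{z}$ and $\underline{z}'$ can be identified --- is genuine, but the paper dispatches it with the one-line assertion that $\Lambda_{k}(\underline{z})=\Lambda_{k}(\underline{z}')$ ``by definition of sequential Gibbs measure'', so on the only delicate step your treatment is, if anything, more explicit than the original.
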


\begin{proof}
For the existence of the limit it  sufficient to use the fact that $\lambda_{k}(\underline{z})\rightarrow 1$ in $\nu-$a.e. $\underline{z}\in \Sigma_{2}$. 

Now, to prove the continuity almost everywhere. Let $(n_{i})_{i\geq 1}$ be the sequence of Gibbs times of $\underline{z}$ and $n$ such that $n\geq n_{k}$ (note that $k\leq n$). 
Let $\ud z'\in [z_0...z_n]$. By definition of sequential Gibbs measure we have that $\Lambda_{k}(\underline{z})=\Lambda_{k}(\underline{z}')$. And the fact
the sequence $(\Lambda_{n}(\underline{z}))_{n}$ be monotonically nested, both $u(\underline{z})$ and $u(\underline{z}')$ are in interval
$\Lambda_{k}(\underline{z})$. Therefore,
 \begin{eqnarray*}
\frac{u(\underline{z})}{u(\underline{z}')}&\leq& 
 \sup_{\ud w, \ud w' \in \Sigma_1} \left\{\frac{u_{\underline{w},k}(\underline{z})}
 {u_{\underline{w}',k}(\underline{z}')}\right\}
  =\sup_{\ud w, \ud w' \in \Sigma_1} \left\{\frac{u_{\underline{w},k}(\underline{z})}
 {u_{\underline{w}',k}(\underline{z})}\right\}=\lambda_k(\ud z)
 \end{eqnarray*}
 Then 
 $|\log(u(\ud z))-\log(u(\ud z'))|\leq \log \lambda_k(\ud z)$,
that implies $\psi_2=\log u$ is continuous at $\underline{z}\in \Sigma_{2}$. Therefore, is continuous at  $\nu$- a.e. $\ud z \in \Sigma_2$.
\end{proof}

\section{Proof of Theorem \ref{fs.teo reg1}\label{subsec. reg.}: Modulus of continuity of $\psi_2$}


  Take $k_0(\ud x)$ such that $k\geq k_0(\ud x)$ implies  $n_k(\ud x)\leq b k$. Thus, we have that for a.e. $\ud z \in  \Sigma_2$, if $\ud z' \in [z_0,\dots,z_{b k}]$
\begin{equation}\label{eq.0}
|\psi_2(\ud z)-\psi_2(\ud z')| \leq \log \lambda_k,
\end{equation}
	 where   $[x]$ denotes the greatest integer smaller or equal to $x$. It follows directly that the speed of convergence  of $\log \lambda_k$ to zero give us the modulus of continuity of $\phi_2$ at $\ud z$.

To estimate $\log \lambda_k$ we observe that  given $k>k_0$ and $l\geq 2$,  for every  $2 \leq i \leq l$ we have that $n_{ik}-n_{(i-1)k}\geq k$ and that $n_{ik}\leq b ik \leq blk$. Thus, by the integral test for series we have that:
\begin{equation}\label{eq.somateta}
\sum_{j=n_{ik}-n_{(i-1)k}}^{n_{ik}}  var_{j}(\psi_{1},\Pi^{-1}(\sigma^{n_{ik}-j}(\ud z))) \leq  \sum_{j= k}^{n_{ik}}  var_{j}(\psi_{1},\Pi^{-1}(\sigma^{n_{ik}-j}(\ud z))) \leq  f(k) n_{ik} \leq f(k)blk. 
\end{equation}

By Lemma \ref{fs.pglema13},  for $\nu$-a.e $\ud z \in \Sigma_2$, for every $k>k_0(z)$ and $i=2,\dots,l$:

$$
  \lambda_{ik}(\ud z)\leq c e^{2\sum_{j=n_{ik}-n_{(i-1)k}}^{n_{ik}}  var_{j}(\psi_{1},\Pi^{-1}(\sigma^{n_{ik}-j}(\ud z)))}
  +\alpha\lambda_{(i-1)k}(\ud z).
$$ 		

Thus, using Equation~\eqref{eq.somateta}:
		
$$
  \lambda_{ik}(\ud z)\leq c e^{2f(k)blk}
  +\alpha\lambda_{(i-1)k}(\ud z).
$$ 		
		
		Multiplying by $\alpha^{l-i}$ both sides:
	$$  
	\alpha^{l-i}\lambda_{ik}(\ud z)\leq c \alpha^{l-i} e^{2f(k)blk} + \alpha^{l-i+1}\lambda_{(i-1)k}(\ud z)
	$$
	Adding all equations as above and cancelling the respective terms, we have that: 
	\begin{equation}\label{Eq.4}
\lambda_{lk}(\ud z)\leq c \sum_{i=2}^{l} \alpha^{l-i} e^{2f(k)blk} + \alpha^{l-1}\lambda_{k}(\ud z) \leq e^{2f(k)bkl} + \alpha^{l-1}\lambda_{k}(\ud z).
\end{equation}

Take  $l=\omega_k$ above any sequence for $k\geq k_0$. Dividing by $e^{2f(k)bk\omega_k}$  and using  that $\lambda_k\rightarrow 1$,   $\log(1+x) \approx x$ for $x$ small enough we have that  for $k$ big enough that

\begin{equation}\label{eq.2}
\log \lambda_{\omega_k k} \leq \frac{1}{\alpha}(\frac{\alpha}{e^{2bf_{\ud z}(k)k}})^{\omega_k}\lambda_k+2bf_{\ud z}(k)\omega_k k \leq 
\frac{2}{\alpha}\alpha^{\omega_k} + 2bf_{\ud z}(k)\omega_k k. \end{equation}

Giving $n\in \mathbb{N}$ big enough and $\gamma>0$,  define  $\beta=1-\gamma$ and consider $k_n=[n^\gamma]$ and $\omega_n=[n^\beta]$. Thus, for every $n$ big than some $n_0$, we have that $w_nk_n\leq n$ and $\log \lambda_{\omega_n k_n} \geq \log \lambda_n$. By  Equation \eqref{eq.2}  and \eqref{eq.0} we  have that for almost every $z$ there exist $n_0(z)$ such that for  every $n>n_0(z)$:
\begin{equation}\label{eq.3}
 \log \lambda_n \leq \log \lambda_{\omega_n k_n} \leq 2 \alpha^{n^{1-\gamma}} +  4bf_{\ud z}([n^\gamma])n, 
\end{equation} as we wish to show.

\vspace{1cm}
 	Giovane Ferreira\\
 	Universidade Federal do Maranhão\\
 	Campus do Bacanga, S\~ao Lu\' is, Maranh\~ao
 	Brazil\\
 giovane.ferreira@ufma.com\\
 
 	Krerley Oliveira\\
 	Universidade Federal de Alagoas\\
 	Campus A.C. Sim\~oes, Macei\'o, Alagoas
 	Brazil\\
  krerley@gmail.com\\
\end{document}